\newtheorem{theorem}{Theorem}[section]
\newtheorem{lemma}[theorem]{Lemma}
\newtheorem{proposition}[theorem]{Proposition}
\newtheorem{remark}[theorem]{Remark}
\newcommand{\dx}{\,\mathrm{d}x}
\newcommand{\dy}{\,\mathrm{d}y}
\begin{document}
	
	\title[Hartree-Fock systems with null mass with logarithmic interactions]{A class of Hartree-Fock systems with null mass via  Nehari-Pohozaev with logarithmic interactions}

	\author[J. C. de Albuquerque]{J. C. de Albuquerque}
	\thanks{The first author was partially supported by CNPq with grants 301849/2025-0.}
	%\thanks{\colorbox{cyan}{falta preencher aqui}text}
	\address{Departamento de Matem\'atica, Universidade Federal de Pernambuco \\
		50670-901, Recife--PE, Brazil}
	\email{josecarlos.melojunior@ufpe.br}
	
	\author[J. Carvalho]{J. Carvalho}
	\thanks{The second author was partially supported by CNPq with grant 300853/2025-4.}
	
	\address{Departamento de Matemática, Universidade Federal de Sergipe\\
		49100-000, São-Cristóvão--SE, Brazil}
	\email{jonison@mat.ufs.br}

	\author[E. D. Silva]{E. D. Silva*}
	\thanks{The third author was also partially supported by CNPq with grant 310440/2025-4.}
	\address{Instituto de Matemática e Estatística,
		Universidade Federal de Goiás
		74001-970, Goiânia-GO, Brazil}
	\email{edcarlos@ufg.br}
	\thanks{Corresponding author: E. D. Silva email: edcarlos@ufg.br}
	
	\keywords{Hartree-Fock type system, Logarithmic kernel, Nehari-Pohozaev.}
	
	\subjclass[2010]{35J66, 35J20, 35J60, 35B33}
	
	\begin{abstract} We establish the existence and qualitative properties of nontrivial solutions for a class of Hartree-Fock type systems defined over the whole space $\mathbb{R}^2$. By introducing a suitable Nehari-Pohozaev manifold, we prove the existence, regularity and we describe the asymptotic behavior of solutions with respect to the interaction parameter $\beta > 0$. In particular, we show that the system admits either a vector ground state or a semitrivial ground state solution, depending on the magnitude of $\beta$.	
	\end{abstract}

	\maketitle
	
\tableofcontents
	
	\section{Introduction and main results}
	
	The Hartree-Fock type system has attracted considerable attention in theoretical research over the years. This class of systems appears in the study of standing wave solutions for an approximation of the Hartree model with a two-component attractive interaction, and its form is given by
	\begin{equation}\label{sistema-fisico}
		\left\{
		\begin{aligned}
			-i \varphi_t-\Delta_x \varphi+\varphi+  \phi\varphi&=|\varphi|^{2p-2}\varphi+\beta |\psi|^{p}|\varphi|^{p-2}\varphi , & \mbox{in} \ \mathbb{R}^N\times(0,\infty),
			\\
			-i \psi_t -\Delta_x \psi + \psi +  \phi \psi&=|\psi|^{2p-2}\psi+\beta|\varphi|^p|\psi|^{p-2}\psi, & \mbox{in} \ \mathbb{R}^N\times(0,\infty),
		\end{aligned}
		\right.
	\end{equation}
	with $N\geq2$, $\beta \geq 0$, and $1<p<\infty$. The pair $(\varphi,\psi)$, with $\varphi,\psi:\mathbb{R}^N \to \mathbb{C}$, represent a standing wave solution. The System \eqref{sistema-fisico} is motivated by an approximation given in Hartree-Fock method which is considered in orbital theory (see \cite{fock,hartee,slater}). See also \cite{beru,catto,lieb2,mar}. 
	
	Inspired by \eqref{sistema-fisico}, we are interested in standing wave solutions of the form $(\varphi,\psi ) = (e^{it}u, e^{it}v)$, with $u,v:  \mathbb{R}^N \to \mathbb{R}$. In this case, $(u,v)$ is a solution of 
	\begin{equation}\label{sistema-massa-positiva}
		\left\{
		\begin{aligned}
			-\Delta u + V(x)u + \phi u& =  |u|^{2p-2}u + \beta |v|^{p} |u|^{p-2}u, & \mbox{in} \ \mathbb{R}^N,
			\\
			-\Delta v + V(x)v + \phi v& = |v|^{2p-2}v + \beta |u|^{p}|v|^{p-2}v, & \mbox{in} \ \mathbb{R}^N,
		\end{aligned}
		\right.
	\end{equation}
    with $V \equiv1$. The term $\phi$ is obtained by the convolution of $\gamma\left(u^2+v^2\right)$ (for some $\gamma \in \mathbb{R}$) with the fundamental solution of the Laplacian $\Gamma:\mathbb{R}^N \setminus\{0\} \to \mathbb{R}$ given by
	$$
	\Gamma(x)=\frac{1}{2\pi}\log|x|, \ \ \mbox{ if} \ \ N=2 \qquad \mbox{and} \qquad
	\Gamma(x)=\frac{c_N}{|x|^{N-2}}, \ \ \mbox{ if} \ \ N \geq 3,
	$$
	where $c_N=1/N(N-2)\omega_{N}$ ($\omega_{N}$ is the volume of the unit in $\mathbb{R}^N$). More precisely,
	$$
	\phi(x)=\phi_{u,v}(x):=\left[\Gamma \ast \left(\gamma\left(u^2+v^2\right)\right)\right](x)=\gamma\int_{\mathbb{R}^N}\Gamma(|x-y|)\left(u^2(y)+v^2(y)\right)\dy.
	$$
	In this case, the function $\phi_{u,v}$ (Newtonian potential) is the natural candidate for solving the following Poisson's equation
	\begin{equation}\label{eq-Poisson}
		-\Delta\phi=\gamma\left(u^2+v^2\right), \quad \mbox{in} \ \mathbb{R}^N.
	\end{equation}

	The system \eqref{sistema-massa-positiva}, with $N\geq 3$, has been intensively studied in the last years, see \cite{hf1,lieb3,lions,Zhu-Zushun}. Considering  $v=0$ in \eqref{sistema-massa-positiva}, notice that this system has a slight relation with the  equation 
	\begin{equation}\label{caso-escalar}
		-\Delta u+V(x)u+\phi_{u,0} u=|u|^{2p-2}u, \quad  \mbox{in} \ \mathbb{R}^N.
	\end{equation}
	In dimension three, variations of this problem have been studied in \cite{hf1,benci1, benci2,CerVai,Ruiz}, and many others. The planar case is more delicate due to the sign-changing nature of the $\log|x|$ kernel, in contrast to the strictly positive behavior of $1/|x|^{N-2}$ in higher dimensions. Moreover, when $N\geq 3$, we can  use the Lax-Milgram Theorem in the space $D^{1,2}(\mathbb{R}^N)$ to  solve the equation \eqref{eq-Poisson}. This kind of approach can not be applied for the case $N =2$ due to the loss of continuous embedding of $D^{1,2}(\mathbb{R}^2)$ into the Lebesgue spaces. In this direction, we can cite \cite{Cingolani-Weth,Stubbe,Cingolani-Jeanjean}.
	
	Depending on the behavior of $V$, for example, such as being bounded away from zero and sufficiently large in an appropriate sense, or being periodic, one can establish existence and multiplicity of solutions for problems like \eqref{caso-escalar}, using, for instance, the Mountain Pass Theorem or the Nehari method. On this subject we refer the reader to \cite{alves,ambro,azzo,bon,cv2023,Cingolani-Weth}.

	The case $V\equiv0$ where is called \textit{problem with zero mass (or null mass)}. For the case $N \geq 3$, we can cite \cite{claudianor-jianfu,Gu-Liao,Zhang-Dongdong-Siti-Wu}.
    
    In the planar case, we could
	cite \cite{Chen-Shu-Tang-Wen,Wen-Chen-Radulescu,Yang,Wei-Tang-Zhang}. In these works the authors use functions with symmetry, called \textit{axially symmetric functions} ($u(x,y)=u(|x|,|y|)$). For further results for semilinear elliptic problems considering the zero mass case we also refer the reader to \cite{nodea2025,du,tang,wang}.

	Our goal in this work is to study the system \eqref{sistema-massa-positiva} in dimension two, focusing on the case $V \equiv 0$. The case $V \equiv 1$ was previously investigated in \cite{Carvalho-Figueiredo-Maia-Medeiros}, under the restriction to radially symmetric functions. A key contribution of our work is that we do not impose any symmetry assumptions, such as axial or radial symmetry, on the function space. More precisely, in the present work, we consider the following class of planar elliptic systems:
	\begin{equation}\tag{$\mathcal{S}_{\beta}$}\label{S}
		\left\{
		\begin{aligned}
			-\Delta u+\phi_{u,v} u&=|u|^{2p-2}u+\beta|v|^p|u|^{p-2}u, & \mbox{in} \ \mathbb{R}^2,
			\\
			-\Delta v+\phi_{u,v} v&=|v|^{2p-2}v+\beta|u|^p|v|^{p-2}v, & \mbox{in} \ \mathbb{R}^2,
		\end{aligned}
		\right.
	\end{equation}	
	where $2\leq p<\infty$ and $\beta\geq0$. By taking $\gamma=2\pi$, we have that
	\begin{equation}\label{potential-new}
		\phi_{u,v}(x):=\int_{\mathbb{R}^2}\log(|x-y|)\left(u^2(y)+v^2(y)\right)\dy.
	\end{equation}
	Due to the presence of the term $\phi_{u,v}$, inspired by \cite{Stubbe}, the appropriate workspace for the system given above can be written in the following form: 
	$$
	W^\lambda:=\left\lbrace u\in H^1(\mathbb{R}^2) : \|u\|_{\lambda,*}^2:=\int_{\mathbb{R}^2}\log(\lambda+|x|)u^2\dx<\infty \right\rbrace,
	$$
	where the norm is defined by 
	$$
	\|u\|_{W^\lambda}:=\left(\|\nabla u\|_{2}^2 + \|u\|_{2}^2+\|u\|_{\lambda,*}^2\right)^{1/2},
	$$
	for $\lambda>0$. The energy functional $I_\beta: W^\lambda \times W^\lambda \to \mathbb{R}$ associated to this system is given by
	$$
	I_{\beta}(u,v)=\frac{1}{2}\left(\|\nabla u\|_{2}^2+\|\nabla v\|_{2}^2\right) + \frac{1}{4}\mathcal{V}(u,v)-\frac{1}{2p}\psi_\beta(u,v),
	$$
	where
	$$
	\mathcal{V}(u,v):=\int_{\mathbb{R}^2}\int_{\mathbb{R}^2}\log(|x-y|)\left(u^2(y)+v^2(y)\right)\left(u^2(x)+v^2(x)\right)\dy\dx
	$$
	and
	\begin{equation}\label{psi}
		\psi_\beta(u,v):=\|u\|_{2p}^{2p}+\|v\|_{2p}^{2p}+2\beta\int_{\mathbb{R}^2}|uv|^p\dx.
	\end{equation}
	Here, we denote the norm of Lebesgue space $L^s(\mathbb{R}^2)$ by $\|\cdot\|_s$, with $1\leq s<\infty$. 
	
	In order to obtain least energy solution it is common to use the Nehari manifold
	$$
	\mathcal{N}_\beta:=\left\lbrace  (u,v) \in \left(W^\lambda \times W^\lambda\right) \setminus \{(0,0)\} : I_\beta'(u,v)(u,v)=0 \right\rbrace,
	$$
	where
	\begin{equation}\label{derivada-Nehari}
		I_\beta'(u,v)(u,v)=\|\nabla u\|_{2}^2+\|\nabla v\|_{2}^2+\mathcal{V}(u,v)-\psi_\beta(u,v).
	\end{equation}
	In general, the fiber map $t \mapsto I_\beta(t u,tv)$ is used in this context. However, this strategy does not work in the case of zero mass, since the term $\|\cdot\|_2$ does not appear in the energy functional, which is part of the norm in the space $H^1(\mathbb{R}^2)$. Here, we need to apply some minimization arguments for a Nehari-Pohozaev manifold taking into account the fibers $t \mapsto I_\beta(t^2u_t,t^2v_t)$, where $u_t(x):=u(tx)$ and $v_t(x):=v(tx)$. It can be obtained in a non-rigorous way by the following relation
	$$
	\frac{\mathrm{d}}{\mathrm{d}t}\left(I_{\beta}\left(t^2u_t,t^2v_t\right)\right)\Big|_{t=1} = 0.
	$$
	More precisely, the Nehari-Pohozev manifold is defined by
	$$
	\mathcal{M}_{\beta}:=\left\lbrace (u,v)\in \left(W^\lambda\times W^\lambda\right)\setminus\{(0,0)\} : J_{\beta}(u,v)=0 \right\rbrace, 
	$$
	where
	\begin{equation}\label{Juv}
		\begin{aligned}
			J_{\beta}(u,v)&:=2\left(\|\nabla u\|_{2}^2+\|\nabla v\|_{2}^2\right) - \frac{1}{4}\left(\|u\|_{2}^2+ \|v\|_{2}^2\right)^2
			+ \mathcal{V}(u,v) - \frac{(4p-2)}{2p}\psi_\beta(u,v).
		\end{aligned}
	\end{equation}
	Unlike problems involving the standard Nehari manifold $\mathcal{N}_\beta$, it is not immediate that all critical points of $I_\beta$ lie on $\mathcal{M}_\beta$. This makes the application of the Nehari-Pohozaev approach more challenging when aiming to guarantee the existence of a least energy solution. To overcome this difficulty, we will first need to rigorously establish the following Pohozaev identity
	$$
	\mathcal{P}_\beta(u,v):=\mathcal{V}(u,v)+\frac{1}{4}\left(\|u\|_2^2+\|v\|_2^2\right)^2-\frac{1}{p}\psi_\beta(u,v)=0,
	$$
	and then verify that
    \begin{equation*}
        J_\beta(u,v)=2I_\beta'(u,v)(u,v)-\mathcal{P}_\beta(u,v).
    \end{equation*}

	A pair $(u,v) \in W^\lambda\times W^\lambda$ is said to be a weak solution for System \eqref{S}, whenever 
	$$
	\int_{\mathbb{R}^2} \nabla u\nabla\varphi \dx +  \int_{\mathbb{R}^2}\phi_{u,v}u\varphi\dx = \int_{\mathbb{R}^2}\left(|u|^{2p-2}u+\beta|v|^p|u|^{p-2}u\right)\varphi\dx
	$$
	and
	$$
	\int_{\mathbb{R}^2} \nabla v\nabla\psi \dx +  \int_{\mathbb{R}^2}\phi_{u,v}v\psi\dx = \int_{\mathbb{R}^2}\left(|v|^{2p-2}v+\beta|u|^p|v|^{p-2}v\right)\psi\dx
	$$
	for any $\varphi,\psi\in C_0^\infty(\mathbb{R}^2)$. Under these conditions, it follows that $(u,v) \in W^\lambda \times W^\lambda$ is a critical point for $I_\beta$ if, and only if, $(u,v)$ is a weak solution for the System \eqref{S}. We refer to a solution $(u,v)$ as a semi-trivial solution when either $u = 0$ or $v = 0$, and as a vectorial solution when both components are nontrivial, that is, $u,v \neq 0$. A solution $(u,v)$ is said to be nonnegative if $u, v \geq 0$. Furthermore, a pair $(u,v) \in W^\lambda \times W^\lambda$ is called a least energy solution (or ground state solution) to System \eqref{S} if $I_\beta(u,v) \leq I_\beta(z, w)$ for all nontrivial weak solutions $(z, w) \in W^\lambda \times W^\lambda$.
	
		Our first main result can be stated as follows:
		
		\begin{theorem}[Existence and asymptotic behavior of solutions]\label{existencia-de-solucao}
			Assume that $2\leq p<\infty$ and $\beta\geq0$. Then, System \eqref{S} admits a nonnegative weak solution $(u_\beta^\lambda,v_\beta^\lambda) \in  \mathcal{M}_\beta$ such that
			$$
			I_\beta(u_\beta^\lambda,v_\beta^\lambda)=c_{\beta}:=\inf_{(u,v)\in\mathcal{M}_{\beta}}I_{\beta}(u,v),
			$$
			for each  $\lambda > e^{1/4}$.
		Furthermore,
		\begin{itemize}
			
	\item [$(a)$] for $\lambda > e^{1/4}$ fixed, $u_\beta^\lambda\rightharpoonup u^\lambda$ and $v_\beta^\lambda\rightharpoonup v^\lambda$ weakly in $W^\lambda$, as $\beta\to0^+$, such that
	$$
	I_0(u^\lambda,v^\lambda)=c_0;
	$$
	
	\item[$(b)$] for $\lambda > e^{1/4}$ fixed, we obtain that  $\|u_\beta^\lambda\|_2 \to 0$ and $\|v_\beta^\lambda\|_2 \to 0$, as $\beta\to\infty$. 
	\end{itemize}
\end{theorem}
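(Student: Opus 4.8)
The plan is to realize $c_\beta$ as the minimax value $\inf_{(u,v)\neq(0,0)}\max_{t>0}I_\beta(t^2u_t,t^2v_t)$ and to minimize $I_\beta$ over $\mathcal M_\beta$. First I would record the preliminary facts the whole argument rests on: that $\phi_{u,v}$ is well defined on $W^\lambda\times W^\lambda$; that $\mathcal V$ splits as $\mathcal V=\mathcal V_1-\mathcal V_2$, with $\mathcal V_1\ge0$ carrying the kernel $\log(1+|x-y|)$ and $\mathcal V_2$ carrying the kernel $\log(1+|x-y|^{-1})$, the latter dominated via Hardy--Littlewood--Sobolev by $\|u\|_{2r}^4+\|v\|_{2r}^4$ for some $r>1$ close to $1$; that the embedding $W^\lambda\hookrightarrow L^q(\mathbb R^2)$ is compact for every $q\in[2,\infty)$, which follows for $q>2$ from interpolation between $L^2$ and a high $L^{q'}$ norm together with the tightness bound $\int_{|x|>R}u^2\le(\log(\lambda+R))^{-1}\|u\|_{\lambda,*}^2$, and for $q=2$ from the same tightness combined with Rellich on balls; and the rigorous Pohozaev identity $\mathcal P_\beta(u,v)=0$ for weak solutions, together with the algebraic relation $J_\beta=2I_\beta'(u,v)(u,v)-\mathcal P_\beta(u,v)$. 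A scaling computation then gives $\tfrac{d}{dt}I_\beta(t^2u_t,t^2v_t)=t^3h(t)$ with $h(t)=2(\|\nabla u\|_2^2+\|\nabla v\|_2^2)+\mathcal V(u,v)-(\log t+\tfrac14)(\|u\|_2^2+\|v\|_2^2)^2-\tfrac{4p-2}{2p}t^{4p-6}\psi_\beta(u,v)$; since $4p-6>0$ for $p\ge2$, $h$ decreases strictly from $+\infty$ to $-\infty$, so each $(u,v)\neq(0,0)$ has a unique projection onto $\mathcal M_\beta$ along its fiber, attained at the fiber's unique maximum, whence $\mathcal M_\beta\neq\emptyset$ and the minimax characterization of $c_\beta$ holds. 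Using $J_\beta=0$ to eliminate $\psi_\beta$ yields, for $(u,v)\in\mathcal M_\beta$, $I_\beta(u,v)=\tfrac{4p-6}{2(4p-2)}(\|\nabla u\|_2^2+\|\nabla v\|_2^2)+\tfrac1{4(4p-2)}(\|u\|_2^2+\|v\|_2^2)^2+\tfrac{4p-6}{4(4p-2)}\mathcal V(u,v)$; estimating $\mathcal V\ge-\mathcal V_2$ and absorbing $\mathcal V_2$ by Gagliardo--Nirenberg and Young (where the hypothesis $\lambda>e^{1/4}$ is used to keep the relevant constants positive) proves $c_\beta>0$ and that $I_\beta$ is coercive on $\mathcal M_\beta$, with constants uniform in $\beta$.

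Next I would extract a minimizing sequence $(u_n,v_n)\subset\mathcal M_\beta$ (which, via Ekeland's principle on the $C^1$ manifold $\mathcal M_\beta$, may be taken Palais--Smale for $I_\beta$ constrained to $\mathcal M_\beta$; a Lagrange-multiplier computation whose multiplier vanishes because $\tfrac{d}{dt}J_\beta(t^2u_t,t^2v_t)|_{t=1}=h'(1)<0$ while $\tfrac{d}{dt}I_\beta(t^2u_t,t^2v_t)|_{t=1}=J_\beta(u,v)=0$ on $\mathcal M_\beta$ — combined with $J_\beta=2I_\beta'(\cdot)(\cdot)-\mathcal P_\beta$ and the Pohozaev identity — turns it into a Palais--Smale sequence for $I_\beta$ on all of $W^\lambda\times W^\lambda$). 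By coercivity it is bounded, so $u_n\rightharpoonup u_\beta^\lambda$, $v_n\rightharpoonup v_\beta^\lambda$, strongly in every $L^q$ with $q\in[2,\infty)$. The limit is nontrivial: otherwise $\psi_\beta(u_n,v_n)\to0$, $\mathcal V_2(u_n,v_n)\to0$ and $(\|u_n\|_2^2+\|v_n\|_2^2)^2\to0$, so $J_\beta(u_n,v_n)=0$ forces $\|\nabla u_n\|_2^2+\|\nabla v_n\|_2^2\to0$ and $\mathcal V_1(u_n,v_n)\to0$, hence $I_\beta(u_n,v_n)\to0<c_\beta$, a contradiction. Testing $I_\beta'(u_n,v_n)(u_n-u_\beta^\lambda,v_n-v_\beta^\lambda)\to0$ and using the strong $L^{2p}$ convergence (and the continuity properties of $\phi_{u,v}$) gives strong convergence of the gradients, so $(u_\beta^\lambda,v_\beta^\lambda)\in\mathcal M_\beta$ is a nontrivial critical point of $I_\beta$ with $I_\beta(u_\beta^\lambda,v_\beta^\lambda)=c_\beta$. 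Replacing it by $(|u_\beta^\lambda|,|v_\beta^\lambda|)$ — legitimate since $I_\beta$ and $J_\beta$ depend on the unknowns only through $u^2,v^2,|u|^{2p},|v|^{2p},|uv|^p$ and $|\nabla|u||=|\nabla u|$ a.e. — yields the desired nonnegative weak solution of \eqref{S}.

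For the asymptotics, since $\psi$ is nondecreasing in $\beta$ the map $\beta\mapsto c_\beta$ is nonincreasing and $c_\beta\le c_0$, so $\ell:=\lim_{\beta\to0^+}c_\beta$ exists with $0<\ell\le c_0$. The minimizers are bounded, hence $u_\beta^\lambda\rightharpoonup u^\lambda$, $v_\beta^\lambda\rightharpoonup v^\lambda$, with the limit nontrivial as above; since $J_0(u_\beta^\lambda,v_\beta^\lambda)=\tfrac{4p-2}{p}\beta\int|u_\beta^\lambda v_\beta^\lambda|^p\to0$, the $\mathcal M_0$-projections $\tau_\beta$ of $(u_\beta^\lambda,v_\beta^\lambda)$ satisfy $\tau_\beta\to1$, giving $c_0\le I_0(\tau_\beta^2(u_\beta^\lambda)_{\tau_\beta},\tau_\beta^2(v_\beta^\lambda)_{\tau_\beta})=I_0(u_\beta^\lambda,v_\beta^\lambda)+o(1)=c_\beta+o(1)$, so $\ell=c_0$. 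Testing the $\beta$-system against $(u_\beta^\lambda-u^\lambda,v_\beta^\lambda-v^\lambda)$, where the coupling term is killed by $\beta\to0$ together with the strong $L^{2p}$ convergence, upgrades the gradients to strong convergence; combined with continuity of the remaining terms this forces $J_0(u^\lambda,v^\lambda)=0$ and $I_0(u^\lambda,v^\lambda)=c_0$, which is $(a)$. For $(b)$, projecting a fixed pair $(u,u)$ with $u\neq0$ onto $\mathcal M_\beta$ produces projections $t_\beta\to0$ as $\beta\to\infty$ with $I_\beta(t_\beta^2u_{t_\beta},t_\beta^2u_{t_\beta})\to0$, whence $c_\beta\to0$; the quantitative form of the coercivity of $I_\beta$ on $\mathcal M_\beta$ (uniform in $\beta$) then bounds $\|u_\beta^\lambda\|_2^2+\|v_\beta^\lambda\|_2^2$ by a quantity vanishing with $c_\beta$, giving $(b)$.

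The two steps I expect to be most delicate are the coercivity and strict positivity of $c_\beta$ on $\mathcal M_\beta$ — because the kernel $\log|x-y|$ changes sign, the negative contribution $\mathcal V_2$ can only be controlled through the precise splitting of $\mathcal V$ together with Hardy--Littlewood--Sobolev, Gagliardo--Nirenberg and the restriction $\lambda>e^{1/4}$, and this is the technical heart of the argument — and, in part $(a)$, ruling out loss of mass in the logarithmically weighted part of the $W^\lambda$ norm when passing to the weak limit (so as to obtain $J_0(u^\lambda,v^\lambda)=0$ rather than merely $\le0$), since the weight $\log(\lambda+|x|)$ grows too slowly for this to follow from abstract compactness; there one must exploit the equation satisfied by the minimizers and the uniform decay coming from the regularity part of the theorem. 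The rigorous verification of the Pohozaev identity $\mathcal P_\beta(u,v)=0$ in the plane, with the slowly decaying logarithmic potential, is the remaining point requiring care.
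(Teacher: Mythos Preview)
Your proposal has two genuine gaps, both at points the paper handles differently.

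\textbf{(1) The Lagrange multiplier step does not go through.} You want to upgrade a constrained Palais--Smale sequence to an unconstrained one by showing the multiplier vanishes, testing against the fiber tangent $w=\bigl(2u+x\cdot\nabla u,\,2v+x\cdot\nabla v\bigr)$ so that $I_\beta'(u,v)[w]=J_\beta(u,v)=0$ while $J_\beta'(u,v)[w]<0$. The trouble is that $w$ need not lie in $W^\lambda\times W^\lambda$: the term $x\cdot\nabla u$ requires $|x|^2\log(\lambda+|x|)|\nabla u|^2\in L^1$ and $|x||D^2u|\in L^2$, neither of which is available a~priori. Your fallback, ``combined with $J_\beta=2I_\beta'(\cdot)(\cdot)-\mathcal P_\beta$ and the Pohozaev identity'', is circular: $\mathcal P_\beta(u,v)=0$ is only known for actual solutions, which is exactly what you are trying to prove. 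The paper flags this obstruction explicitly (see the Remark after Theorem~1.3) and bypasses it via the Quantitative Deformation Lemma: one first minimizes $I_\beta$ over $\mathcal M_\beta$ directly (no Ekeland, no multipliers), and then shows by a deformation argument that any minimizer on $\mathcal M_\beta$ must be a free critical point.

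\textbf{(2) You have not secured $W^\lambda$-boundedness of the minimizing sequence.} Your coercivity identity on $\mathcal M_\beta$ controls only $\|\nabla u\|_2$, $\|u\|_2$ and $\mathcal V_1$; but $\mathcal V_1$ is translation-invariant while $\|\cdot\|_{\lambda,*}$ is not, so a minimizing sequence can run off to infinity with $\mathcal V_1$ bounded and $\|\cdot\|_{\lambda,*}\to\infty$. Consequently the compact embedding $W^\lambda\hookrightarrow L^q$ cannot be invoked as you do, and your nontriviality argument for the weak limit collapses. The paper's proof first uses Lions' lemma to find translations $y_n\in\mathbb Z^2$ with mass concentrated in $B_R(0)$, and only \emph{then} extracts $W^\lambda$-boundedness from the bound on $\mathcal V_1$ together with the pointwise inequality $\log(\lambda+|x-y|)\ge\tfrac12\log(\lambda+|x|)$ for $|x|>2R$, $|y|<R$. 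The same translation step is implicitly needed again in your part~$(a)$. Your treatment of part~$(b)$ is essentially the paper's.
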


Now, we shall discuss some results on regularity of solutions for system \eqref{S}. Furthermore, we prove a Pohozaev identity which permits to show that $\mathcal{M}_\beta$ is a natural constraint. Namely, we can state the following result: 
	
	\begin{theorem}[Regularity and Pohozaev identity]\label{teo-Pohozaev}  Assume the conditions of Theorem~\ref{existencia-de-solucao} and let $(u,v)$ be a solution of System \eqref{S}. Then,
	
	\begin{itemize}
		\item [$(i)$] $u,v \in C^{2,\sigma}_{\mathrm{loc}}(\mathbb{R}^2)$, for some $\sigma\in(0,1)$;
		
		\item[$(ii)$] if $u \neq 0$, then $u>0$. Similarly, if $v\neq0$, we have $v>0$;
		
		\item[$(iii)$] it holds that
		\begin{equation}\label{identidade-Pohozaev}
			\mathcal{P}_\beta(u,v):=\mathcal{V}(u,v)+\frac{1}{4}\left(\|u\|_2^2+\|v\|_2^2\right)^2-\frac{1}{p}\psi_\beta(u,v)=0.
		\end{equation}
	\end{itemize}	
	\end{theorem}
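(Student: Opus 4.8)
\medskip

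\noindent\emph{Plan of proof.}

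For part~$(i)$ the plan is a bootstrap, the only non-routine point being the logarithmic potential. First I would check that $\phi_{u,v}$ is continuous and locally bounded, by splitting $\log|x-y|=\log|x-y|\,\chi_{\{|x-y|<1\}}+\log|x-y|\,\chi_{\{|x-y|\ge 1\}}$: the near-diagonal piece is the convolution of a kernel in $L^r_{\mathrm{loc}}$ (every $r<\infty$) with $u^2+v^2\in L^s(\mathbb{R}^2)$ for all $s\in[1,\infty)$ (by $H^1(\mathbb{R}^2)\hookrightarrow L^{2s}$), hence locally bounded by Young's inequality, while the far piece obeys $\log(\lambda+|x-y|)\le\log(\lambda+|x|)+\log(1+|y|)$ and is therefore controlled by $\int_{\mathbb{R}^2}\log(\lambda+|y|)\bigl(u^2+v^2\bigr)\dy<\infty$, i.e.\ by $\|u\|_{\lambda,*}^2+\|v\|_{\lambda,*}^2$. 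Consequently the right-hand sides $g_1:=-\phi_{u,v}u+|u|^{2p-2}u+\beta|v|^p|u|^{p-2}u$ and $g_2$ lie in $L^s_{\mathrm{loc}}(\mathbb{R}^2)$ for every $s<\infty$, so $u,v\in W^{2,s}_{\mathrm{loc}}(\mathbb{R}^2)$ for all $s$ by Calder\'on--Zygmund estimates, whence $u,v\in C^{1,\alpha}_{\mathrm{loc}}(\mathbb{R}^2)$ for every $\alpha\in(0,1)$ by Morrey's embedding. Now $u^2+v^2\in C^{0,\alpha}_{\mathrm{loc}}$, so Schauder estimates applied to the Poisson equation satisfied by $\phi_{u,v}$ give $\phi_{u,v}\in C^{2,\alpha}_{\mathrm{loc}}$; since $p\ge 2$, the maps $t\mapsto|t|^{2p-2}t$, $t\mapsto|t|^{p-2}t$ and $t\mapsto|t|^p$ are $C^1$, hence $g_1,g_2\in C^{0,\alpha}_{\mathrm{loc}}$, and a second application of Schauder estimates yields $u,v\in C^{2,\sigma}_{\mathrm{loc}}$ with $\sigma=\alpha$.

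For part~$(ii)$, since the solutions provided by Theorem~\ref{existencia-de-solucao} are nonnegative, we may assume $u\ge 0$ with $u\not\equiv 0$. Rewriting the first equation as
$$
-\Delta u+\phi_{u,v}^{+}\,u=\phi_{u,v}^{-}\,u+|u|^{2p-2}u+\beta|v|^p|u|^{p-2}u\ \ge\ 0\qquad\text{in }\mathbb{R}^2,
$$
with $\phi_{u,v}^{+}\in L^\infty_{\mathrm{loc}}(\mathbb{R}^2)$ by part~$(i)$, the strong maximum principle applies; since $\mathbb{R}^2$ is connected and $u\not\equiv 0$, it forces $u>0$ everywhere. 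The same argument gives $v>0$ whenever $v\not\equiv 0$.

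For part~$(iii)$ the plan is a Rellich--Pohozaev computation with the multipliers $x\cdot\nabla u$ and $x\cdot\nabla v$, made rigorous through part~$(i)$, a decay estimate, and a cutoff. First I would prove that $u,v$ decay fast at infinity: when $(u,v)\not\equiv(0,0)$ one has $\phi_{u,v}(x)\ge c\log|x|-C\to+\infty$, and comparison with an explicit supersolution then shows $u,v$ decay faster than any power of $|x|$, so that $\int_{\mathbb{R}^2}|x|^{k}(u^2+v^2)\dx<\infty$ for every $k$ (which makes all the integrals below absolutely convergent). Next, using that $u,v$ are classical solutions by part~$(i)$ and taking a cutoff $\eta_R(x)=\eta(x/R)$ with $\eta\equiv 1$ on $B_1$ and $\operatorname{supp}\eta\subset B_2$, I would multiply the two equations by $\eta_R^2\,(x\cdot\nabla u)$ and $\eta_R^2\,(x\cdot\nabla v)$, integrate, add, and let $R\to\infty$. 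The Laplacian contributions produce the bulk term $\tfrac{N-2}{2}\int_{\mathbb{R}^2}(|\nabla u|^2+|\nabla v|^2)$, which vanishes since $N=2$, plus cutoff errors supported in $B_{2R}\setminus B_R$ that tend to $0$ because $\nabla u,\nabla v\in L^2(\mathbb{R}^2)$. The local nonlinear contributions reassemble, after an integration by parts, into $-\tfrac{N}{2p}\psi_\beta(u,v)=-\tfrac1p\psi_\beta(u,v)$ (the cutoff errors being controlled by $|u|^{2p},|v|^{2p},|uv|^{p}\in L^1$). For the nonlocal term,
$$
\int_{\mathbb{R}^2}\phi_{u,v}\,\eta_R^2\,(u\,x\cdot\nabla u+v\,x\cdot\nabla v)\dx=\tfrac12\int_{\mathbb{R}^2}\phi_{u,v}\,\eta_R^2\,x\cdot\nabla\bigl(u^2+v^2\bigr)\dx,
$$
and integrating by parts and using $\int_{\mathbb{R}^2}\phi_{u,v}(u^2+v^2)\dx=\mathcal{V}(u,v)$, the identity $\nabla\phi_{u,v}(x)=\int_{\mathbb{R}^2}\tfrac{x-y}{|x-y|^2}(u^2+v^2)(y)\dy$ and the symmetrization $\tfrac12\bigl(\tfrac{x\cdot(x-y)}{|x-y|^2}+\tfrac{y\cdot(y-x)}{|x-y|^2}\bigr)=\tfrac12$, one obtains in the limit $-\mathcal{V}(u,v)-\tfrac14\bigl(\|u\|_2^2+\|v\|_2^2\bigr)^2$. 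Collecting all the limits then gives
$$
-\mathcal{V}(u,v)-\tfrac14\bigl(\|u\|_2^2+\|v\|_2^2\bigr)^2=-\tfrac1p\,\psi_\beta(u,v),
$$
which is exactly \eqref{identidade-Pohozaev}.

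I expect the main obstacle to be the rigorous treatment of the nonlocal term together with the decay estimate: the kernel $\log|x-y|$ is sign-changing and unbounded, and $\tfrac{x-y}{|x-y|^2}$ (which enters $\nabla\phi_{u,v}$) is only locally integrable and grows linearly, so both the use of Fubini in the symmetrization and the vanishing of the tail terms rely on first having sufficiently fast decay of $u$ and $v$; here the finiteness of $\|\cdot\|_{\lambda,*}$ and the coercivity of $\phi_{u,v}$ at infinity have to be used carefully. A formally equivalent route is to substitute the dilations $u(\cdot/t),v(\cdot/t)$ into $I_\beta$: the scaling identities for $\|\nabla\cdot\|_2$, $\|\cdot\|_2$, $\mathcal{V}$ and $\psi_\beta$ make $g(t):=I_\beta\bigl(u(\cdot/t),v(\cdot/t)\bigr)$ an explicit smooth function of $t>0$ with $g'(1)=\mathcal{P}_\beta(u,v)$, and one then deduces $g'(1)=0$ from $I_\beta'(u,v)=0$; but justifying the chain rule still requires controlling the tangent field $(x\cdot\nabla u,x\cdot\nabla v)$ in $W^\lambda\times W^\lambda$, i.e.\ the same decay estimates.
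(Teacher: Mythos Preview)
Your plan for parts $(i)$ and $(ii)$ matches the paper's: a bootstrap through $W^{2,s}_{\mathrm{loc}}$ followed by Schauder, and then the strong maximum principle applied after absorbing the locally bounded part of $\phi_{u,v}$ into the zero-order coefficient. The only cosmetic difference is that the paper first invokes \cite[Theorem~9.9]{Gilbarg-Trudinger} to obtain $\phi_{u,v}\in W^{2,q}_{\mathrm{loc}}$ directly from the Poisson equation $-\Delta\phi_{u,v}=2\pi(u^2+v^2)$, rather than splitting the logarithmic kernel as you do.

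For part $(iii)$ your route diverges from the paper's in a meaningful way. The paper also tests against $x\cdot\nabla u$ and $x\cdot\nabla v$ and performs the same symmetrization $\tfrac12\bigl(\tfrac{x\cdot(x-y)}{|x-y|^2}+\tfrac{y\cdot(y-x)}{|x-y|^2}\bigr)=\tfrac12$, but it never proves (or uses) any pointwise decay of $u,v$. Instead it integrates the divergence identities over $B_R(0)$ with the Divergence Theorem and then disposes of the boundary terms via the elementary observation (\cite[Lemma~2.1]{Severo-Gloss-Edcarlos}) that for any $\mathfrak{F}\in C(\mathbb{R}^2)\cap L^1(\mathbb{R}^2)$ there is a sequence $R_n\to\infty$ with $R_n\int_{\partial B_{R_n}}|\mathfrak{F}|\,d\sigma\to 0$. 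Since every boundary integrand that arises ($|\nabla u|^2$, $|u|^{2p}$, $|uv|^p$, $\phi_{u,v}u^2$, \dots) is continuous by part $(i)$ and globally integrable by the $W^\lambda$-framework, this suffices. Your cutoff-plus-decay strategy would also work and has the advantage of yielding the decay of $u,v$ as a byproduct, but it is considerably heavier: establishing $u(x)\to 0$ as $|x|\to\infty$ (needed before any comparison argument can be launched, and not automatic from $H^1(\mathbb{R}^2)$) and then building the supersolution are genuine additional steps that the paper's sequential-radii trick bypasses entirely.
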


	\begin{remark}[Ground state solution]
		From \eqref{derivada-Nehari}, \eqref{Juv}, and \eqref{identidade-Pohozaev} we can conclude that
		$$
		J_\beta(u,v)=2I_\beta'(u,v)(u,v)-\mathcal{P}_\beta(u,v).
		$$
		In view of Theorem~\ref{teo-Pohozaev}, if $(z,w)$ is a weak solution of System \eqref{S}, then $(z,w) \in \mathcal{M}_\beta$. Therefore, the solution   that achieve the level $c_\beta$ is a ground state solution.
	\end{remark}

	Now, we are able to consider the existence of semitrivial solutions of System \eqref{S} which are given by the vectors $(u,0)$ or $(0,v)$, where $u, v$ are nontrivial weak solution to the following scalar problems, respectively: 
		\begin{equation}\tag{$\mathcal{P}_{1}$}\label{P1}
		\begin{aligned}
			-\Delta u+\phi_{u} u&=|u|^{2p-2}u, & \mbox{in} \ \mathbb{R}^2,
		\end{aligned}
	\end{equation}	
	and 
		\begin{equation}\tag{$\mathcal{P}_{2}$}\label{P2}
		\begin{aligned}
			-\Delta v+\phi_{v} v&=|v|^{2p-2}v, & \mbox{in} \ \mathbb{R}^2.
		\end{aligned}
\end{equation}	
Recall also that $\phi_u = \phi_{u,0}$ and $\phi_v = \phi_{0,v}$, see \eqref{potential-new}.

Another feature in the present work is to find sharp conditions on the parameter $\beta > 0$ such that System \eqref{S} admits semi-trivial solutions or vector solutions. Namely, we shall state the following result: 

\begin{theorem}[Vectorial and semi-trivial solutions]\label{vetorial-semitrivial}
	Considering the pair $(u_\beta^\lambda,v_\beta^\lambda)=(u_\beta,v_\beta)$ (with $\lambda>e^{1/4}$ fixed) obtained in Theorem~\ref{existencia-de-solucao}, it follows that:
	\begin{itemize}
		\item [$(i)$] for any $\beta>2^{p-1}-1$, the pair $(u_\beta,v_\beta)$ is a vector solution;
		
		\item [$(ii)$] for any $0\leq\beta<2^{p-1}-1$, the pair $(u_\beta,v_\beta)$ is a semi-trivial solution;
		
		\item[$(iii)$] $\beta=2^{p-1}-1$, if and only if, $(u/\sqrt{2},u/\sqrt{2})$ is a solution of \eqref{S}, where $u$ is a solution of the scalar equation given in the problem \eqref{P1}.
	\end{itemize}
\end{theorem}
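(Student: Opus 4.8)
The plan is to reduce all three statements to a comparison between the ground–state level $c_\beta$ of $(\mathcal{S}_\beta)$ and the ground–state level $m$ of the scalar problem $(\mathcal{P}_1)$, which coincides with that of $(\mathcal{P}_2)$ since the two equations are identical. Write $h_t(x):=h(tx)$ as in the definition of $\mathcal{M}_\beta$, so that $t\mapsto I_\beta(t^2u_t,t^2v_t)$ is the relevant fiber. I will use, from the construction behind Theorem~\ref{existencia-de-solucao}, that for every $(u,v)\neq(0,0)$ this fiber attains a positive maximum at a unique $t=t_{(u,v)}>0$, that the corresponding rescaling lies in $\mathcal{M}_\beta$, and hence that $c_\beta=\inf_{(u,v)\neq(0,0)}\sup_{t>0}I_\beta(t^2u_t,t^2v_t)$ while $I_\beta(u,v)=\sup_{t>0}I_\beta(t^2u_t,t^2v_t)$ whenever $(u,v)\in\mathcal{M}_\beta$. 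The scalar functional $I_1$, constraint $J_1$ and manifold $\mathcal{M}_1$ attached to $(\mathcal{P}_1)$ have the same structure, and the scalar analogue of Theorem~\ref{existencia-de-solucao} provides a positive ground state $u_0\in\mathcal{M}_1$ with $I_1(u_0)=m:=\inf_{\mathcal{M}_1}I_1>0$. Since $\phi_{u,0}=\phi_u$, one checks directly that $I_\beta(u,0)=I_1(u)$ and $J_\beta(u,0)=J_1(u)$ for every $u$; consequently every semi–trivial element of $\mathcal{M}_\beta$ has energy $\geq m$, and testing $c_\beta$ with the rescaling of $(u_0,0)$ gives $c_\beta\leq m$ for all $\beta\geq0$. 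Everything then hinges on the identity $\psi_\beta\big(\tfrac{u_0}{\sqrt2},\tfrac{u_0}{\sqrt2}\big)=\tfrac{1+\beta}{2^{p-1}}\|u_0\|_{2p}^{2p}$ and on a sharp pointwise inequality, both of which single out the threshold $\beta=2^{p-1}-1$.

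For $(iii)$ I would substitute $(u/\sqrt2,u/\sqrt2)$ into $(\mathcal{S}_\beta)$: since the kernel term only sees $u^2/2+u^2/2=u^2$, we have $\phi_{u/\sqrt2,u/\sqrt2}=\phi_u$, and using that $u$ solves $(\mathcal{P}_1)$ both equations reduce to $\tfrac{1}{\sqrt2}|u|^{2p-2}u=\tfrac{1+\beta}{2^{p-1}}\,\tfrac{1}{\sqrt2}|u|^{2p-2}u$ in $\mathbb{R}^2$, which, as $u\not\equiv0$, holds if and only if $1+\beta=2^{p-1}$. For $(i)$, with $\beta>2^{p-1}-1$, I would evaluate $I_\beta$ along the diagonal fiber; using $\phi_{u_0/\sqrt2,u_0/\sqrt2}=\phi_{u_0}$, $\mathcal{V}(u_0/\sqrt2,u_0/\sqrt2)=\mathcal{V}(u_0,0)$, $\|u_0/\sqrt2\|_2^2+\|u_0/\sqrt2\|_2^2=\|u_0\|_2^2$ and the $\psi_\beta$ identity above, a direct computation gives, for every $t>0$,
\[
I_\beta\big(t^2(u_0/\sqrt2)_t,\,t^2(u_0/\sqrt2)_t\big)=I_1\big(t^2(u_0)_t\big)-\frac{t^{4p-2}}{2p}\Big(\frac{1+\beta}{2^{p-1}}-1\Big)\|u_0\|_{2p}^{2p}<I_1\big(t^2(u_0)_t\big),
\]
the bracket being positive and $\|u_0\|_{2p}>0$. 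Since $u_0\in\mathcal{M}_1$, $\sup_{t>0}I_1(t^2(u_0)_t)=I_1(u_0)=m$, while the supremum of the left side is attained at some $t^\ast>0$ where the strict inequality applies; as the corresponding rescaled diagonal pair belongs to $\mathcal{M}_\beta$, this yields $c_\beta\leq\sup_{t>0}I_\beta\big(t^2(u_0/\sqrt2)_t,t^2(u_0/\sqrt2)_t\big)<m$. Hence $(u_\beta,v_\beta)$, whose energy equals $c_\beta<m$, cannot be semi–trivial, so it is a vector solution.

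For $(ii)$, with $0\leq\beta<2^{p-1}-1$, I argue by contradiction, assuming $u_\beta,v_\beta$ are both nontrivial; by Theorem~\ref{teo-Pohozaev} they are then continuous and strictly positive, so $u_\beta v_\beta>0$ on $\mathbb{R}^2$. The key ingredient is the inequality
\[
(a^2+b^2)^p\ \geq\ a^{2p}+b^{2p}+2\beta\,(ab)^p\qquad(a,b\geq0),
\]
which holds exactly when $\beta\leq 2^{p-1}-1$ and is \emph{strict} whenever $a,b>0$ and $\beta<2^{p-1}-1$; this follows from $\min_{\tau>0}\big((1+\tau)^p-1-\tau^p\big)\tau^{-p/2}=2^p-2$, attained only at $\tau=1$ (the substitution $\tau=e^{2s}$ turns the quotient into $(2\cosh s)^p-2\cosh(ps)$). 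Setting $w:=\sqrt{u_\beta^2+v_\beta^2}\neq0$, this gives $\|w\|_{2p}^{2p}\geq\psi_\beta(u_\beta,v_\beta)$; moreover $|\nabla w|^2\leq|\nabla u_\beta|^2+|\nabla v_\beta|^2$ a.e., $\|w\|_2^2=\|u_\beta\|_2^2+\|v_\beta\|_2^2$, and $\mathcal{V}(w,0)=\mathcal{V}(u_\beta,v_\beta)$ because $w^2=u_\beta^2+v_\beta^2$. Since $h\mapsto t^2h_t$ commutes with $(u,v)\mapsto\sqrt{u^2+v^2}$, these relations give $I_1(t^2w_t)\leq I_\beta\big(t^2(u_\beta)_t,t^2(v_\beta)_t\big)$ for every $t>0$. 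Letting $\bar t>0$ be the unique parameter with $\bar t^{\,2}w_{\bar t}\in\mathcal{M}_1$, we obtain
\[
m\ \leq\ I_1\big(\bar t^{\,2}w_{\bar t}\big)\ \leq\ I_\beta\big(\bar t^{\,2}(u_\beta)_{\bar t},\,\bar t^{\,2}(v_\beta)_{\bar t}\big)\ \leq\ \sup_{t>0}I_\beta\big(t^2(u_\beta)_t,t^2(v_\beta)_t\big)=c_\beta\ \leq\ m,
\]
so all inequalities are equalities; in particular $I_1(\bar t^{\,2}w_{\bar t})=I_\beta(\bar t^{\,2}(u_\beta)_{\bar t},\bar t^{\,2}(v_\beta)_{\bar t})$. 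As the $\mathcal{V}$–contributions coincide while the gradient and (negative) nonlinear contributions to $I_1$ are respectively $\leq$ those to $I_\beta$, this equality forces $\|w\|_{2p}^{2p}=\psi_\beta(u_\beta,v_\beta)$, i.e.
\[
\int_{\mathbb{R}^2}\Big[(u_\beta^2+v_\beta^2)^p-|u_\beta|^{2p}-|v_\beta|^{2p}-2\beta|u_\beta v_\beta|^p\Big]\dx=0 .
\]
The integrand is nonnegative and, by strictness of the pointwise inequality together with $u_\beta v_\beta>0$ everywhere, strictly positive — a contradiction. Therefore $(u_\beta,v_\beta)$ is semi–trivial.

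The main obstacle is item $(ii)$: it needs the sharp pointwise inequality with the precise constant $2^{p-1}-1$ and, equally importantly, its strictness, and then the rigidity step that propagates the equality of energy levels down to the pointwise level, where the strong positivity furnished by Theorem~\ref{teo-Pohozaev} closes the loop. By comparison, items $(i)$ and $(iii)$ amount to a one–parameter fiber comparison and a direct substitution, respectively.
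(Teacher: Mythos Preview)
Your proposal is correct and follows essentially the same route as the paper: the diagonal test $(u/\sqrt2,u/\sqrt2)$ for $(i)$, the polar radius $w=\sqrt{u_\beta^2+v_\beta^2}$ together with the sharp inequality $(a^2+b^2)^p\ge a^{2p}+b^{2p}+2\beta(ab)^p$ for $(ii)$, and direct substitution for $(iii)$. The paper's Lemma~6.3 is exactly your pointwise inequality, and its polar-coordinate identity $|\nabla u|^2+|\nabla v|^2=|\nabla\rho|^2+\rho^2|\nabla\theta|^2$ is your $|\nabla w|^2\le|\nabla u_\beta|^2+|\nabla v_\beta|^2$.

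The only organizational differences are: you introduce the scalar level $m=\inf_{\mathcal{M}_1}I_1$ and the scalar ground state $u_0$, then prove $(i)$ by showing $c_\beta<m$ directly, whereas the paper argues by contradiction using $u_\beta$ itself (so it never needs $u_0$). For $(ii)$ you first write the non-strict inequality $\|w\|_{2p}^{2p}\ge\psi_\beta(u_\beta,v_\beta)$, squeeze to get $c_\beta=m$ and then extract equality, obtaining a contradiction with the strict pointwise inequality; the paper is slightly shorter since it invokes the \emph{strict} inequality $\psi_\beta(u,v)<\|\rho\|_{2p}^{2p}$ from the start (positivity of $u_\beta,v_\beta$ via Theorem~\ref{teo-Pohozaev} is already available), and gets $c_\beta<I_\beta(t_0^2\rho_{t_0},0)\le c_\beta$ in one stroke without any rigidity step. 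One minor remark: your assertion that the minimum of $\tau\mapsto((1+\tau)^p-1-\tau^p)\tau^{-p/2}$ is attained \emph{only} at $\tau=1$ fails for $p=2$ (the function is then identically $2$), but this does not affect the argument, since for $\beta<2^{p-1}-1$ the strict gap $(2^p-2-2\beta)(ab)^p>0$ whenever $ab>0$ already gives the strictness you need.
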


\begin{remark}
    It is worth emphasizing that a major difficulty in dealing with the class of systems \eqref{S} lies in the fact that we could not establish that $\mathcal{M}_\beta$ constitutes a natural constraint for $I_\beta$ through the Lagrange Multiplier Theorem. To overcome this obstacle, we employed the Quantitative Deformation Lemma (see Proposition \ref{esquema}).
\end{remark}

\begin{remark}
    In \cite{cv2023,nodea2025}, the authors investigated the $(2,q)$-Laplacian operator with $1<q<2$. Our contribution complements these results by addressing the remaining case $q=2$, namely the classical Laplacian.
\end{remark}

\textbf{Outline.} In the forthcoming section, we introduce the variational structure and the Nehari manifold. Section \ref{Sec-N-P-M} is devoted to introduce a Nehari-Pohozaev manifold and your proprieties. In the Section \ref{Sec-Teo-existencia}, we prove of Theorem \ref{existencia-de-solucao}. The Section \ref{sec-regularidade-Pohozaev} is dedicated to the proof of regularity and Pohozaev identity. Finally, in Section \ref{semitrivial-vetorial}, we present the proof to establish semi-trivial and vector solutions.

	\section{Variational framework}

        In this Section we establish the variational framework to our problem. Recall that the energy functional $I_\beta: W^\lambda \times W^\lambda \to \mathbb{R}$ associated to System \eqref{S} is given by
	$$
	I_{\beta}(u,v)=\frac{1}{2}\left(\|\nabla u\|_{2}^2+\|\nabla v\|_{2}^2\right) + \frac{1}{4}\mathcal{V}(u,v)-\frac{1}{2p}\psi_\beta(u,v),
	$$
	where
	$$
	\mathcal{V}(u,v):=\int_{\mathbb{R}^2}\int_{\mathbb{R}^2}\log(|x-y|)\left(u^2(y)+v^2(y)\right)\left(u^2(x)+v^2(x)\right)\dy\dx
	$$
	and
	\begin{equation*}
		\psi_\beta(u,v):=\|u\|_{2p}^{2p}+\|v\|_{2p}^{2p}+2\beta\int_{\mathbb{R}^2}|uv|^p\dx.
	\end{equation*}
        By using the identity $\log r=\log(\lambda+r)-\log(1+\lambda r^{-1})$, for any $r,\lambda>0$, we can write 
	\begin{equation}\label{decomposicao}
		\mathcal{V}(u,v)=\mathcal{V}_1(u,v)-\mathcal{V}_2(u,v),
	\end{equation}
	where
	$$
	\mathcal{V}_1(u,v):=\int_{\mathbb{R}^2}\int_{\mathbb{R}^2}\log\left(\lambda+|x-y|\right)\left(u^2(y)+v^2(y)\right)\left(u^2(x)+v^2(x)\right)\dy\dx,
	$$
	and
	$$
	\mathcal{V}_2(u,v):=\int_{\mathbb{R}^2}\int_{\mathbb{R}^2}\log\left(1+\lambda|x-y|^{-1}\right)\left(u^2(y)+v^2(y)\right)\left(u^2(x)+v^2(x)\right)\dy\dx.
	$$
	As proved in \cite[Lemma 2.2]{Cingolani-Weth}, the non-local terms $\mathcal{V}_1,\,\mathcal{V}_2$ are well-defined and  belong to $C^1(W^\lambda\times W^\lambda)$. Furthermore, taking into account that  $\lambda+|x-y|\leq(\lambda+|x|)(\lambda+|y|)$, for any $x,y\in\mathbb{R}^{2}$ and $\lambda\geq1$, we have that
	\begin{equation}\label{des-log-elementar}
		\log(\lambda+|x-y|)\leq\log((\lambda+|x|)(\lambda+|y|))=\log(\lambda+|x|)+\log(\lambda+|y|).
	\end{equation}
	By using a straightforward computation, we estimate
	\begin{equation}\label{desigualdadeV1}
		\mathcal{V}_1(u,v)\leq 2\left(\|u\|^2_2+\|v\|^2_2\right)\left(\|u\|^2_{W^\lambda}+\|v\|^2_{W^\lambda}\right),
	\end{equation}	
	for any $u,v \in W^\lambda$.
	In order to estimate $\mathcal{V}_2$, we will use the Hardy-Littlewood-Sobolev inequality, see \cite{lieb1}, which can be stated as follows:
	\begin{proposition}\label{hls} Let $q,\,s>1$, $0<\mu<2$ with $1/q+1/s+\mu/2=2$, $g\in L^q(\mathbb{R}^2)$ and $h\in L^s(\mathbb{R}^2)$. Then, there exists a constant  $C(q,s,\mu)>0$ such that
		$$
		\left| \int_{\mathbb{R}^2}\int_{\mathbb{R}^2}\frac{g(y)h(x)}{|x-y|^\mu}\dy\dx\right|\leq C(q,s,\mu)\|g\|_q\|h\|_s.
		$$
	\end{proposition}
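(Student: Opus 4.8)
The statement is the classical Hardy--Littlewood--Sobolev inequality on $\mathbb{R}^2$, so in the paper one may simply cite \cite{lieb1}; if a self-contained argument were wanted, the plan would be as follows. First, since $\left|\int_{\mathbb{R}^2}\int_{\mathbb{R}^2}|x-y|^{-\mu}g(y)h(x)\dy\dx\right|\le\int_{\mathbb{R}^2}\int_{\mathbb{R}^2}|x-y|^{-\mu}|g(y)||h(x)|\dy\dx$, we may assume $g,h\ge0$. The double integral then equals $\int_{\mathbb{R}^2}h(x)\,(k\ast g)(x)\dx$ with $k(x):=|x|^{-\mu}$, and Hölder's inequality with the conjugate pair $(s,s')$ bounds it by $\|h\|_s\,\|k\ast g\|_{s'}$. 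Writing $t:=s'$, the hypothesis $1/q+1/s+\mu/2=2$ is exactly the relation $1+1/t=\mu/2+1/q$, and from $q>1$, $s>1$, $0<\mu<2$ one checks that $1<q<t<\infty$ and $1<2/\mu<\infty$. Hence everything reduces to the mapping estimate $\|k\ast g\|_t\le C(q,s,\mu)\|g\|_q$.

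For that estimate the key observation is that, although $k\notin L^r(\mathbb{R}^2)$ for any $r$, it belongs to the weak space $L^{2/\mu,\infty}(\mathbb{R}^2)$: for $\tau>0$ one has $|\{x:|x|^{-\mu}>\tau\}|=|\{|x|<\tau^{-1/\mu}\}|=\pi\,\tau^{-2/\mu}$, so $\sup_{\tau>0}\tau\,|\{k>\tau\}|^{\mu/2}<\infty$. I would then invoke the generalized Young inequality for weak-type kernels: if $k\in L^{r,\infty}(\mathbb{R}^2)$ with $1<r<\infty$ and $1+1/t=1/r+1/q$ with $1<q<t<\infty$, then $\|k\ast g\|_t\le C(q,r)\,\|k\|_{L^{r,\infty}}\|g\|_q$. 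Applying this with $r=2/\mu$ gives precisely $\|k\ast g\|_t\le C(q,s,\mu)\|g\|_q$, and combining it with the Hölder step above yields the claim, with constant proportional to $\|k\|_{L^{2/\mu,\infty}}$.

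The technical heart, and the step I expect to be the main obstacle, is the weak Young inequality itself. The standard route fixes a height $\rho>0$, splits $k=k\,\mathbf{1}_{\{k\le\rho\}}+k\,\mathbf{1}_{\{k>\rho\}}$, notes that $k\,\mathbf{1}_{\{k\le\rho\}}\in L^a$ for every $a>r$ and $k\,\mathbf{1}_{\{k>\rho\}}\in L^b$ for every $b<r$ with norms controlled by suitable powers of $\rho$ and $\|k\|_{L^{r,\infty}}$, estimates the distribution function of $k\ast g$ at each level $\lambda$ via ordinary Young's inequality applied to the two pieces, and finally optimizes by choosing $\rho$ as a suitable power of $\lambda$; integrating the resulting bound $|\{|k\ast g|>\lambda\}|\lesssim\lambda^{-t}\|g\|_q^t$ — run, to reach the strong rather than the weak $L^t$ norm, with a pair of nearby exponents and combined à la Marcinkiewicz — produces $\|k\ast g\|_t\le C\|g\|_q$. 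An interpolation-free alternative is to bound $|(k\ast g)(x)|$ directly by splitting the $y$-integral over $\{|x-y|<R\}$ and $\{|x-y|\ge R\}$, using Hölder on each region and then optimizing in $R=R(x)$; this essentially reconstructs the same balancing and is the path followed, for instance, in Lieb--Loss.
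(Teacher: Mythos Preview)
Your proposal is correct: the paper does not prove this proposition at all but merely states it and cites \cite{lieb1}, exactly as you anticipated in your first sentence. The self-contained sketch you supply---reduction to the convolution estimate $\|\,|\cdot|^{-\mu}\ast g\,\|_{s'}\le C\|g\|_q$ via H\"older, followed by the weak-Young inequality using $|\cdot|^{-\mu}\in L^{2/\mu,\infty}(\mathbb{R}^2)$ and the level-set splitting/Marcinkiewicz argument---is the standard route and is sound, but it goes well beyond what the paper itself does.
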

	Applying Proposition \ref{hls} with $\mu=1$, $q=s=4/3$, and using the elementary inequality $\log(1+r)\leq r$, for any $r>0$ together with the Sobolev embedding $W^\lambda\hookrightarrow H^1(\mathbb{R}^2) \hookrightarrow L^{8/3}(\mathbb{R}^2)$, we deduce 
	\begin{equation}
		\begin{aligned}\label{desigualdadeV2}
			\mathcal{V}_2(u,v)
			&\leq\lambda\int_{\mathbb{R}^2}\int_{\mathbb{R}^2}\frac{u^2(y)u^2(x)+u^2(y)v^2(x)+v^2(y)u^2(x)+v^2(y)v^2(x)}{|x-y|}\dy\dx
			\\
			&\leq C_1\left(\|u\|_{8/3}^4+2\|u\|_{8/3}^2\|v\|_{8/3}^2+\|v\|_{8/3}^4\right)\\
			&\leq C_2\left(\|u\|^4_{W^\lambda}+\|u\|_{W^\lambda}^2\|v\|_{W^\lambda}^2+\|v\|_{W^\lambda}^4\right).
		\end{aligned}
	\end{equation}
        Therefore, $I_\beta$ is well-defined. By standard arguments one may conclude that $I_\beta$ is $C^1$ with	Gateaux derivative
	$$
	\begin{aligned}
		I'_\beta(u,v)(\varphi,\psi)&=\int_{\mathbb{R}^2} \left(\nabla u\nabla\varphi+\nabla v\nabla\psi \right) \dx +\int_{\mathbb{R}^2}\phi_{u,v}\left(u\varphi+v\psi\right)\dx  
		\\
		&\quad- \int_{\mathbb{R}^2}\left(|u|^{2p-2}u+\beta|v|^p|u|^{p-2}u\right)\varphi\dx- \int_{\mathbb{R}^2}\left(|v|^{2p-2}v+\beta|u|^p|v|^{p-2}v\right)\psi\dx,
	\end{aligned}
	$$
	for any $u,v,\varphi,\psi \in W^\lambda$.
	
	\section{Nehari-Pohozaev manifold}\label{Sec-N-P-M}

    In what follows, we recall the Nehari-Pohozev set 
	$$
	\mathcal{M}_{\beta}:=\left\lbrace (u,v)\in \left(W^\lambda\times W^\lambda\right)\setminus\{(0,0)\} : J_{\beta}(u,v)=0 \right\rbrace, 
	$$
	where $J_{\beta}(u,v)$ is defined in \eqref{Juv}. Firstly, we prove that the set $\mathcal{M}_\beta$ is a manifold of $C^1$ class. 
	\begin{proposition}
		The set $\mathcal{M}_{\beta}$ is a $C^1$--manifold. 
	\end{proposition}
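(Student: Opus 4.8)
The plan is to show that $\mathcal{M}_\beta$ is a $C^1$-manifold by verifying that $J_\beta$ is a $C^1$ functional on $W^\lambda\times W^\lambda$ and that $0$ is a regular value of its restriction to a neighborhood of $\mathcal{M}_\beta$; that is, $J_\beta'(u,v)\neq 0$ for every $(u,v)\in\mathcal{M}_\beta$. The $C^1$ regularity of $J_\beta$ follows immediately from the fact, recalled in the previous section, that $\mathcal{V}_1,\mathcal{V}_2\in C^1(W^\lambda\times W^\lambda)$ (hence $\mathcal{V}=\mathcal{V}_1-\mathcal{V}_2\in C^1$), together with the obvious smoothness of the remaining quadratic and polynomial terms. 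So the heart of the matter is the regularity of the value $0$.

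The key computational step is to evaluate $J_\beta'(u,v)(u,v)$ for $(u,v)\in\mathcal{M}_\beta$ and show it is strictly negative, which forces $J_\beta'(u,v)\neq 0$. I would use the scaling structure that motivated the manifold: writing $g(t):=I_\beta(t^2 u_t, t^2 v_t)$ with $u_t(x)=u(tx)$, one checks by a change of variables that the various terms scale as definite powers of $t$, namely the gradient term like $t^2$, the term $\tfrac14(\|u\|_2^2+\|v\|_2^2)^2$ contributing a $\log t$ piece (this is the source of the $-\tfrac14(\|u\|_2^2+\|v\|_2^2)^2$ in $J_\beta$), and the $L^{2p}$-type terms like $t^{4p-2}$ after accounting for the $L^2$-normalization of the convolution term. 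Then $J_\beta(u,v)=g'(1)$, and differentiating once more and evaluating at $t=1$ yields an explicit formula for $g''(1)$ which coincides with $J_\beta'(u,v)(u,v)$ up to the constraint $J_\beta(u,v)=0$. Using $J_\beta(u,v)=0$ to eliminate one of the terms, one collects the remaining contributions and shows the resulting expression is a strictly negative combination of $\|\nabla u\|_2^2+\|\nabla v\|_2^2$, $(\|u\|_2^2+\|v\|_2^2)^2$, $\mathcal{V}_2(u,v)$ (which appears with a favorable sign via \eqref{decomposicao}), and the strictly positive quantity $\psi_\beta(u,v)$; the point $(u,v)\neq(0,0)$ guarantees this sum does not vanish.

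I expect the main obstacle to be the bookkeeping in the second step: getting the scaling exponents exactly right for the logarithmic convolution term $\mathcal{V}$, since under $u\mapsto t^2 u_t$ the measure $u^2\,dx$ is preserved but the kernel $\log|x-y|$ acquires an additive $\log(1/t)$ term, producing the quadratic $(\|u\|_2^2+\|v\|_2^2)^2$ contribution rather than a pure power law. One must carry this through carefully so that the identity $J_\beta(u,v)=2I_\beta'(u,v)(u,v)-\mathcal{P}_\beta(u,v)$ (stated in the introduction) is consistent with the computation. Once the formula for $J_\beta'(u,v)(u,v)$ is in hand, the sign analysis is routine: every surviving term has a definite sign, and strict negativity is immediate from $2\le p<\infty$ and $(u,v)\neq(0,0)$. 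Hence $0$ is a regular value and $\mathcal{M}_\beta=J_\beta^{-1}(0)$ is a $C^1$-manifold of codimension one.
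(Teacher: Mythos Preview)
Your overall strategy is right and matches the paper: show $J_\beta\in C^1$ and that $J_\beta'(u,v)\neq 0$ for every $(u,v)\in\mathcal{M}_\beta$ by proving $J_\beta'(u,v)(u,v)<0$. But your route to the sign via the fiber map contains an error. The identity you assert, that $g''(1)$ ``coincides with $J_\beta'(u,v)(u,v)$ up to the constraint,'' is false: the tangent vector to the curve $t\mapsto(t^2u_t,t^2v_t)$ at $t=1$ is $(2u+x\cdot\nabla u,\,2v+x\cdot\nabla v)$, not $(u,v)$, so $g''(1)=J_\beta'(u,v)(2u+x\cdot\nabla u,\,2v+x\cdot\nabla v)$ (once one uses $g'(1)=0$), and this quantity is genuinely different from $J_\beta'(u,v)(u,v)$ even after adding any multiple of $J_\beta(u,v)$. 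Worse, concluding $J_\beta'(u,v)\neq 0$ from $g''(1)\neq 0$ via the chain rule would require $x\cdot\nabla u\in W^\lambda$, which you have no reason to expect for arbitrary $u\in W^\lambda$.

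The paper avoids the fiber map entirely here. One simply differentiates each term of $J_\beta$ in the direction $(u,v)$ using homogeneity (the gradient term is degree $2$, the $L^2$-quartic and $\mathcal{V}$ are degree $4$, $\psi_\beta$ is degree $2p$) to get
\[
J_\beta'(u,v)(u,v)=4\bigl(\|\nabla u\|_2^2+\|\nabla v\|_2^2\bigr)-(\|u\|_2^2+\|v\|_2^2)^2+4\,\mathcal{V}(u,v)-2(2p-1)\psi_\beta(u,v),
\]
and then subtracts $4J_\beta(u,v)=0$. The $\mathcal{V}$-term and the $L^2$-quartic cancel completely, leaving
\[
J_\beta'(u,v)(u,v)=-4\bigl(\|\nabla u\|_2^2+\|\nabla v\|_2^2\bigr)-\frac{2(2p-1)(p-2)}{p}\psi_\beta(u,v)<0
\]
for $p\geq 2$ and $(u,v)\neq(0,0)$. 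No scaling computations, no decomposition $\mathcal{V}=\mathcal{V}_1-\mathcal{V}_2$, and no sign discussion of $\mathcal{V}_2$ are needed; your mention of $\mathcal{V}_2$ ``appearing with a favorable sign'' signals that you had not found the right linear combination. Replace your second-derivative argument with this direct computation and the proof goes through.
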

	\begin{proof}
		Note that $\mathcal{M}_\beta = J_\beta^{-1}(0)$ and $J_\beta$ is in $C^1$ class. Moreover, for each $(u,v) \in W^\lambda\times W^\lambda$, we mention that 
		\begin{equation}
			J_\beta'(u,v) (u,v) = 4 (\|\nabla u\|_2^2 + \|\nabla v\|_2^2) - (\|u\|^2_2 + \|v\|_2^2)^2 + 4 \mathcal{V}(u,v) - 2(2p -1) \psi_\beta(u,v).
		\end{equation}	
		Let us consider $(u, v) \in \mathcal{M}_\beta$. Hence, 
		\begin{eqnarray}\label{esquema}
			J_\beta'(u,v) (u,v) &=& J_\beta'(u,v) (u,v) - 4 J_\beta (u,v) \nonumber \\
			&\leq&  - 4 (\|\nabla u\|_2^2 + \|\nabla v\|_2^2) + \left[-2(2p -1) + 4 \frac{2p -1}{p}\right]\psi_\beta(u,v)  \nonumber \\
			&<&  -\frac{2(2p -1)(p-2)}{p} \psi_\beta(u,v) \leq 0 
		\end{eqnarray} 
		holds true for each $p \geq 2$. Therefore, by using the Implicit Function Theorem we deduce that $\mathcal{M}_\beta$ is a manifold of $C^1$ class. This ends the proof. 
	\end{proof}
    
    Now, we shall prove that any nonzero function admits exactly one projection over the Nehari-Pohozaev manifold. For this purpose, we need the following auxiliary result:
	
	\begin{lemma}
		Given $u,v \in W^\lambda$, then
		\begin{equation}\label{gradiente}
			\|\nabla \left(t^2u_t\right)\|_2^2=t^4\|\nabla u\|_2^2, \quad \|\nabla \left(t^2v_t\right)\|_2^2=t^4\|\nabla v\|_2^2,
		\end{equation}
		\begin{equation}\label{nao-local3}
			\mathcal{V}(t^2u_t,t^2v_t)=t^4\log(t^{-1})\left(\|u\|_{2}^2+ \|v\|_{2}^2\right)^2+t^{4}\mathcal{V}(u,v),
		\end{equation}
		and
		\begin{equation}\label{l2p}
			\psi_\beta(t^2u_t,t^2v_t)=t^{4p-2}\psi_\beta(u,v),
		\end{equation}
		where $u_t(x):=u(tx)$ and $v_t(x):=v(tx)$, for $t>0$.
	\end{lemma}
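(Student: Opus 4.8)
The plan is to verify each of the three scaling identities directly from the definitions, using only the change of variables $z = tx$ (equivalently $y = tz$) in $\mathbb{R}^2$, which contributes a Jacobian factor $t^{-2}$ per integration variable.

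\textbf{Step 1: the gradient term.} For $w \in W^\lambda$ write $w_t(x) = w(tx)$, so $\nabla(t^2 w_t)(x) = t^2 (\nabla w_t)(x) = t^3 (\nabla w)(tx)$. Then
\[
\|\nabla(t^2 w_t)\|_2^2 = \int_{\mathbb{R}^2} t^6 |(\nabla w)(tx)|^2 \dx = t^6 \cdot t^{-2} \int_{\mathbb{R}^2} |(\nabla w)(z)|^2 \,\mathrm{d}z = t^4 \|\nabla w\|_2^2,
\]
and applying this with $w = u$ and $w = v$ gives \eqref{gradiente}. The same computation shows $\|t^2 w_t\|_2^2 = t^4 \cdot t^{-2}\|w\|_2^2 = t^2 \|w\|_2^2$; more generally $\|t^2 w_t\|_q^q = t^{2q} t^{-2}\|w\|_q^q = t^{2q-2}\|w\|_q^q$, a fact I will reuse below.

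\textbf{Step 2: the power term $\psi_\beta$.} By the $L^q$ scaling just noted with $q = 2p$, we get $\|t^2 u_t\|_{2p}^{2p} = t^{4p-2}\|u\|_{2p}^{2p}$ and likewise for $v$. For the cross term, $\int_{\mathbb{R}^2} |t^2 u_t \cdot t^2 v_t|^p \dx = t^{4p} \int_{\mathbb{R}^2} |u(tx)v(tx)|^p \dx = t^{4p} t^{-2} \int_{\mathbb{R}^2} |u(z)v(z)|^p \,\mathrm{d}z = t^{4p-2} \int_{\mathbb{R}^2}|uv|^p\dx$. Summing the three contributions with the weights in \eqref{psi} yields $\psi_\beta(t^2 u_t, t^2 v_t) = t^{4p-2}\psi_\beta(u,v)$, which is \eqref{l2p}.

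\textbf{Step 3: the logarithmic nonlocal term $\mathcal{V}$ — the main point.} Here I substitute $x = \xi/t$, $y = \eta/t$ in the double integral defining $\mathcal{V}(t^2 u_t, t^2 v_t)$. The densities scale as $(t^2 u_t)^2(x) + (t^2 v_t)^2(x) = t^4\bigl(u^2(tx) + v^2(tx)\bigr)$, and $|x - y| = t^{-1}|\xi - \eta|$, so $\log|x-y| = \log|\xi - \eta| + \log(t^{-1})$. Each change of variables contributes $t^{-2}$, so
\[
\mathcal{V}(t^2 u_t, t^2 v_t) = t^8 \cdot t^{-4} \int_{\mathbb{R}^2}\int_{\mathbb{R}^2} \bigl(\log|\xi-\eta| + \log(t^{-1})\bigr)\bigl(u^2(\eta)+v^2(\eta)\bigr)\bigl(u^2(\xi)+v^2(\xi)\bigr)\,\mathrm{d}\eta\,\mathrm{d}\xi.
\]
The term carrying $\log|\xi-\eta|$ reassembles into $t^4\mathcal{V}(u,v)$, while the constant $\log(t^{-1})$ factors out of both integrals, leaving $t^4 \log(t^{-1}) \bigl(\int_{\mathbb{R}^2}(u^2+v^2)\bigr)^2 = t^4 \log(t^{-1})\bigl(\|u\|_2^2 + \|v\|_2^2\bigr)^2$. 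Adding the two pieces gives exactly \eqref{nao-local3}. The only subtlety worth a remark is justifying the splitting of $\log|x-y|$ inside the (absolutely convergent, by the decomposition \eqref{decomposicao} and the bounds \eqref{desigualdadeV1}--\eqref{desigualdadeV2}) integral: since $u,v \in W^\lambda$ the integrals $\mathcal{V}_1, \mathcal{V}_2$ are each finite, so Fubini and linearity apply and the rearrangement above is legitimate. This is the step I expect to require the most care; the gradient and power identities are routine $L^q$-scaling.
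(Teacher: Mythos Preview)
Your proof is correct and follows essentially the same approach as the paper: change of variables $z=tx$ for the gradient and $L^{2p}$ terms, and the two-variable substitution together with the splitting $\log|x-y|=\log|\xi-\eta|+\log(t^{-1})$ for $\mathcal{V}$. The only cosmetic difference is that the paper performs the two substitutions in $\mathcal{V}$ sequentially (first in $y$, then in $x$) rather than simultaneously, and it does not pause to justify the splitting via \eqref{decomposicao}--\eqref{desigualdadeV2} as you do; your added remark on absolute convergence is a small bonus, not a departure.
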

	
	\begin{proof}
		Considering $\overline{x}=tx$, we have
		$$
			\int_{\mathbb{R}^2}|\nabla \left(t^2u_t(x)\right)|^2\dx = t^6\int_{\mathbb{R}^2}|\nabla u(tx)|^2\dx = t^4\int_{\mathbb{R}^2}|\nabla u(\overline{x})|^2\,\mathrm{d}\overline{x},
		$$
		and so \eqref{gradiente} holds. Moreover,
		\begin{equation}\label{nao-local}
			\mathcal{V}(t^2u_t,t^2v_t) = t^8\mathcal{V}(u_t,v_t).
		\end{equation}
		Using the change of variable $\overline{y}=ty$, we obtain
		$$
		\begin{aligned}
			\mathcal{V}(u_t,v_t)&=\int_{\mathbb{R}^2}\int_{\mathbb{R}^2}\log(|x-y|)\left(u^2(ty)+ v^2(ty)\right)\left(u^2(tx)+v^2(tx)\right)\dy \dx
			\\
			&= \int_{\mathbb{R}^2}\left(u^2(tx)+v^2(tx)\right)\left(\int_{\mathbb{R}^2}\log(|x-y|)\left(u^2(ty)+v^2(ty)\right)\dy\right)\dx
			\\
			&=t^{-2}\int_{\mathbb{R}^2}\left(u^2(tx)+v^2(tx)\right)\left(\int_{\mathbb{R}^2}\log(|x-t^{-1}\overline{y}|)\left(u^2(\overline{y})+v^2(\overline{y})\right)\,\mathrm{d}\overline{y}\right)\dx,
		\end{aligned}
		$$
		which implies that
		$$
		\mathcal{V}(u_t,v_t) =t^{-4} \int_{\mathbb{R}^2}\int_{\mathbb{R}^2}\log(|t^{-1}\overline{x}-t^{-1}\overline{y}|)\left(u^2(\overline{y})+v^2(\overline{y})\right)\left(u^2(\overline{x})+v^2(\overline{x})\right)\,\mathrm{d}\overline{y}\,\mathrm{d}\overline{x}.
		$$
		Since $\log(|t^{-1}\overline{x}-t^{-1}\overline{y}|) = \log(t^{-1}) + \log(|\overline{x}-\overline{y}|)$, then
		\begin{equation*}\label{nao-local2}
			\mathcal{V}(u_t,v_t) = t^{-4}\log(t^{-1})\left(\|u\|_{2}^2+ \|v\|_{2}^2\right)^2 + t^{-4}\mathcal{V}(u,v).
		\end{equation*}
		Hence, by \eqref{nao-local}, we obtain \eqref{nao-local3}.
	
	Finally, noting that 
	\begin{equation}\label{mud-var-norma-2p}
		\int_{\mathbb{R}^2}|t^2u_t(x)|^{2p}\dx = t^{4p}\int_{\mathbb{R}^2}|u(tx)|^{2p}\dx = t^{4p-2}\int_{\mathbb{R}^2}|u(\overline{x})|^{2p}\,\mathrm{d}\overline{x}
	\end{equation}
	and
	$$
	\int_{\mathbb{R}^2}|t^2u_t(x)t^2v_t(x)|^p\dx = t^{4p}\int_{\mathbb{R}^2}|u(tx)v(tx)|^p\dx = t^{4p-2}\int_{\mathbb{R}^2}|u(\overline{x})v(\overline{x})|^p\,\mathrm{d}\overline{x},
	$$
	we conclude that \eqref{l2p} is valid and the proof is completed.	
	\end{proof}    
    
    Now, we are able to prove the following result:

	\begin{lemma}\label{projecao}
		For each $(u,v)\in \left(W^\lambda\times W^\lambda \right)\setminus\{(0,0)\}$, there exists a unique $t_0=t_0(\beta,u,v)>0$ such that $(t_0^2u_{t_0},t_0^2v_{t_0}) \in \mathcal{M}_{\beta}$, with 
		\begin{equation}\label{maximo}
			I_{\beta}(t_0^2u_{t_0},t_0^2v_{t_0}) = \max_{t>0}I_{\beta}(t^2u_{t},t^2v_{t}).
		\end{equation}
		Furthermore, if $J_{\beta}(u,v)\leq 0$, then $t_0\in(0,1]$.
	\end{lemma}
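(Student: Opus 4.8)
The plan is to reduce the statement to a study of the single real function $g(t):=I_\beta(t^2u_t,t^2v_t)$, $t>0$. Using the scaling identities \eqref{gradiente}, \eqref{nao-local3}, \eqref{l2p}, together with the elementary relation $\|t^2u_t\|_2^2=t^2\|u\|_2^2$ (the same change of variables $\overline x = tx$), one obtains the closed form
$$
g(t)=\frac{a}{2}\,t^4+\frac{c}{4}\,t^4-\frac{b}{4}\,t^4\log t-\frac{d}{2p}\,t^{4p-2},
$$
where $a:=\|\nabla u\|_2^2+\|\nabla v\|_2^2$, $b:=\big(\|u\|_2^2+\|v\|_2^2\big)^2$, $c:=\mathcal V(u,v)$ and $d:=\psi_\beta(u,v)$. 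Since $(u,v)\neq(0,0)$ and $W^\lambda\hookrightarrow H^1(\mathbb R^2)$, any nontrivial component has both nonzero gradient and nonzero $L^{2p}$-norm, so $b>0$ and $d>0$, whereas $c$ may have any sign. Differentiating $g$ and comparing with the expression \eqref{Juv} for $J_\beta$ (using the same scalings) one checks the key identity
$$
g'(t)=\tfrac1t\,J_\beta\big(t^2u_t,t^2v_t\big),
$$
so that $g'(1)=J_\beta(u,v)$ and, for $t>0$, one has $(t^2u_t,t^2v_t)\in\mathcal M_\beta$ if and only if $g'(t)=0$. Hence it suffices to prove that $g$ has a unique critical point on $(0,\infty)$, at which its maximum is attained.

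For this I would factor $g'(t)=t^3h(t)$, where
$$
h(t):=2a-\tfrac b4+c-b\log t-\tfrac{(4p-2)d}{2p}\,t^{4p-6}.
$$
Because $b>0$, $d>0$, and $p\geq2$ gives $4p-6\geq2>0$, the map $h$ is strictly decreasing on $(0,\infty)$, with $h(t)\to+\infty$ as $t\to0^+$ (driven by $-b\log t$) and $h(t)\to-\infty$ as $t\to+\infty$ (driven by $-\tfrac{(4p-2)d}{2p}t^{4p-6}$). Therefore $h$ has a unique zero $t_0\in(0,\infty)$, with $h>0$ on $(0,t_0)$ and $h<0$ on $(t_0,\infty)$. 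Consequently $g$ is strictly increasing on $(0,t_0)$ and strictly decreasing on $(t_0,\infty)$, which shows at once that $t_0$ is the unique critical point of $g$, that $(t_0^2u_{t_0},t_0^2v_{t_0})\in\mathcal M_\beta$ (it is nonzero since $t_0>0$), and that $I_\beta(t_0^2u_{t_0},t_0^2v_{t_0})=g(t_0)=\max_{t>0}g(t)$, i.e.\ \eqref{maximo}; uniqueness of the projection is precisely uniqueness of the zero of $g'$. Finally, if $J_\beta(u,v)\leq0$ then $g'(1)\leq0$, and since $g'>0$ on $(0,t_0)$ this forces $1\geq t_0$, hence $t_0\in(0,1]$.

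I do not anticipate a genuine obstacle here, as the argument is one-dimensional calculus once the scaling identities are available. The points demanding a little care are: (i) the strict positivity of $b$ and $d$ for $(u,v)\neq(0,0)$ — without $b>0$ the function $h$ would stay bounded near $0$ and a projection might fail to exist; and (ii) the control of the logarithmic term, which is harmless both near $0$ (there $t^4\log t\to0$ and the sign of $g'$ near $0$ is governed by $-b\log t>0$) and near infinity (there it is dominated by the power $t^{4p-2}$, precisely because $p\geq2$ implies $4p-2>4$).
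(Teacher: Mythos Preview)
Your proposal is correct and follows essentially the same approach as the paper: both compute the fiber $t\mapsto I_\beta(t^2u_t,t^2v_t)$ explicitly, establish the identity $g'(t)=t^{-1}J_\beta(t^2u_t,t^2v_t)$, and prove uniqueness by observing that $g'(t)/t^3$ is strictly decreasing (the paper's $f'(t)/t^3$ is your $h(t)$). Your treatment is slightly more streamlined in that you obtain existence and uniqueness simultaneously from the monotonicity and limits of $h$, whereas the paper first argues existence of the maximum from $f(t)>0$ near $0$ and $f(t)\to-\infty$, and then proves uniqueness separately; but the underlying computation is identical.
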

	
	\begin{proof}
		Consider the function $f:(0,\infty)\to\mathbb{R}$ given by $f(t)=I_{\beta}(t^2u_t,t^2v_t)$.	 We claim that
		\begin{equation}\label{relacao-projecao}
			f'(t)=0 \Longleftrightarrow J_{\beta}(t^2u_{t},t^2v_{t})=0.
		\end{equation}
		In fact, it follows from \eqref{Juv} that
		$$
		\begin{aligned}
			J_{\beta}(t^2u_{t},t^2v_{t}) &= 2\left(\|\nabla (t^2u_t)\|_{2}^2+\|\nabla (t^2v_t)\|_{2}^2\right) - \frac{1}{4}\left(\|t^2u_t\|_{2}^2+ \|t^2v_t\|_{2}^2\right)^2 + \mathcal{V}(t^2u_t,t^2v_t) 
			\\
			&\quad- \frac{4p-2}{2p}\psi_\beta(t^2u_t,t^2v_t).
		\end{aligned}
		$$
		Noting that
		\begin{equation}\label{norma-L2-mud-var}
			\int_{\mathbb{R}^2}\left(t^2u_t(x)\right)^2\dx = t^4  \int_{\mathbb{R}^2}u^2(tx)\dx = t^2\int_{\mathbb{R}^2}u^2(\overline{x})\,\mathrm{d}\overline{x},
		\end{equation}
		we can use \eqref{gradiente}-\eqref{l2p} to deduce
		\begin{equation}\label{J-t2ut-t2vt}
			\begin{aligned}
				J_{\beta}(t^2u_{t},t^2v_{t}) &= 2t^4\left(\|\nabla u\|_{2}^2+\|\nabla v\|_{2}^2\right)-\frac{t^4}{4}\left(\|u\|_{2}^2+ \|v\|_{2}^2\right)^2 
				\\
				&\quad+  t^4\log(t^{-1})\left(\|u\|_{2}^2+ \|v\|_{2}^2\right)^2+ t^{4}\mathcal{V}(u,v)
				-\frac{(4p-2)}{2p}t^{4p-2}\psi_\beta(u,v).
			\end{aligned}
		\end{equation}
		In view of \eqref{gradiente}-\eqref{l2p} and \eqref{norma-L2-mud-var}, we obtain
		\begin{equation}\label{funcional-utvt}
			\begin{aligned}
				f(t)&=I_{\beta}\left(t^2u_t,t^2v_t\right) \\
                    & = \frac{t^4}{2}\left(\|\nabla u\|_{2}^2+\|\nabla v\|_{2}^2\right) + \frac{t^4\log(t^{-1})}{4}\left(\|u\|_{2}^2+ \|v\|_{2}^2\right)^2 
				\\
				&\quad+ \frac{t^{4}}{4}\mathcal{V}(u,v)-\frac{t^{4p-2}}{2p}\psi_\beta(u,v).
			\end{aligned}
		\end{equation}
		Thus, we can conclude that
		\begin{equation}\label{derivada}
			\begin{aligned}
				f'(t) &= 2t^3\left(\|\nabla u\|_{2}^2+\|\nabla v\|_{2}^2\right)+ \frac{t^3\left(4\log(t^{-1})-1\right)}{4}\left(\|u\|_{2}^2+ \|v\|_{2}^2\right)^2
				+  t^3\mathcal{V}(u,v)
				\\
				&\quad-\frac{(4p-2)t^{4p-3}}{2p}\psi_\beta(u,v).
			\end{aligned}
		\end{equation}
		Combining this expression with \eqref{J-t2ut-t2vt}, we obtain
		\begin{equation}\label{igualdade-J-f}
			J_{\beta}(t^2u_{t},t^2v_{t}) = tf'(t),
		\end{equation}
		and hence \eqref{relacao-projecao}	holds.

		Next, we show that there exists $t_1>0$ such that $f(t)>0$, for any $t\in(0,t_1)$ and $\displaystyle\lim_{t\to\infty}f(t)=-\infty$.  Since $f(t)=I_{\beta}(t^2u_t,t^2v_t)$, from \eqref{funcional-utvt}, we infer
		$$
		f(t) = t^4\log(t^{-1})g(t),
		$$
		where
		$$
		\begin{aligned}
			g(t)&:=\frac{1}{2\log(t^{-1})}\left(\|\nabla u\|_{2}^2+\|\nabla v\|_{2}^2\right) + \frac{1}{4}\left(\|u\|_{2}^2+ \|v\|_{2}^2\right)^2 + \frac{1}{4\log(t^{-1})}\mathcal{V}(u,v)
			\\
			&\quad-\frac{t^{4p-6}}{2p\log(t^{-1})}\psi_\beta(u,v).
		\end{aligned}
		$$
		Taking $t\to0^+$, we can conclude that $1/\log(t^{-1})\to0$ and $t^{4p-6}/\log(t^{-1})\to0$, since $p\geq2>3/2$. Hence, there exists $t_1\in(0,1)$ such that $g(t)>0$, for any $t\in(0,t_1)$. The last statement implies that $f(t)>0$, for any $t\in(0,t_1)$. On other hand, by using the fact that $p\geq2>3/2$ and \eqref{funcional-utvt}, we have the convergence $\displaystyle\lim_{t\to\infty}f(t)=-\infty$. So, since $f(0)=0$, then the function $f$  achieves its maximum value at some $t_0=t_0(\beta,u,v)>0$. In particular, $f'(t_0)=0$. Thus, using \eqref{relacao-projecao}, the pair $(t_0^2u_{t_0},t_0^2v_{t_0})$ belongs to $\mathcal{M}_{\beta}$, with 
		$$
		I_{\beta}(t_0^2u_{t_0},t_0^2v_{t_0})=f(t_0)=\max_{t>0}f(t)=  \max_{t>0}I_{\beta}(t^2u_{t},t^2v_{t}).
		$$
		
		Now, we prove that $t_0$  is the unique critical point of $f$. In fact, by \eqref{derivada}, a direct calculation gives
		$$
		\begin{aligned}
			\frac{f'(t)}{t^3} &= 2\left(\|\nabla u\|_{2}^2+\|\nabla v\|_{2}^2\right) + \frac{\left(4\log(t^{-1})-1\right)}{4}\left(\|u\|_{2}^2+ \|v\|_{2}^2\right)^2 + \mathcal{V}(u,v)
			\\
			&\quad-\frac{(4p-2)t^{4p-6}}{2p}\psi_\beta(u,v).
		\end{aligned}
		$$
		Since $p\geq2>3/2$, then $f'(t)/t^3$ is decreasing. Suppose by contradiction that there exists $\widetilde{t}>0$ such that $f'(\widetilde{t})=0$. Assuming without loss of generality $\widetilde{t}<t_0$, we obtain
		$$
		0=\frac{f'(t_0)}{t_0^3}<\frac{f'(\widetilde{t})}{\widetilde{t}^3}=0,
		$$
		which is a contradiction, proving the uniqueness.
		
		Finally, we will ensure that $t_0\in(0,1]$, whenever $J_{\beta}(u,v)\leq 0$. From \eqref{igualdade-J-f},
		$$
		f'(1) = J_{\beta}(u,v) \leq0.
		$$
			Therefore, $t_0\in(0,1]$. The proof is completed.
		\end{proof}

		In view of Lemma~\ref{projecao}, $\mathcal{M}_\beta$ is not empty. Furthermore, the functional $I_\beta$ is bounded from below on $\mathcal{M}_\beta$. Indeed, for each $(u,v) \in \mathcal{M}_{\beta}$, it follows from \eqref{Juv} that
		\begin{equation}\label{limitacao-inferior}
			I_{\beta}(u,v) = I_{\beta}(u,v)-\frac{1}{4}J_{\beta}(u,v) = \frac{1}{8}\left(\|u\|_{2}^2+ \|v\|_{2}^2\right)^2 + \frac{(2p-3)}{4p}\psi_\beta(u,v).
		\end{equation}
		Since $p\geq2$, then $I_{\beta}(u,v)>0$. Hence, $c_{\beta}\in[0,\infty)$. Now, we consider the following technical result:
			
			\begin{lemma}\label{convergencia-t-1}
				For any $u\in H^1(\mathbb{R}^2)$, it holds that
				\begin{equation}\label{convergencia-t-1-norma-H1}
					\lim_{t \to 1}\|t^2{u}_t-{u}\|_{H^1(\mathbb{R}^2)}=0.
				\end{equation}
			\end{lemma}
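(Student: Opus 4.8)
The plan is to exploit the explicit scaling identities already established in the preceding lemma — namely \eqref{gradiente} and \eqref{norma-L2-mud-var}, which give $\|\nabla(t^2u_t)\|_2^2 = t^4\|\nabla u\|_2^2$ and $\|t^2u_t\|_2^2 = t^2\|u\|_2^2$ — together with the density of $C_0^\infty(\mathbb{R}^2)$ in $H^1(\mathbb{R}^2)$. First I would verify \eqref{convergencia-t-1-norma-H1} for a test function $\varphi \in C_0^\infty(\mathbb{R}^2)$: if $\mathrm{supp}\,\varphi \subset B_R$, then for $t$ in a bounded neighborhood of $1$ the functions $x \mapsto t^2\varphi(tx)$ and their gradients $x \mapsto t^3(\nabla\varphi)(tx) = \nabla(t^2\varphi_t)(x)$ are all supported in the fixed ball $B_{2R}$ and uniformly bounded, while converging pointwise to $\varphi$ and $\nabla\varphi$ respectively as $t \to 1$ (by continuity). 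The Dominated Convergence Theorem then yields $\|t^2\varphi_t - \varphi\|_2 \to 0$ and $\|\nabla(t^2\varphi_t) - \nabla\varphi\|_2 \to 0$, hence \eqref{convergencia-t-1-norma-H1} in this case.

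Next I would pass to a general $u \in H^1(\mathbb{R}^2)$ by an $\varepsilon$-type argument. Given $\varepsilon > 0$, choose $\varphi \in C_0^\infty(\mathbb{R}^2)$ with $\|u - \varphi\|_{H^1(\mathbb{R}^2)} < \varepsilon$ and set $w := u - \varphi$. Using the two scaling identities, $\|t^2w_t\|_{H^1(\mathbb{R}^2)}^2 = t^2\|w\|_2^2 + t^4\|\nabla w\|_2^2 \le C\|w\|_{H^1(\mathbb{R}^2)}^2 < C\varepsilon^2$ uniformly for $t$ in a suitably small neighborhood of $1$ (one may take $C \le 2$ after shrinking the neighborhood). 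By the triangle inequality,
$$
\|t^2u_t - u\|_{H^1(\mathbb{R}^2)} \le \|t^2w_t\|_{H^1(\mathbb{R}^2)} + \|t^2\varphi_t - \varphi\|_{H^1(\mathbb{R}^2)} + \|w\|_{H^1(\mathbb{R}^2)}.
$$
Letting $t \to 1$ and invoking the smooth case for the middle term gives $\limsup_{t\to 1}\|t^2u_t - u\|_{H^1(\mathbb{R}^2)} \le (1+\sqrt{C})\,\varepsilon$; since $\varepsilon$ is arbitrary, \eqref{convergencia-t-1-norma-H1} follows.

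The only point needing a little attention is the uniform bound on $\|t^2w_t\|_{H^1(\mathbb{R}^2)}$ for $t$ near $1$, but this is immediate from the scaling relations \eqref{gradiente} and \eqref{norma-L2-mud-var} already proved, so I do not anticipate a genuine obstacle here. As an alternative route, one can note that the same identities give $\|t^2u_t\|_{H^1(\mathbb{R}^2)}^2 = t^2\|u\|_2^2 + t^4\|\nabla u\|_2^2 \to \|u\|_{H^1(\mathbb{R}^2)}^2$ as $t \to 1$, and that $t^2u_t \rightharpoonup u$ weakly in $H^1(\mathbb{R}^2)$ (testing against $C_0^\infty$ functions and changing variables); since $H^1(\mathbb{R}^2)$ is a Hilbert space, the Radon--Riesz property then upgrades this to strong convergence. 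Either argument completes the proof.
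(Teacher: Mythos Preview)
Your proof is correct and follows essentially the same density strategy as the paper: approximate $u$ by a smooth function, handle the smooth case directly, and control the remainder. The paper organizes the splitting slightly differently --- it inserts a smooth approximation $U$ of $\nabla u$ (and separately $v$ of $u$) inside the integral and uses the elementary inequalities $(a+b)^2\le 2a^2+2b^2$ and $(a+b+c)^2\le 3(a^2+b^2+c^2)$ --- whereas you first isolate the smooth case and then use the scaling identities \eqref{gradiente}, \eqref{norma-L2-mud-var} to bound $\|t^2w_t\|_{H^1}$ uniformly; your version is a bit tidier but the underlying idea is the same. Your alternative via Radon--Riesz (weak convergence plus convergence of norms in the Hilbert space $H^1$) is a genuinely different and shorter route that the paper does not use; it sidesteps the density argument entirely at the cost of checking weak convergence, which is indeed immediate from boundedness and testing against $C_0^\infty$.
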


			\begin{proof}
				Let $\varepsilon>0$ be arbitrary. Since ${u} \in H^1(\mathbb{R}^2)$, by density,  there exist $U:\mathbb{R}^2 \to \mathbb{R}^2$, with $U$ smooth, and $v \in C^\infty(\mathbb{R}^2)$ such that
				\begin{equation}\label{des-norma-densidade-epsolon}
					\|\nabla{u}-U\|_2^2<\varepsilon \quad \mbox{and} \quad \|{u}-v\|_2^2<\varepsilon.
				\end{equation}
				Notice that
				$$
				\int_{\mathbb{R}^2}|\nabla \left(t^2{u}_t\right)-\nabla{u}|^2\dx = \int_{\mathbb{R}^2}\left| \left(\nabla \left(t^2{u}_t\right)-U\right)+\left(U-\nabla{u}\right)\right| ^2\dx.
				$$
				Using the inequality $(a+b)^2 \leq 2a^2+2b^2, \  \forall a,b\geq0$, with \eqref{des-norma-densidade-epsolon}, it holds that
				$$
				\int_{\mathbb{R}^2}|\nabla \left(t^2{u}_t\right)-\nabla{u}|^2\dx \leq 2\int_{\mathbb{R}^2}|t^3\nabla {u}(tx)-U(x)|^2\dx+2\varepsilon.
				$$
				By writing
				$$
				|t^3\nabla {u}(tx)-U(x)|^2 = \left|\left(t^3\nabla {u}(tx)-t^3U(tx)\right)+\left(t^3U(tx)-U(tx)\right)+\left(U(tx)-U(x)\right) \right|^2,
				$$
				we can use the estimate $(a+b+c)^2 \leq 3a^2+3b^2+3c^2, \  \forall a,b,c\geq0$, to obtain
				$$
				\begin{aligned}
					\int_{\mathbb{R}^2}|\nabla \left(t^2{u}_t\right)-\nabla{u}|^2\dx &\leq 6t^6\int_{\mathbb{R}^2}|\nabla {u}(tx)-U(tx)|^2\dx 
					\\
					&\quad+ 6|t^3-1|^2\int_{\mathbb{R}^2}|U(tx)|^2\dx+6\int_{\mathbb{R}^2}|U(tx)-U(x)|^2\dx.
				\end{aligned}
				$$
				Applying the change variables $y=tx$, we have that
				$$
				\begin{aligned}
					\int_{\mathbb{R}^2}|\nabla \left(t^2{u}_t\right)-\nabla{u}|^2\dx &\leq 6t^4\int_{\mathbb{R}^2}|\nabla {u}(y)-U(y)|^2\dy+6|t^3-1|^2t^{-2}\int_{\mathbb{R}^2}|U(y)|^2\dy
					\\
					&\quad +6\int_{\mathbb{R}^2}|U(tx)-U(x)|^2\dx,
				\end{aligned}
				$$
				which jointly with \eqref{des-norma-densidade-epsolon} imply that
				$$
				\int_{\mathbb{R}^2}|\nabla \left(t^2{u}_t\right)-\nabla{u}|^2\dx \leq 6\varepsilon+o(1),
				$$
				where $o(1)\to 0$, as $t\to1$.
				Using the same idea, we can conclude that
				$$
				\int_{\mathbb{R}^2}|t^2{u}_t-{u}|^2\dx \leq C_1\varepsilon+o(1),
				$$
				and therefore \eqref{convergencia-t-1-norma-H1} holds, which finishes the proof. 	
				\end{proof}
				
				For the next result, we shall discuss the behavior of the map $t \mapsto I_{\beta}\left(t^2u_t,t^2v_t\right)$.

				\begin{lemma}\label{desigualdade-top-ln}
					If $(u,v) \in \mathcal{M}_\beta$, then
					$$
					I_{\beta}\left(t^2u_t,t^2v_t\right) \leq 	I_\beta(u,v)-\frac{1-t^4+4t^4\log(t)}{16}\left(\|u\|_{2}^2+ \|v\|_{2}^2\right)^2, \quad \forall t>0.
					$$
				\end{lemma}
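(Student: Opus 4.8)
The plan is to work directly from the explicit expression \eqref{funcional-utvt} for the fiber map. Write $f(t):=I_{\beta}(t^2u_t,t^2v_t)$ and abbreviate $\mathcal{A}:=\|\nabla u\|_2^2+\|\nabla v\|_2^2$, $\mathcal{B}:=(\|u\|_2^2+\|v\|_2^2)^2$, $\mathcal{C}:=\mathcal{V}(u,v)$, $\mathcal{D}:=\psi_\beta(u,v)$, so that, using $\log(t^{-1})=-\log t$, formula \eqref{funcional-utvt} reads $f(t)=\tfrac{t^4}{2}\mathcal{A}-\tfrac{t^4\log t}{4}\mathcal{B}+\tfrac{t^4}{4}\mathcal{C}-\tfrac{t^{4p-2}}{2p}\mathcal{D}$, while $f(1)=I_\beta(u,v)=\tfrac12\mathcal{A}+\tfrac14\mathcal{C}-\tfrac1{2p}\mathcal{D}$. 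Subtracting and grouping the $\mathcal{A}$-- and $\mathcal{C}$--terms, which share the common factor $\tfrac{t^4-1}{4}$ against $2\mathcal{A}+\mathcal{C}$, one obtains
$$
f(t)-f(1)=\frac{t^4-1}{4}\bigl(2\mathcal{A}+\mathcal{C}\bigr)-\frac{t^4\log t}{4}\mathcal{B}-\frac{t^{4p-2}-1}{2p}\mathcal{D}.
$$

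Next I would invoke the hypothesis $(u,v)\in\mathcal{M}_\beta$, that is, $J_\beta(u,v)=0$. By \eqref{Juv} this reads $2\mathcal{A}-\tfrac14\mathcal{B}+\mathcal{C}-\tfrac{2p-1}{p}\mathcal{D}=0$, hence $2\mathcal{A}+\mathcal{C}=\tfrac14\mathcal{B}+\tfrac{2p-1}{p}\mathcal{D}$. Substituting this identity into the previous display and simplifying the coefficient of $\mathcal{B}$, namely $\tfrac{t^4-1}{16}-\tfrac{t^4\log t}{4}=-\tfrac{1-t^4+4t^4\log t}{16}$, yields
$$
f(t)-f(1)=-\frac{1-t^4+4t^4\log t}{16}\,\mathcal{B}+\left[\frac{(2p-1)(t^4-1)}{4p}-\frac{t^{4p-2}-1}{2p}\right]\mathcal{D}.
$$
Since $\mathcal{B}\ge0$ and $\mathcal{D}=\psi_\beta(u,v)\ge0$ (because $\beta\ge0$), the assertion of the lemma is thus equivalent to showing that the bracketed coefficient of $\mathcal{D}$ is nonpositive, which is a one-variable inequality.

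Finally I would multiply that bracket by $4p>0$ and set $h(t):=(2p-1)t^4-2t^{4p-2}-(2p-3)$, so that the claim becomes $h(t)\le0$ for all $t>0$. One checks $h(1)=0$ and
$$
h'(t)=4(2p-1)t^3-2(4p-2)t^{4p-3}=4(2p-1)t^3\bigl(1-t^{4p-6}\bigr);
$$
since $p\ge2$ forces $4p-6\ge2>0$ and $2p-1>0$, the factor $1-t^{4p-6}$ is positive on $(0,1)$ and negative on $(1,\infty)$, so $h$ increases on $(0,1)$ and decreases on $(1,\infty)$, attaining its global maximum $h(1)=0$ at $t=1$. Hence $h(t)\le0$ for every $t>0$, which completes the argument.

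I do not expect a genuine obstacle here: the proof is a bookkeeping computation followed by an elementary single-variable estimate. The only points requiring care are carrying the logarithmic term $t^4\log(t^{-1})$ correctly through the algebra and noticing that the assumption $p\ge2$ is exactly what makes $h'$ change sign once, at $t=1$; the structural idea is that membership in $\mathcal{M}_\beta$ lets one trade the kinetic and nonlocal terms for $\mathcal{B}$ and $\mathcal{D}$, after which the logarithmic Pohozaev-type term emerges with precisely the claimed coefficient.
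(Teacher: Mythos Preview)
Your proof is correct and follows essentially the same approach as the paper: both compute $I_\beta(u,v)-I_\beta(t^2u_t,t^2v_t)$ from \eqref{funcional-utvt}, use $J_\beta(u,v)=0$ to eliminate the combination $2\mathcal{A}+\mathcal{C}$, and then reduce to a one-variable inequality whose proof is the same derivative analysis (the paper's $g(t)$ is exactly $-\tfrac{1}{4p}h(t)$). One cosmetic remark: the observation ``$\mathcal{B}\ge0$'' plays no role, since the $\mathcal{B}$-term appears with identical coefficient on both sides of the claimed inequality and cancels; only $\mathcal{D}\ge0$ together with $h(t)\le0$ is needed.
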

				\begin{proof}
					In view of \eqref{funcional-utvt}, we have
					$$
					\begin{aligned}
						I_\beta(u,v)-I_{\beta}\left(t^2u_t,t^2v_t\right)&=\frac{1-t^4}{2}\left(\|\nabla u\|_{2}^2+\|\nabla v\|_{2}^2\right)+\frac{1-t^4}{4}\mathcal{V}(u,v)
						\\
						&\quad-\frac{t^4\log(t^{-1})}{4}\left(\|u\|_{2}^2+ \|v\|_{2}^2\right)^2+\left(-\frac{1}{2p}+\frac{t^{4p-2}}{2p}\right)\psi_\beta(u,v).
					\end{aligned}
					$$
					On the other hand, by \eqref{Juv} one deduces
					$$
					\begin{aligned}
						0&=\frac{1-t^4}{4}J_{\beta}(u,v) \\
                        &= \frac{1-t^4}{2}\left(\|\nabla u\|_{2}^2+\|\nabla v\|_{2}^2\right)-\frac{1-t^4}{16}\left(\|u\|_{2}^2+ \|v\|_{2}^2\right)^2
						\\
						&\quad+\frac{1-t^4}{4}\mathcal{V}(u,v)-\frac{(1-t^4)}{4}\frac{(4p-2)}{2p}\psi_\beta(u,v).
					\end{aligned}
					$$
					Hence,
					$$
					\begin{aligned}
						\frac{1-t^4}{2}\left(\|\nabla u\|_{2}^2+\|\nabla v\|_{2}^2\right)+\frac{1-t^4}{4}\mathcal{V}(u,v)&=\frac{1-t^4}{16}\left(\|u\|_{2}^2+ \|v\|_{2}^2\right)^2
						\\
						&\quad+\frac{(1-t^4)}{4}\frac{(4p-2)}{2p}\psi_\beta(u,v).
					\end{aligned}
					$$
					This expression combined with the first equality, infer that
					\begin{equation}\label{subtracao-I-It2}
						\begin{aligned}
							I_\beta(u,v)-I_{\beta}\left(t^2u_t,t^2v_t\right)&=\frac{1-t^4+4t^4\log(t)}{16}\left(\|u\|_{2}^2+ \|v\|_{2}^2\right)^2
							\\
							&\quad+\left[\frac{(1-t^4)}{4}\frac{(4p-2)}{2p}-\frac{1}{2p}+\frac{t^{4p-2}}{2p}\right]\psi_\beta(u,v).
						\end{aligned}
					\end{equation}
					Consider the function $g:(0,\infty) \to \mathbb{R}$ given by
					$$
					g(t):=\frac{(1-t^4)}{4}\frac{(4p-2)}{2p}-\frac{1}{2p}+\frac{t^{4p-2}}{2p}=\frac{1}{8p}\left((1-t^4)(4p-2)-4+4t^{4p-2}\right).
					$$
					Note that
					$$
					g'(t)=\frac{1}{8p}\left(-4t^3(4p-2)+4(4p-2)t^{4p-3}\right).
					$$
					Since $p\geq 2 \geq3/2$, it follows that $g'(t)>0$, if $t>1$ and $g'(t)<0$, when $t<1$. As $g(1)=0$, we get $g(t)\geq0$, for any $t>0$, because $1$ is a minimum point of $g$. This condition together with \eqref{subtracao-I-It2} complete the proof.
					\end{proof}
					
			\begin{remark}\label{funcao-positiva} Consider the function $h: (0 , \infty)  \to [0, \infty)$ given by $h(t):=1-t^4+4t^4\log(t)$, $t > 0$. Notice that
				$$
				h(t)\geq0, \quad \forall t>0.
				$$
				In fact, we observe that
				$$
				h'(t)=-4t^3+16t^3\log(t)+4t^3=16t^3\log(t).
				$$
				Consequently, $h'(t)<0$, for $t<1$ and $h'(t)>0$, if $t>1$. Hence, $h(t) \geq 0$ for each $t > 0$, since $h(1)=0$ and 1 is a minimum point of $h$. In particular, we infer that 
				\begin{equation*}\label{h-sem-1}
					h(t) = 1-t^4+4t^4\log(t)>0, \quad \forall t>0 \ (t\neq1).
				\end{equation*}
			\end{remark}
					
    \section{Proof of Theorem~\ref{existencia-de-solucao}}\label{Sec-Teo-existencia}
		
		In order to show that $\mathcal{M}_\beta$ is a natural constraint, the Quantitative Deformation Lemma \cite[Lemma 2.3]{Willem} plays a crucial role in our arguments. Namely, we consider the following result:

					\begin{lemma}[Quantitative Deformation Lemma]\label{lema-deformacao}
						Let $X$ be a Banach space, $I \in C^1(X,\mathbb{R})$, $S \subset X$, $c \in \mathbb{R}$,  $\varepsilon,\delta>0$ such that
						$$
						\|I'(u)\|\geq \frac{8\varepsilon}{\delta}, \quad\forall u \in I^{-1}\left([c-2\varepsilon,c+2\varepsilon]\right)\cap S_{2\delta},
						$$
						where 
						$$
						S_{2\delta}:=\left\lbrace u \in X:d(u,S):=\inf_{y \in S}\|u-y\|\leq 2\delta\right\rbrace.
						$$
						Then, there exists $\eta \in C\left([0,1]\times X,X\right)$ such that
						\begin{itemize}
							\item [$(i)$] $\eta(t,u)=u$, if $t=0$ or if $u \notin I^{-1}\left([c-2\varepsilon,c+2\varepsilon]\right)\cap S_{2\delta}$;
							
							\item [$(ii)$] $\eta\left(1,I^{c+\varepsilon}\cap S\right) \subset I^{c-\varepsilon}$, where $I^d:=\{u \in X: I(u)\leq d\}$, $\forall d \in \mathbb{R}$;
							
							\item [$(iii)$] $\eta(t,\cdot)$ is an homeomorphism of $X$, for each $t \in [0,1]$;
							
							\item[$(iv)$] $\|\eta(t,u)-u\|\leq \delta$, $\forall u \in X$, $\forall t \in[0,1]$;
							
							\item[$(v)$] $I\left(\eta(\cdot,u)\right)$ is non increasing, for each $u \in X$;
							
							\item[$(vi)$] $I\left(\eta(t,u)\right)<c$, $\forall u \in I^c\cap S_\delta$, $\forall t \in (0,1]$.
						\end{itemize}
					\end{lemma}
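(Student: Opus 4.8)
This is the classical quantitative deformation lemma (see \cite{Willem}), so the plan is to follow the standard pseudo-gradient flow construction. First I would invoke the existence of a pseudo-gradient vector field: since $I\in C^1(X,\mathbb{R})$, on the open set $\tilde X:=\{u\in X:I'(u)\neq0\}$ there is a locally Lipschitz map $v\colon\tilde X\to X$ with
\[
\|v(u)\|<2\|I'(u)\|,\qquad \langle I'(u),v(u)\rangle>\|I'(u)\|^2,\qquad \forall\,u\in\tilde X,
\]
which together force $\|I'(u)\|<\|v(u)\|<2\|I'(u)\|$ and $\langle I'(u),v(u)\rangle/\|v(u)\|^2>1/4$. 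Setting $A:=I^{-1}([c-2\varepsilon,c+2\varepsilon])\cap S_{2\delta}$ and $B:=I^{-1}([c-\varepsilon,c+\varepsilon])\cap S_{\delta}$, the hypothesis $\|I'(u)\|\ge 8\varepsilon/\delta>0$ on $A$ shows $A\subset\tilde X$, so $v$ is well defined, and $v/\|v\|^2$ remains locally bounded on a neighbourhood of the closed set $A$.

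Next I would localize. Using the distance function I would pick a locally Lipschitz $\chi\colon X\to[0,1]$ with $\chi\equiv1$ on $B$ and $\chi\equiv0$ on $X\setminus A$, for instance $\chi(u)=d(u,X\setminus A)/(d(u,X\setminus A)+d(u,B))$, and define
\[
f(u):=\begin{cases}-\,\chi(u)\,v(u)/\|v(u)\|^2,& u\in A,\\ 0,& u\notin A.\end{cases}
\]
Then $f$ is locally Lipschitz on all of $X$ and globally bounded by $\delta/(8\varepsilon)$ (it is zero off $A$ and, on $A$, $\|f(u)\|\le1/\|v(u)\|<1/\|I'(u)\|\le\delta/(8\varepsilon)$). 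Since $f$ is locally Lipschitz and bounded, the Cauchy problem $\partial_t\sigma(t,u)=f(\sigma(t,u))$, $\sigma(0,u)=u$, has, for each $u$, a unique global solution $\sigma\in C(\mathbb{R}\times X,X)$ with each $\sigma(t,\cdot)$ a homeomorphism of $X$ (inverse $\sigma(-t,\cdot)$); I would then set $\eta(t,u):=\sigma(8\varepsilon t,u)$.

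The verification of $(i)$–$(vi)$ then reduces to flow estimates. Property $(i)$ follows because $f\equiv0$ off $A$ (so $\sigma(\cdot,u)\equiv u$ there) and $\sigma(0,\cdot)=\mathrm{id}$; $(iii)$ is the homeomorphism property of the flow; $(iv)$ follows from $\|\eta(t,u)-u\|\le\int_0^{8\varepsilon t}\|f(\sigma(s,u))\|\,\mathrm{d}s\le\delta t\le\delta$. Along the flow one computes
\[
\frac{\mathrm{d}}{\mathrm{d}s}I(\sigma(s,u))=\big\langle I'(\sigma(s,u)),f(\sigma(s,u))\big\rangle=-\,\chi(\sigma(s,u))\,\frac{\langle I'(\sigma),v(\sigma)\rangle}{\|v(\sigma)\|^2}\le-\tfrac14\,\chi(\sigma(s,u))\le0,
\]
which yields $(v)$; and $(vi)$ follows since for $u\in I^c\cap S_\delta$ with $I(u)=c$ one has $u\in B$, $\chi(u)=1$, so the derivative above is $\le-1/4<0$ at $s=0$, and a continuity argument (keeping $\chi(\sigma(s,u))\ge1/2$ for small $s$) gives $I(\sigma(s,u))<c$ for all $s>0$, hence $I(\eta(t,u))<c$ for $t\in(0,1]$. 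Finally, for $(ii)$, take $u\in I^{c+\varepsilon}\cap S$: if $I(\sigma(s_0,u))<c-\varepsilon$ for some $s_0\in[0,8\varepsilon]$, monotonicity gives $I(\eta(1,u))<c-\varepsilon$; otherwise $I(\sigma(s,u))\in[c-\varepsilon,c+\varepsilon]$ and $d(\sigma(s,u),S)\le\|\sigma(s,u)-u\|\le\delta$ for all $s\in[0,8\varepsilon]$, so $\sigma(s,u)\in B$, $\chi(\sigma(s,u))\equiv1$, and integrating the displayed identity over $[0,8\varepsilon]$ gives $I(\eta(1,u))\le(c+\varepsilon)-2\varepsilon=c-\varepsilon$, i.e. $\eta(1,u)\in I^{c-\varepsilon}$.

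The main obstacle — and the only genuinely nontrivial point — is Step~1: constructing the locally Lipschitz pseudo-gradient field, which requires a locally finite partition of unity subordinate to a suitable open cover of $\tilde X$ (this is where the metric structure of $X$ enters), together with the check that the cut-off field $f$ stays locally Lipschitz across $\partial A$; the latter is guaranteed precisely because the hypothesis keeps $\|I'\|$ bounded below on $A$. All the remaining steps are routine integrations along the flow. Since this is a textbook result, one may alternatively simply cite \cite{Willem}.
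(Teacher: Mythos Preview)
Your proof is correct and is precisely the standard pseudo-gradient flow argument from \cite[Lemma~2.3]{Willem}. Note, however, that the paper does not prove this lemma at all: it merely states it and cites Willem, then uses the deformation $\eta$ as a black box in the proof of Proposition~\ref{esquema}. So there is nothing to compare---your write-up supplies exactly the argument the paper chose to outsource, and your closing remark that one may simply cite \cite{Willem} is in fact what the authors do.
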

					
					In what follows, we shall prove that any critical point for the functional $I_\beta$ restricted to $\mathcal{M}_\beta$ is a critical point for $I_\beta$. This can be summarized as follows:

					\begin{proposition}\label{esquema}
						If $(\overline{u},\overline{v}) \in \mathcal{M}_\beta$ is such that $I_\beta(\overline{u},\overline{v})=c_\beta$, then $I_\beta'(\overline{u},\overline{v})=0$.
					\end{proposition}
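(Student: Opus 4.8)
The plan is to argue by contradiction: suppose $(\overline{u},\overline{v})\in\mathcal{M}_\beta$ realizes $c_\beta$ but $I_\beta'(\overline{u},\overline{v})\neq 0$. The strategy is the classical one of combining a local deformation (via Lemma~\ref{lema-deformacao}) with the fiber-map projection of Lemma~\ref{projecao} to produce a path whose elements lie on $\mathcal{M}_\beta$ and have energy strictly below $c_\beta$, contradicting the definition of $c_\beta$.

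First I would fix notation and set up the deformation data. By continuity of $I_\beta'$, if $I_\beta'(\overline{u},\overline{v})\neq 0$ there exist $\varepsilon_0,\delta>0$ such that $\|I_\beta'(z,w)\|\geq 8\varepsilon_0/\delta$ for all $(z,w)$ in a $2\delta$-ball around $(\overline{u},\overline{v})$; shrinking further we may assume this ball avoids $(0,0)$. Set $S$ to be a small neighborhood of $(\overline{u},\overline{v})$ (e.g. the open $\delta$-ball) and take $c=c_\beta$. The key geometric input is that, because $t\mapsto I_\beta(t^2u_t,t^2v_t)$ attains its strict maximum at $t=1$ on $\mathcal{M}_\beta$ (Lemma~\ref{projecao}, together with Remark~\ref{funcao-positiva} giving strictness via $h(t)>0$ for $t\neq1$, and Lemma~\ref{desigualdade-top-ln}), there is $\rho>0$ such that for $t$ with $|t-1|\geq$ some fixed small amount, $I_\beta(t^2\overline{u}_t,t^2\overline{v}_t)\leq c_\beta-\rho$. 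Using Lemma~\ref{convergencia-t-1}, the curve $t\mapsto(t^2\overline{u}_t,t^2\overline{v}_t)$ is continuous into $W^\lambda\times W^\lambda$ and passes through $(\overline{u},\overline{v})$ at $t=1$, so for $t$ near $1$ it stays inside $S$.

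Next I would choose $\varepsilon\in(0,\varepsilon_0)$ small enough (in particular $2\varepsilon<\rho$ and $2\varepsilon$ small relative to how fast the fiber energy drops off), apply Lemma~\ref{lema-deformacao} to obtain the deformation $\eta$, and define the competitor path $\gamma(t):=\eta\big(1,(t^2\overline{u}_t,t^2\overline{v}_t)\big)$ for $t$ ranging over a small closed interval $[1-a,1+a]$. Property~(ii) of $\eta$ pushes the energy of the near-$t=1$ piece below $c_\beta-\varepsilon$, while property~(i) leaves the outer pieces (where the energy is already $\leq c_\beta-\rho<c_\beta-2\varepsilon$) untouched; property~(iv) keeps the whole path near $(\overline{u},\overline{v})$. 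Thus $\max_{t}I_\beta(\gamma(t))<c_\beta$. Then I would run the usual topological/degree argument: define $\Phi(t):=J_\beta(\gamma(t))$ (or rather $J_\beta$ evaluated on $\gamma(t)$), show via $J_\beta(s^2u_s,s^2v_s)=sf'(s)$ from \eqref{igualdade-J-f} that $J_\beta(t^2\overline{u}_t,t^2\overline{v}_t)$ changes sign across $t=1$ (it is positive for $t<1$ and negative for $t>1$, since $f'$ is decreasing through its unique zero), and since $\eta$ acts as the identity near the endpoints of $[1-a,1+a]$, $\Phi$ also changes sign; by the intermediate value theorem $\gamma(t^*)\in\mathcal{M}_\beta$ for some $t^*$. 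This gives a point of $\mathcal{M}_\beta$ with energy strictly less than $c_\beta$, the desired contradiction. Hence $I_\beta'(\overline{u},\overline{v})=0$.

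The main obstacle is making the sign-change argument for $J_\beta\circ\gamma$ rigorous: one must verify that the deformation $\eta$ does not destroy the sign change near the endpoints, which forces a careful ordering of the small parameters ($a$ chosen first so the fiber energy at $t=1\pm a$ is below $c_\beta-2\varepsilon$ and $J_\beta$ is strictly of one sign there, then $\varepsilon$ chosen small enough that property~(i) of $\eta$ applies at those endpoints). A secondary technical point is confirming continuity of $t\mapsto(t^2\overline{u}_t,t^2\overline{v}_t)$ in the full $W^\lambda$-norm, not merely in $H^1$; Lemma~\ref{convergencia-t-1} handles the $H^1$ part, and the weighted term $\|\cdot\|_{\lambda,*}$ must be controlled separately by a change-of-variables estimate, using $\log(\lambda+|x|)\leq\log(\lambda+t^{-1}|y|)$-type bounds for $t$ near $1$.
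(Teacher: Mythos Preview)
Your proposal is correct and follows essentially the same route as the paper: contradiction, deformation lemma applied on a small ball around $(\overline{u},\overline{v})$, the fiber path $t\mapsto (t^2\overline{u}_t,t^2\overline{v}_t)$, energy reduction via item~(ii) near $t=1$ and via Lemma~\ref{desigualdade-top-ln} for $|t-1|\geq\delta_1$, and then the intermediate value theorem applied to $t\mapsto J_\beta(\eta(1,t^2\overline{w}_t))$ using that $\eta$ is the identity at the endpoints. The paper resolves your ``secondary technical point'' not by proving $W^\lambda$-continuity of the fiber map, but by defining the set $S$ (and the $2\delta$-neighborhood) via the $H^1(\mathbb{R}^2)\times H^1(\mathbb{R}^2)$-distance rather than the $W^\lambda$-norm; then Lemma~\ref{convergencia-t-1} is exactly what is needed, and no separate control of $\|\cdot\|_{\lambda,*}$ is required.
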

					
					\begin{proof}
						Considering $\overline{w}=(\overline{u},\overline{v})$, suppose by contradiction that  $I_\beta'(\overline{w})\neq0$. Thus, there exist $\delta,\rho>0$ such that
						\begin{equation}\label{implicacao-derivada}
							\|w-\overline{w}\|_{H^1(\mathbb{R}^2) \times H^1(\mathbb{R}^2)}\leq 3\delta \quad \Longrightarrow \quad \|I'_\beta(u,v)\|\geq \rho.
						\end{equation}
						Recall that if $w=(u,v) \in H^1(\mathbb{R}^2) \times H^1(\mathbb{R}^2)$, then 
						$$
						\|w-\overline{w}\|_{H^1(\mathbb{R}^2) \times H^1(\mathbb{R}^2)}=\|u-\overline{u}\|_{H^1(\mathbb{R}^2)}+\|v-\overline{v}\|_{H^1(\mathbb{R}^2)}.
						$$
			Define
			$$
			S:=\overline{B(\overline{w},\delta)}:=\{w=(u,v) \in W^\lambda\times W^\lambda:	\|w-\overline{w}\|_{H^1(\mathbb{R}^2) \times H^1(\mathbb{R}^2)}\leq \delta\}.
			$$ 			
			Let $w \in I_\beta^{-1}\left([c_\beta-2\varepsilon,c_\beta+2\varepsilon]\right)\cap S_{2\delta}$, with $\varepsilon>0$ to be chosen later.	Let $y \in S$ be such that $\|w-y\|_{H^1(\mathbb{R}^2) \times H^1(\mathbb{R}^2)}=d(w,S)\leq 2\delta$. Thus,		
					$$
					\|w-\overline{w}\|_{H^1(\mathbb{R}^2) \times H^1(\mathbb{R}^2)} \leq \|\overline{w}-y\|_{H^1(\mathbb{R}^2) \times H^1(\mathbb{R}^2)} + \|y-w\|_{H^1(\mathbb{R}^2) \times H^1(\mathbb{R}^2)} \leq \delta+2\delta=3\delta. 
					$$
						Choosing $0<\varepsilon\leq \rho \delta/8$, we can apply \eqref{implicacao-derivada} to get
						$$
						\|I_\beta'(w)\|\geq\rho\geq \frac{8\varepsilon}{\delta}, \quad\forall w \in I_\beta^{-1}\left([c_\beta-2\varepsilon,c_\beta+2\varepsilon]\right)\cap S_{2\delta}.
						$$
						Therefore, we are able to use the items of Lemma~\ref{lema-deformacao}.
						
						Next, we apply Lemma~\ref{convergencia-t-1} to obtain $\delta_1>0$ such that
						$$
						0<|t-1|<\delta_1 \quad \Longrightarrow \quad \|t^2\overline{w}_t-\overline{w}\|_{H^1(\mathbb{R}^2) \times H^1(\mathbb{R}^2)}<\delta \ \ (t^2\overline{w}_t \in S),
						$$
						where $\overline{w}_t=(\overline{u}_t,\overline{v}_t)$. 
						By Lemma~\ref{desigualdade-top-ln} and Remark~\ref{funcao-positiva},
						$$
						I_\beta(t^2\overline{w}_t) \leq I_\beta(\overline{w})=c_\beta\leq c_\beta+\varepsilon,
						$$
				that is $t^2\overline{w}_t \in I_\beta^{c_\beta+\varepsilon}$. Thus, using item $(ii)$ of Lemma~\ref{lema-deformacao}, it follows that
						\begin{equation}\label{implicacao-menor-que-1}
							0<|t-1|<\delta_1 \quad \Longrightarrow \quad I_\beta\left(\eta(1,t^2\overline{w}_t)\right)\leq c_\beta-\varepsilon.
						\end{equation}
						
					On other hand, combining items $(v)$ and $(i)$ of Lemma~\ref{lema-deformacao} together with Lemma~\ref{desigualdade-top-ln}, there holds	
					$$
					\begin{aligned}
						I_\beta\left(\eta(1,t^2\overline{w}_t)\right) & \leq I_\beta\left(\eta(0,t^2\overline{w}_t)\right)\\ &= I_\beta(t^2\overline{w}_t) 
						\\
						&\leq I_\beta(\overline{w})-\frac{1-t^4+4t^4\log(t)}{16}\left(\|\overline{u}\|_{2}^2+ \|\overline{v}\|_{2}^2\right)^2.
					\end{aligned}
					$$
					In view of Remark~\ref{funcao-positiva},
					$$
					|t-1|\geq\delta_1 \quad \Longrightarrow \quad 0<\xi:=\min_{s \in (1-\delta_1,1+\delta_1) \setminus\{1\}}\{h(s)\}\leq h(t).
					$$ 
					As consequence,
					$$
					|t-1|\geq\delta_1 \quad \Longrightarrow \quad I_\beta\left(\eta(1,t^2\overline{w}_t)\right) < c_\beta-\frac{\xi}{16}\left(\|\overline{u}\|_{2}^2+ \|\overline{v}\|_{2}^2\right)^2.
					$$
					By taking $\varepsilon=\min\{\rho \delta/8,(\xi/32)\left(\|\overline{u}\|_{2}^2+ \|\overline{v}\|_{2}^2\right)^2\}$, we can conclude that
					\begin{equation}\label{implicacao-maior-que-1}
						|t-1|\geq\delta_1 \quad \Longrightarrow \quad I_\beta\left(\eta(1,t^2\overline{w}_t)\right) < c_\beta-2\varepsilon.
					\end{equation}
					
					Now, recalling that in Lemma~\ref{projecao}, since $(\overline{u},\overline{v}) \in \mathcal{M}_\beta$, the function $f(t)=I_{\beta}(t^2\overline{u}_t,t^2\overline{v}_t)$ satisfy $f'(1)=0$, $f'(t)>0$ for $t\in(0,1)$ and $f'(t)<0$ for $t\in(1,\infty)$, by \eqref{igualdade-J-f} we get
					\begin{equation}\label{des-J-beta-t1-t2}
						J_{\beta}(t_1^2\overline{w}_{t_1})>0 \quad \mbox{and}  \quad J_{\beta}(t_2^2\overline{w}_{t_2})<0,
					\end{equation}
					with $t_1\in(0,1)$ and $t_2\in(1,\infty)$ such that $|t_1-1|,|t_2-1|\geq\delta_1$. Now, notice that combining \eqref{implicacao-menor-que-1} with \eqref{implicacao-maior-que-1}, we have
					\begin{equation}\label{maximo-c}
						\max_{t \in [t_1,t_2]}I_\beta\left(\eta(1,t^2\overline{w}_t)\right)<c_\beta.
					\end{equation}
					Finally, define $\Psi(t):=J_\beta\left(\eta(1,t^2\overline{w}_t)\right)$, for any $t>0$. Using \eqref{implicacao-maior-que-1} and item $(i)$ of Lemma~\ref{lema-deformacao}, $\eta(1,t_1^2\overline{w}_{t_1})=t_1^2\overline{w}_{t_1}$ and $\eta(1,t_1^2\overline{w}_{t_2})=t_2^2\overline{w}_{t_2}$. From \eqref{des-J-beta-t1-t2},
					$$
					\Psi(t_2)<0<\Psi(t_1).
					$$
					Since $\Psi$ is continuous, there exists $r\in(t_1,t_2)$ such that $\Psi(r)=J_\beta\left(\eta(1,r^2\overline{w}_r)\right)=0$.  Consequently, $\eta(1,r^2\overline{w}_r) \in \mathcal{M}_\beta$. So, by \eqref{maximo-c} we obtain
					$$
					c_\beta \leq I_\beta\left(\eta(1,r^2\overline{w}_r)\right)\leq\max_{t \in [t_1,t_2]}I_\beta\left(\eta(1,t^2\overline{w}_t)\right)<c_\beta, 
					$$
					which is a contradiction. The proof is completed.
					\end{proof}

	\subsection{Existence of solution} Now, we prove that any minimizer sequence for $I_\beta$ restricted to $\mathcal{M}_\beta$ is bounded. This can be done thanks to Gagliardo-Nirenberg inequality together with some fine estimates. Specifically, we are able to prove the following result: 

	\begin{lemma}\label{limitacao-H1}
	If $\{(u_n,v_n)\}_n\subset\mathcal{M}_\beta$ is a minimizing
	sequence for $c_\beta$, then the sequences $\{u_n\}_n$ and $\{v_n\}_n$ are bounded in the norm $\|\cdot\|_{H^1(\mathbb{R}^2)}$.
\end{lemma}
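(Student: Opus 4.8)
The plan is to read off the $L^2$ and lower-order bounds directly from the manifold identity \eqref{limitacao-inferior}, and then to recover the Dirichlet bound from the constraint $J_\beta=0$, the only delicate point being the sign-indefinite nonlocal term.

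First, recall \eqref{limitacao-inferior}: for every $(u,v)\in\mathcal{M}_\beta$,
\[
I_\beta(u,v)=\frac{1}{8}\left(\|u\|_{2}^2+\|v\|_{2}^2\right)^2+\frac{2p-3}{4p}\,\psi_\beta(u,v).
\]
Since $p\ge 2$, both terms on the right are nonnegative, and since $I_\beta(u_n,v_n)\to c_\beta<\infty$, each of them stays bounded along the sequence. In particular $\|u_n\|_2$ and $\|v_n\|_2$ are bounded, and $\psi_\beta(u_n,v_n)$ is bounded; by \eqref{psi} this also bounds $\|u_n\|_{2p}^{2p}$, $\|v_n\|_{2p}^{2p}$ and $\int_{\mathbb{R}^2}|u_nv_n|^p\dx$. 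It remains only to bound $\|\nabla u_n\|_2$ and $\|\nabla v_n\|_2$.

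For this I would use that $(u_n,v_n)\in\mathcal{M}_\beta$, i.e. $J_\beta(u_n,v_n)=0$. Rearranging \eqref{Juv},
\[
2\left(\|\nabla u_n\|_{2}^2+\|\nabla v_n\|_{2}^2\right)=\frac{1}{4}\left(\|u_n\|_{2}^2+\|v_n\|_{2}^2\right)^2-\mathcal{V}(u_n,v_n)+\frac{4p-2}{2p}\,\psi_\beta(u_n,v_n).
\]
The first and third terms on the right are already bounded by the previous step; the problematic term is $-\mathcal{V}(u_n,v_n)$, which need not have a sign. Here I would invoke the splitting \eqref{decomposicao}, $\mathcal{V}=\mathcal{V}_1-\mathcal{V}_2$. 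Since $\lambda>e^{1/4}>1$ we have $\log(\lambda+|x-y|)\ge\log\lambda>0$, so $\mathcal{V}_1(u_n,v_n)\ge 0$ and therefore $-\mathcal{V}(u_n,v_n)=-\mathcal{V}_1(u_n,v_n)+\mathcal{V}_2(u_n,v_n)\le \mathcal{V}_2(u_n,v_n)$. The crux is then to estimate $\mathcal{V}_2(u_n,v_n)$ by a quantity growing at most linearly in $\|\nabla u_n\|_2+\|\nabla v_n\|_2$. Starting from the middle line of \eqref{desigualdadeV2}, namely $\mathcal{V}_2(u,v)\le C\left(\|u\|_{8/3}^4+2\|u\|_{8/3}^2\|v\|_{8/3}^2+\|v\|_{8/3}^4\right)$, I would apply the Gagliardo--Nirenberg inequality in the plane, $\|w\|_{8/3}\le C\|w\|_2^{3/4}\|\nabla w\|_2^{1/4}$, which gives $\|w\|_{8/3}^4\le C\|w\|_2^3\|\nabla w\|_2$; since $\|u_n\|_2,\|v_n\|_2$ are bounded, Young's inequality applied to the mixed term yields
\[
\mathcal{V}_2(u_n,v_n)\le C_1+C_2\left(\|\nabla u_n\|_2+\|\nabla v_n\|_2\right)
\]
with $C_1,C_2$ independent of $n$. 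Substituting this into the displayed rearrangement of $J_\beta=0$ and setting $A_n:=\|\nabla u_n\|_2^2+\|\nabla v_n\|_2^2$, and using $\|\nabla u_n\|_2+\|\nabla v_n\|_2\le\sqrt{2A_n}$, one arrives at an inequality of the form $2A_n\le C_3+C_4\sqrt{A_n}$, which forces $A_n$ to be bounded. Combined with the $L^2$ bounds, this shows that $\{u_n\}_n$ and $\{v_n\}_n$ are bounded in $H^1(\mathbb{R}^2)$.

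The main obstacle is exactly the sign-indefinite nonlocal term: one cannot bound $|\mathcal{V}(u_n,v_n)|$ crudely, since that would bring in the weighted norm $\|\cdot\|_{\lambda,*}$, which is not under control and would make the argument circular; the argument must discard the nonnegative part $\mathcal{V}_1$ and estimate only $\mathcal{V}_2$. It is essential that the Hardy--Littlewood--Sobolev estimate together with Gagliardo--Nirenberg produces precisely the \emph{first} power of the gradient norm, so that the resulting inequality $2A_n\le C_3+C_4\sqrt{A_n}$ closes and delivers a uniform bound.
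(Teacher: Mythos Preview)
Your argument is correct and follows essentially the same route as the paper: bound $\|\cdot\|_2$ and $\psi_\beta$ via \eqref{limitacao-inferior}, then use $J_\beta=0$ together with the splitting $\mathcal{V}=\mathcal{V}_1-\mathcal{V}_2$, discard $\mathcal{V}_1\ge 0$, and control $\mathcal{V}_2$ by Hardy--Littlewood--Sobolev plus Gagliardo--Nirenberg. The only cosmetic difference is the absorption step: the paper applies Young's inequality with a parameter to write $\|u_n\|_2^{3}\|\nabla u_n\|_2\le C(\varepsilon)\|u_n\|_2^6+\varepsilon\|\nabla u_n\|_2^2$ and chooses $\varepsilon$ small, whereas you exploit the first power of the gradient directly and close via the subquadratic inequality $2A_n\le C_3+C_4\sqrt{A_n}$; both conclude the bound on $\|\nabla u_n\|_2^2+\|\nabla v_n\|_2^2$ equally well.
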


	\begin{proof}
	Firstly, by using  \eqref{limitacao-inferior}, we infer that
	\begin{equation}\label{estimativa-sequencia}
		c_\beta + o_n(1) = I_\beta(u_n,v_n) \geq \frac{1}{8}\left(\|u_n\|_{2}^2+ \|v_n\|_{2}^2\right)^2+ \frac{2p-3}{4p}\psi_\beta(u_n,v_n),
	\end{equation}
	where $o_n(1)\to 0$, as $n\to\infty$. Since $p\geq2\geq3/2$, we can conclude that  $\{\|u_n\|_2\}_n$ and $\{\|v_n\|_2\}_n$ are bounded.
	
	Now, we prove that $\{\|\nabla u_n\|_2\}_n$ and $\{\|\nabla v_n\|_2\}_n$ are also bounded. Using \eqref{Juv}, we can compute
	\begin{equation}\label{igualdade-cbeta}
		\begin{aligned}
			c_\beta + o_n(1) &= I_\beta(u_n,v_n)-\frac{1}{8}J_\beta(u_n,v_n) \\
			&= \frac{1}{4}\left(\|\nabla u_n\|_{2}^2+\|\nabla v_n\|_{2}^2\right)+\frac{1}{32}\left(\|u_n\|_{2}^2+ \|v_n\|_{2}^2\right)^2
			+ \frac{1}{8}\mathcal{V}(u_n,v_n) 
			\\
			&\quad+ \frac{(4p-10)}{16p}\psi_\beta(u_n,v_n).
		\end{aligned}
	\end{equation}
	Consequently,
	$$
	c_\beta + o_n(1) + \frac{(10-4p)}{16p}\psi_\beta(u_n,v_n) \geq \frac{1}{4}\left(\|\nabla u_n\|_{2}^2+\|\nabla v_n\|_{2}^2\right)  + \frac{1}{8}\mathcal{V}(u_n,v_n).
	$$
From \eqref{estimativa-sequencia}, the sequence $\{\psi_\beta(u_n,v_n)\}_n$ is bounded. Thus, there exists $C_0>0$ such that for $n\in\mathbb{N}$ large,
$$
C_0+c_\beta \geq \|\nabla u_n\|_{2}^2+\|\nabla v_n\|_{2}^2  + \mathcal{V}(u_n,v_n).
$$
This estimate combined with \eqref{decomposicao} and the fact that $\mathcal{V}_1(u_n,v_n)\geq0$, imply that
\begin{equation}\label{gradiente-termo-nao-local}
	C_0+c_\beta \geq \|\nabla u_n\|_{2}^2+\|\nabla v_n\|_{2}^2 -\mathcal{V}_2(u_n,v_n),
\end{equation}
for $n\in\mathbb{N}$ large. To estimate the term $\mathcal{V}_2(u_n,v_n)$, we apply the Gagliardo-Nirenberg inequality, namely
\begin{equation}\label{Ga-Ni}
	\|u\|_s^s \leq C_s\|u\|_2^2\|\nabla u\|_2^{s-2},
\end{equation}
for $s>2$ and for $u \in H^1(\mathbb{R}^2)$. By \eqref{desigualdadeV2},
$$
\mathcal{V}_2(u_n,v_n) \leq C_1\left(\|u_n\|_{8/3}^4+2\|u_n\|_{8/3}^2\|v_n\|_{8/3}^2+\|v_n\|_{8/3}^4\right).
$$
Using \eqref{Ga-Ni} with $s=8/3$, there holds
$$
\|u_n\|_{8/3} \leq C_2\|u_n\|_2^{3/4}\|\nabla u_n\|_2^{1/4} \quad \mbox{and} \quad \|v_n\|_{8/3} \leq C_2\|v_n\|_2^{3/4}\|\nabla v_n\|_2^{1/4},
$$
for any $n\in\mathbb{N}$. Thus,
$$
\mathcal{V}_2(u_n,v_n) \leq C_3\left(\|u_n\|_2^{3}\|\nabla u_n\|_2 + \|u_n\|_2^{3/2}\|\nabla u_n\|_2^{1/2}\|v_n\|_2^{3/2}\|\nabla v_n\|_2^{1/2} + \|v_n\|_2^{3}\|\nabla v_n\|_2\right).
$$
By Young's inequality,
$$
\|u_n\|_2^{3/2}\|\nabla u_n\|_2^{1/2}\|v_n\|_2^{3/2}\|\nabla v_n\|_2^{1/2} \leq \frac{\|u_n\|_2^{3}\|\nabla u_n\|_2}{2} + \frac{\|v_n\|_2^{3}\|\nabla v_n\|_2}{2}.
$$
	As consequence,
$$
\mathcal{V}_2(u_n,v_n) \leq C_4\left(\|u_n\|_2^{3}\|\nabla u_n\|_2  + \|v_n\|_2^{3}\|\nabla v_n\|_2\right).
$$	
Now, let $\varepsilon>0$ be arbitrary. Using Young's inequality once more, we deduce
$$
\|u_n\|_2^{3}\|\nabla u_n\|_2 \leq C(\varepsilon)\|u_n\|_2^6 + \varepsilon\|\nabla u_n\|_2^2\quad \mbox{and} \quad 	\|v_n\|_2^{3}\|\nabla v_n\|_2 \leq C(\varepsilon)\|v_n\|_2^6 + \varepsilon\|\nabla v_n\|_2^2.
$$ 
Hence,  
\begin{equation}\label{est-V2-unvn}
	\mathcal{V}_2(u_n,v_n) \leq C_5\left(\|u_n\|_2^6 + \|v_n\|_2^6\right) + C_4\varepsilon\left(\|\nabla u_n\|_2^2 + \|\nabla v_n\|_2^2\right).
\end{equation}
Combining this estimate with \eqref{gradiente-termo-nao-local}, one conclude that  
\begin{equation}\label{limitacao-gradiente}
	C_0+c_\beta \geq \left(1-{C_4\varepsilon} \right)\left(\|\nabla u_n\|_{2}^2+\|\nabla v_n\|_{2}^2\right) -{C_5}\left(\|u_n\|_2^6 + \|v_n\|_2^6\right),
\end{equation}
for $n\in\mathbb{N}$ large. Picking $\varepsilon>0$ small, such that $1-C_4\varepsilon>0$ and using that $\{\|u_n\|_2\}_n$ and $\{\|v_n\|_2\}_n$ are bounded, the last estimate allows us to ensure that $\{\|\nabla u_n\|_2\}_n$ and $\{\|\nabla v_n\|_2\}_n$ are bounded.
\end{proof}

Next, we shall prove that the Nehari-Pohozaev manifold $\mathcal{M}_\beta$ is away from the origin. This is important in order to avoid minimizer sequences that weakly converges to the zero function. Precisely, we prove the following result:

	\begin{lemma}\label{longe-do-zero}
	Assume that $\lambda>e^{1/4}$. Then, there exists $\delta=\delta(\beta)>0$ such that
	$$
	\|u\|_{H^1(\mathbb{R}^2)}+\|v\|_{H^1(\mathbb{R}^2)}\geq \delta, \quad \forall (u,v) \in \mathcal{M}_\beta.
	$$
\end{lemma}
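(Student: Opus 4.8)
The plan is to argue by contradiction: suppose there exists a sequence $\{(u_n,v_n)\}_n \subset \mathcal{M}_\beta$ with $\|u_n\|_{H^1(\mathbb{R}^2)} + \|v_n\|_{H^1(\mathbb{R}^2)} \to 0$. Writing $a_n := \|\nabla u_n\|_2^2 + \|\nabla v_n\|_2^2$ and $b_n := \|u_n\|_2^2 + \|v_n\|_2^2$, both $a_n$ and $b_n$ tend to zero. From the constraint $J_\beta(u_n,v_n)=0$ together with $\mathcal{V}(u_n,v_n) = \mathcal{V}_1(u_n,v_n) - \mathcal{V}_2(u_n,v_n) \geq -\mathcal{V}_2(u_n,v_n)$ and the lower-order estimates \eqref{desigualdadeV1}, \eqref{desigualdadeV2}, I would isolate the positive "kinetic" contribution against the higher-order terms. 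The key point is that $\psi_\beta$, $\mathcal{V}_1$ and $\mathcal{V}_2$ are all of order strictly higher than $a_n + b_n$ near zero (quartic or higher in the $H^1$ norms, while $\psi_\beta \lesssim \|u_n\|_{2p}^{2p} + \cdots$ is at least of order $2p \geq 4$), whereas the term $2a_n - \tfrac14 b_n^2$ contains the genuinely quadratic piece $2a_n$.

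More concretely, I would first handle the logarithmic sign issue. Using the decomposition \eqref{decomposicao}, the constraint reads
\begin{equation*}
2a_n - \tfrac14 b_n^2 + \mathcal{V}_1(u_n,v_n) - \mathcal{V}_2(u_n,v_n) = \tfrac{4p-2}{2p}\psi_\beta(u_n,v_n).
\end{equation*}
The hypothesis $\lambda > e^{1/4}$ guarantees $\log(\lambda + |x|) \geq \log\lambda > 1/4$, so $\mathcal{V}_1(u_n,v_n) \geq \tfrac14 b_n^2$ by a direct pointwise bound (expanding $(u^2+v^2)(x)(u^2+v^2)(y)$ and using $\log(\lambda+|x-y|) \geq \log\lambda$ — this is exactly where the threshold $e^{1/4}$ enters and is used to absorb the negative quartic term $-\tfrac14 b_n^2$). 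Hence
\begin{equation*}
2a_n \leq 2a_n + \big(\mathcal{V}_1(u_n,v_n) - \tfrac14 b_n^2\big) \leq \mathcal{V}_2(u_n,v_n) + \tfrac{4p-2}{2p}\psi_\beta(u_n,v_n).
\end{equation*}
Now \eqref{desigualdadeV2} gives $\mathcal{V}_2(u_n,v_n) \lesssim (\|u_n\|_{8/3}^2 + \|v_n\|_{8/3}^2)^2$, and by the Gagliardo–Nirenberg inequality \eqref{Ga-Ni} with $s=8/3$ this is $\lesssim (b_n^{3/4} a_n^{1/4})^2 \cdot(\text{const}) = O(b_n^{3/2} a_n^{1/2}) = o(a_n)$ as $b_n \to 0$ (splitting into the four terms $u^2u^2, u^2v^2, v^2u^2, v^2v^2$ and applying Young's inequality exactly as in the proof of Lemma~\ref{limitacao-H1}). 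Similarly $\psi_\beta(u_n,v_n) \lesssim \|u_n\|_{2p}^{2p} + \|v_n\|_{2p}^{2p} + 2\beta\int |u_nv_n|^p \lesssim b_n^2 a_n^{p-1}\cdot(\text{const})$ by Gagliardo–Nirenberg with $s=2p$ (and Young on the mixed term), which is again $o(a_n)$ since $p \geq 2$. Therefore $2a_n \leq o(a_n)$, forcing $a_n = 0$ for $n$ large, i.e.\ $u_n$ and $v_n$ are constants; being in $H^1(\mathbb{R}^2)$ they must vanish, contradicting $(u_n,v_n) \in \mathcal{M}_\beta \subset (W^\lambda\times W^\lambda)\setminus\{(0,0)\}$.

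The main obstacle — and the step deserving the most care — is making the chain of inequalities uniform and genuinely showing the right-hand side is $o(a_n)$ rather than merely $O(a_n)$; this requires tracking that every higher-order term carries at least one factor of $b_n$ (which $\to 0$) beyond the power of $a_n$ one would need to merely match $2a_n$. Concretely, one must verify $b_n^{3/2}a_n^{1/2}/a_n = b_n^{3/2}a_n^{-1/2} \to 0$: this is not automatic if $a_n$ decays faster than $b_n^3$, so instead I would avoid dividing by $a_n$ and argue directly — from $2a_n \leq C b_n^{3/2}a_n^{1/2} + C' b_n^2 a_n^{p-1}$, Young's inequality $C b_n^{3/2}a_n^{1/2} \leq a_n + \tfrac{C^2}{4}b_n^3$ gives $a_n \leq \tfrac{C^2}{4}b_n^3 + C' b_n^2 a_n^{p-1}$; since $b_n \to 0$ the last term is absorbed (for $p=2$ it is $C' b_n^2 a_n$, absorbable once $C'b_n^2 < 1$; for $p>2$ it is even smaller relative to $a_n$), yielding $a_n \leq C'' b_n^3$. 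Feeding this back shows $a_n \to 0$ like $b_n^3$, and then re-examining the original inequality $2a_n \leq o(a_n) + (\text{strictly lower order})$ more carefully, or directly combining with the coercive estimate $I_\beta(u_n,v_n) = \tfrac18 b_n^2 + \tfrac{2p-3}{4p}\psi_\beta \geq 0$ from \eqref{limitacao-inferior}, still produces the contradiction $(u_n,v_n) = (0,0)$. An alternative, cleaner route is to normalize: if no such $\delta$ exists, rescale via the projection of Lemma~\ref{projecao} and derive a contradiction with the scaling identities \eqref{gradiente}–\eqref{l2p}; but the direct estimate above is the most transparent.
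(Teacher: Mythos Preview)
Your overall strategy---argue by contradiction, use $J_\beta=0$, bound $\mathcal{V}_1$ from below via $\log(\lambda+|x-y|)\geq\log\lambda$, and control $\mathcal{V}_2$ and $\psi_\beta$ by Gagliardo--Nirenberg---is exactly the paper's approach. However, there is a genuine gap in the execution, and your own discussion of ``the main obstacle'' does not close it.

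The problem is that you use the hypothesis $\lambda>e^{1/4}$ only in the weak form $\mathcal{V}_1(u_n,v_n)\ge \tfrac14 b_n^2$, and then \emph{discard} the nonnegative term $\mathcal{V}_1-\tfrac14 b_n^2$, arriving at
\[
2a_n \;\le\; \mathcal{V}_2(u_n,v_n)+\tfrac{4p-2}{2p}\psi_\beta(u_n,v_n)\;\lesssim\; b_n^{3/2}a_n^{1/2}+b_n\,a_n^{p-1}.
\]
This inequality does \emph{not} force a contradiction: any sequence with $a_n\sim b_n^3\to 0$ satisfies it (for $p=2$, $b_n^{3/2}a_n^{1/2}\sim b_n^3\sim a_n$ and $b_n a_n=o(a_n)$). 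Your Young-inequality step only confirms $a_n\lesssim b_n^3$, which is consistent, and neither ``feeding this back'' nor the identity \eqref{limitacao-inferior} (which merely says $I_\beta\ge 0$ on $\mathcal{M}_\beta$) changes this. In short, by throwing away $\mathcal{V}_1-\tfrac14 b_n^2$ you have lost the only term that controls $b_n$.

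The fix is immediate once seen: keep the sharp bound $\mathcal{V}_1(u_n,v_n)\ge (\log\lambda)\,b_n^2$, so that the constraint gives
\[
2a_n+\bigl(\log\lambda-\tfrac14\bigr)b_n^2 \;\le\; \mathcal{V}_2(u_n,v_n)+\tfrac{4p-2}{2p}\psi_\beta(u_n,v_n)\;\le\; \varepsilon a_n+C(\varepsilon)b_n^3+C_\beta\,b_n\,a_n^{p-1}.
\]
Now both $a_n$ and $b_n^2$ sit on the left with strictly positive coefficients (here $\lambda>e^{1/4}$ is used in full force), while each term on the right is absorbable for $n$ large: $C(\varepsilon)b_n^3=o(b_n^2)$ and $C_\beta b_n a_n^{p-1}=o(a_n)$ since $p\ge 2$. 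This forces $a_n=b_n=0$, the desired contradiction. The paper carries out exactly this argument, phrased as dividing through by $\alpha_n:=2a_n+(\log\lambda)b_n^2$ and showing the quotient of the right-hand side is at most $\tfrac{1}{4\log\lambda}+\varepsilon+o(1)<1$.
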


	\begin{proof}
	The proof follows arguing by contradiction. Suppose that there
	exists a sequence $\{(u_n,v_n)\}_n \subset \mathcal{M}_\beta$ such that
	\begin{equation}\label{convergencia-zero-norma}
		\|u_n\|_{H^1(\mathbb{R}^2)}+\|v_n\|_{H^1(\mathbb{R}^2)}=o_n(1).
	\end{equation}
Notice that
\begin{equation}\label{identidade-J-sequencia}
	2\left(\|\nabla u_n\|_2^2+\|\nabla v_n\|_2^2\right) + \log(\lambda)\left(\|u_n\|_2^2+\|v_n\|_2^2\right)^2 =	2\left(\|\nabla u_n\|_2^2+\|\nabla v_n\|_2^2\right) + \mathcal{T}(u_n,v_n),
\end{equation}
where
$$
\mathcal{T}(u_n,v_n):=\log(\lambda)\left(\|u_n\|_2^4+2\|u_n\|_2^2\|v_n\|_2^2+\|v_n\|_2^4\right).
$$
Noting that
$$
\int_{\mathbb{R}^2}\int_{\mathbb{R}^2}\log(\lambda+|x-y|)u^2(x)v^2(y)\,\mathrm{d}y\dx \geq \log (\lambda)\|u\|_2^2\|v\|_2^2,
$$
one deduces
$$
\begin{aligned}
	\mathcal{T}(u_n,v_n) &\leq \int_{\mathbb{R}^2}\int_{\mathbb{R}^2}\log(\lambda+|x-y|)u_n^2(x)u_n^2(y)\,\mathrm{d}y\dx 
	\\
	&\quad+ \int_{\mathbb{R}^2}\int_{\mathbb{R}^2}\log(\lambda+|x-y|)u_n^2(x)v_n^2(y)\,\mathrm{d}y\dx 
	\\
	&\quad+ \int_{\mathbb{R}^2}\int_{\mathbb{R}^2}\log(\lambda+|x-y|)v_n^2(x)u_n^2(y)\,\mathrm{d}y\dx
	\\
	&\quad+\int_{\mathbb{R}^2}\int_{\mathbb{R}^2}\log(\lambda+|x-y|)v_n^2(x)v_n^2(y)\,\mathrm{d}y\dx
	\\
	&=\mathcal{V}_1(u_n,v_n).
\end{aligned}
$$
This estimate combined with \eqref{identidade-J-sequencia} infer that
$$
2\left(\|\nabla u_n\|_2^2+\|\nabla v_n\|_2^2\right) + \log(\lambda)\left(\|u_n\|_2^2+\|v_n\|_2^2\right)^2 \leq 2\left(\|\nabla u_n\|_2^2+\|\nabla v_n\|_2^2\right) + \mathcal{V}_1(u_n,v_n).
$$
Since $J_\beta(u_n,v_n)=0$, by \eqref{Juv}, one has
\begin{equation}\label{norma-H1-por-baixo}
	\begin{aligned}
		2\left(\|\nabla u_n\|_2^2+\|\nabla v_n\|_2^2\right) + \log(\lambda)\left(\|u_n\|_2^2+\|v_n\|_2^2\right)^2 &\leq \mathcal{V}_2(u_n,v_n)+ \frac{1}{4}\left(\|u_n\|_2^2+\|v_n\|_2^2\right)^2
		\\
		&\quad+\frac{(4p-2)}{2p}\psi_\beta(u_n,v_n).
	\end{aligned}
\end{equation}
{Combining \eqref{est-V2-unvn} and \eqref{psi} (with H\"older's inequality)}, we get
$$
\begin{aligned}
	2\left(\|\nabla u_n\|_2^2+\|\nabla v_n\|_2^2\right) + \log(\lambda)\left(\|u_n\|_2^2+\|v_n\|_2^2\right)^2 &\leq C_1\left(\|u_n\|_2^6 + \|v_n\|_2^6\right)\\
	&\quad+ C_1\varepsilon\left(\|\nabla u_n\|_2^2 + \|\nabla v_n\|_2^2\right)
	\\
	&\quad+\frac{1}{4}\left(\|u_n\|_2^2+\|v_n\|_2^2\right)^2
	\\
	&\quad +C_1\left(\|u_n\|_{2p}^{2p}+\|v_n\|_{2p}^{2p}\right).
\end{aligned}
$$
As consequence, defining
$$
\alpha_n:=2\left(\|\nabla u_n\|_2^2+\|\nabla v_n\|_2^2\right) + \log(\lambda)\left(\|u_n\|_2^2+\|v_n\|_2^2\right)^2,
$$
we have that
$$
1\leq C_1\frac{\|u_n\|_2^6 + \|v_n\|_2^6}{\alpha_n}+C_1\varepsilon\frac{\|\nabla u_n\|_2^2 + \|\nabla v_n\|_2^2}{\alpha_n}+\frac{1}{4}\frac{\left(\|u_n\|_2^2+\|v_n\|_2^2\right)^2}{\alpha_n}+C_1\frac{\|u_n\|_{2p}^{2p}+\|v_n\|_{2p}^{2p}}{\alpha_n}.
$$
Thus, we see that
$$
1 \leq C_2 \left(\|u_n\|_2^2 + \|v_n\|_2^2\right) + C_2\varepsilon + \frac{1}{4\log(\lambda)}+C_1\frac{\|u_n\|_{2p}^{2p}+\|v_n\|_{2p}^{2p}}{\alpha_n}.
$$
From \eqref{Ga-Ni} together with Young's inequality, we can estimate
$$
\|u_n\|_{2p}^{2p} \leq C_3\|u_n\|_2^2\|\nabla u_n\|_2^{2p-2} \leq C_3\left(\varepsilon\|u_n\|_2^4+C(\varepsilon)\|\nabla u_n\|_2^{2(2p-2)}\right).
$$
Similarly,
$$
\|v_n\|_{2p}^{2p} \leq C_4\|v_n\|_2^2\|\nabla v_n\|_2^{2p-2} \leq C_4\left(\varepsilon\|v_n\|_2^4+C(\varepsilon)\|\nabla v_n\|_2^{2(2p-2)}\right).
$$
The last three estimates, imply that
$$
1 \leq C_2 \left(\|u_n\|_2^2 + \|v_n\|_2^2\right) + C_2\varepsilon + \frac{1}{4\log(\lambda)}+C_5\varepsilon+C_5\left(\|\nabla u_n\|_2^{2(2p-2)-2}+\|\nabla v_n\|_2^{2(2p-2)-2}\right).
$$
Since $p>3/2$, then $2(2p-2)-2>0$, and hence
$$
1\leq C_2\varepsilon+\frac{1}{4\log(\lambda)}+C_5\varepsilon.
$$
Taking $\varepsilon \to 0^+$, we see that  $\lambda \leq e^{1/4}$ which is a  contradiction. 
\end{proof}

Finally, we show that the existence of weak solution for System \eqref{S}.

\begin{proposition}\label{atingibilidade}
	Assume that $2\leq p<\infty$ and $\beta\geq0$. Then, there exists a pair $(u,v)\in\mathcal{M}_\beta$, with $\lambda>e^{1/4}$, such that
	$$
	I_\beta(u,v)=c_\beta, \quad \mbox{with} \ \ u,v\geq0.
	$$
	In particular, $(u,v)$ is a weak solution of \eqref{S}.
\end{proposition}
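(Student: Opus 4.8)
The natural approach is a concentration--compactness argument tailored to the translation invariance of $I_\beta$ (note that $\mathcal{V}$, hence $I_\beta$ and $J_\beta$, depends on $u,v$ only through differences $x-y$) and to the diverging weight in $W^\lambda$. I would fix a minimizing sequence $\{(u_n,v_n)\}_n\subset\mathcal{M}_\beta$ with $I_\beta(u_n,v_n)\to c_\beta$. Since $I_\beta$, $J_\beta$, $\mathcal{V}$, $\mathcal{V}_1$, $\mathcal{V}_2$, $\psi_\beta$ and all the relevant norms depend only on $(|u_n|,|v_n|)$ (recall $\|\nabla|u|\|_2=\|\nabla u\|_2$), I may replace $(u_n,v_n)$ by $(|u_n|,|v_n|)$ and assume $u_n,v_n\ge0$. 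By Lemma~\ref{limitacao-H1} the sequence is bounded in $H^1(\mathbb{R}^2)\times H^1(\mathbb{R}^2)$, and by Lemma~\ref{longe-do-zero} it is bounded away from zero there. Moreover $\{\psi_\beta(u_n,v_n)\}_n$ is bounded by \eqref{estimativa-sequencia}, so identity \eqref{igualdade-cbeta} together with the $H^1$-bounds makes $\{\mathcal{V}(u_n,v_n)\}_n$ bounded, and since $\mathcal{V}_2(u_n,v_n)$ is controlled by \eqref{desigualdadeV2} and \eqref{Ga-Ni}, also $\{\mathcal{V}_1(u_n,v_n)\}_n$ is bounded. A short preliminary remark is that $c_\beta>0$: otherwise \eqref{limitacao-inferior} gives $\|u_n\|_2,\|v_n\|_2\to0$ and $\psi_\beta(u_n,v_n)\to0$, hence $\mathcal{V}_2(u_n,v_n)\to0$ by \eqref{desigualdadeV2} and \eqref{Ga-Ni} (the gradients being bounded), so $\mathcal{V}(u_n,v_n)\ge-\mathcal{V}_2(u_n,v_n)=o_n(1)$, and then $J_\beta(u_n,v_n)=0$ forces $\|\nabla u_n\|_2,\|\nabla v_n\|_2\to0$, contradicting Lemma~\ref{longe-do-zero}.

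Next I would rule out vanishing. If $\sup_{z\in\mathbb{R}^2}\int_{B_1(z)}(u_n^2+v_n^2)\dx\to0$, Lions' lemma gives $\|u_n\|_{2p},\|v_n\|_{2p}\to0$, hence $\psi_\beta(u_n,v_n)\to0$ by H\"older; then comparing the three identities \eqref{limitacao-inferior}, the definition of $I_\beta$, and $J_\beta(u_n,v_n)=0$ shows that $2(\|\nabla u_n\|_2^2+\|\nabla v_n\|_2^2)+\mathcal{V}(u_n,v_n)$ converges simultaneously to $2c_\beta$ and to $4c_\beta$, forcing $c_\beta=0$, a contradiction. Hence there are $\rho>0$, a subsequence, and points $y_n\in\mathbb{R}^2$ with $\int_{B_1(y_n)}(u_n^2+v_n^2)\dx\ge\rho$. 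Put $\hat u_n:=u_n(\cdot+y_n)$, $\hat v_n:=v_n(\cdot+y_n)$; by translation invariance $(\hat u_n,\hat v_n)\in\mathcal{M}_\beta$, $I_\beta(\hat u_n,\hat v_n)\to c_\beta$, the quantities $\mathcal{V}_1(\hat u_n,\hat v_n)$, $\|\nabla\hat u_n\|_2$, $\|\hat u_n\|_2$ (and analogues in $\hat v_n$) remain bounded, and $\int_{B_1(0)}(\hat u_n^2+\hat v_n^2)\dx\ge\rho$. The step I expect to be the crux is recovering control of the weighted term after the translation, because $\|\cdot\|_{\lambda,*}$ is \emph{not} translation invariant: using $\lambda+|x-y|\ge c_\lambda(\lambda+|x|)$ for $y\in B_1(0)$ and $|x|$ large, together with $\int_{B_1(0)}(\hat u_n^2+\hat v_n^2)\ge\rho$, one estimates
$$
\int_{|x|\ge R}\log(\lambda+|x|)\bigl(\hat u_n^2+\hat v_n^2\bigr)\dx \ \le\ \rho^{-1}\,\mathcal{V}_1(\hat u_n,\hat v_n)+C,
$$
whence $\{\|\hat u_n\|_{\lambda,*}^2+\|\hat v_n\|_{\lambda,*}^2\}_n$ is bounded and $\{(\hat u_n,\hat v_n)\}_n$ is bounded in $W^\lambda\times W^\lambda$.

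Passing to a subsequence, $\hat u_n\rightharpoonup\hat u$, $\hat v_n\rightharpoonup\hat v$ in $W^\lambda$; since the weight diverges, $\{\hat u_n^2\}$ and $\{\hat v_n^2\}$ are tight, so $\hat u_n\to\hat u$, $\hat v_n\to\hat v$ in $L^q(\mathbb{R}^2)$ for every $q\in(2,\infty)$ and in $L^2_{\mathrm{loc}}(\mathbb{R}^2)$; in particular $(\hat u,\hat v)\ne(0,0)$, $\|\hat u_n\|_2\to\|\hat u\|_2$, $\|\hat v_n\|_2\to\|\hat v\|_2$, $\psi_\beta(\hat u_n,\hat v_n)\to\psi_\beta(\hat u,\hat v)$, $\mathcal{V}_2(\hat u_n,\hat v_n)\to\mathcal{V}_2(\hat u,\hat v)$ by \eqref{desigualdadeV2} and $L^{8/3}$-convergence, and $\mathcal{V}_1(\hat u,\hat v)\le\liminf_n\mathcal{V}_1(\hat u_n,\hat v_n)$ by Fatou. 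Combined with weak lower semicontinuity of $\|\nabla\cdot\|_2^2$, this yields $I_\beta(\hat u,\hat v)\le c_\beta$ and, from $J_\beta(\hat u_n,\hat v_n)=0$, also $J_\beta(\hat u,\hat v)\le0$. By Lemma~\ref{projecao} there is $t_0>0$ with $(t_0^2\hat u_{t_0},t_0^2\hat v_{t_0})\in\mathcal{M}_\beta$ and $I_\beta(t_0^2\hat u_{t_0},t_0^2\hat v_{t_0})=\max_{t>0}I_\beta(t^2\hat u_t,t^2\hat v_t)$; and for each fixed $t>0$, formula \eqref{funcional-utvt} and the above (semi)continuities give
$$
I_\beta(t^2\hat u_t,t^2\hat v_t)\ \le\ \liminf_n I_\beta\bigl(t^2(\hat u_n)_t,t^2(\hat v_n)_t\bigr)\ \le\ \liminf_n I_\beta(\hat u_n,\hat v_n)\ =\ c_\beta,
$$
where Lemma~\ref{projecao} applied to $(\hat u_n,\hat v_n)\in\mathcal{M}_\beta$ is used in the middle step. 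Taking the supremum over $t>0$ shows $I_\beta(t_0^2\hat u_{t_0},t_0^2\hat v_{t_0})\le c_\beta$, while the reverse inequality is automatic on $\mathcal{M}_\beta$. Hence $(u,v):=(t_0^2\hat u_{t_0},t_0^2\hat v_{t_0})\in\mathcal{M}_\beta$ satisfies $I_\beta(u,v)=c_\beta$ with $u,v\ge0$, and Proposition~\ref{esquema} then gives $I_\beta'(u,v)=0$, i.e.\ $(u,v)$ is a weak solution of \eqref{S}. The two genuinely delicate points, as indicated, are excluding vanishing through the algebraic interplay of the Nehari--Pohozaev identities and, above all, restoring the $\|\cdot\|_{\lambda,*}$ bound after recentring, which is what reconciles the translation argument with the non-translation-invariant ambient space $W^\lambda$.
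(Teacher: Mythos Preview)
Your proof is correct and follows essentially the same four-step strategy as the paper: $H^1$-boundedness via Lemma~\ref{limitacao-H1}, exclusion of vanishing, recentring by $y_n$ with recovery of the $\|\cdot\|_{\lambda,*}$ bound from the translation-invariant boundedness of $\mathcal{V}_1$, and passage to the weak limit with projection onto $\mathcal{M}_\beta$ via Lemma~\ref{projecao}. The only noteworthy variations are that you rule out vanishing by first proving $c_\beta>0$ and then exploiting the incompatibility of the limits $2c_\beta$ and $4c_\beta$ for $2(\|\nabla u_n\|_2^2+\|\nabla v_n\|_2^2)+\mathcal{V}(u_n,v_n)$, whereas the paper uses the estimate \eqref{norma-H1-por-baixo} directly; and in the last step you bypass the explicit use of $t_0\le 1$ by applying lower semicontinuity to the whole fiber map $t\mapsto I_\beta(t^2(\hat u_n)_t,t^2(\hat v_n)_t)$ together with \eqref{maximo}, while the paper instead combines $t_0\le 1$ with identity \eqref{limitacao-inferior} --- both are standard and equivalent in effect.
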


\begin{proof}
	Let $\{(u_n,v_n)\}_n \subset \mathcal{M}_\beta$ be such that 
	\begin{equation}\label{sequencia-minimizante}
		I_\beta(u_n,v_n)=c_\beta+o_n(1).
	\end{equation}
	
		We divide the proof in four steps.
	\bigskip
	
	\noindent\underline{First step:} It holds that
	\begin{equation}\label{norma-bola-positiva}
		\liminf_{n\to \infty}\left(\sup_{y \in \mathbb{Z}^2}\int_{B_R(y)}\left(u_n^2+v_n^2\right)\dx\right)\geq \eta, \quad \mbox{for some
		} \ R>0,
	\end{equation}
	where $B_R(y):=\{ x \in \mathbb{R}^2 : |x-y|<R\}$. In fact, otherwise, after passing to a subsequence, $u_n,v_n \to 0$ in $L^s(\mathbb{R}^2)$, for any $s>2$, due to Lions’ Lemma, see e.g. \cite[Lemma 1.21]{Willem}. As consequence, we can use \eqref{desigualdadeV2} to deduce
	$$
	\mathcal{V}_2(u_n,v_n)=o_n(1).
	$$
	{On other hand, from \eqref{psi}} and H\"older's inequality, we can conclude that
	$$
	\psi_\beta(u_n,v_n)=o_n(1).
	$$
	Applying these convergences in \eqref{norma-H1-por-baixo}, there holds
	$$
	2\left(\|\nabla u_n\|_2^2+\|\nabla v_n\|_2^2\right) + \frac{\log(\lambda)-1}{4}\left(\|u_n\|_2^2+\|v_n\|_2^2\right)^2\leq o_n(1).
	$$
	Recall that $\lambda> e^{1/4}$. Hence, 
	\begin{equation}\label{seq-min-conv-zero}
		\|u_n\|_{H^1(\mathbb{R}^2)}+\|v_n\|_{H^1(\mathbb{R}^2)}=o_n(1),
	\end{equation}
	which is a contradiction with Lemma~\ref{longe-do-zero} and finishes the proof of the first step.

	\bigskip
	
	\noindent\underline{Second step:} The sequence $\{\mathcal{V}_1(u_n,v_n)\}_n \subset \mathbb{R}$ is bounded.
	
	\bigskip

	Using \eqref{Juv}, we have that
	$$
	\mathcal{V}_1(u_n,v_n) \leq \mathcal{V}_2(u_n,v_n) +  \frac{1}{4}\left(\|u_n\|_{2}^2+ \|v_n\|_{2}^2\right)^2 + \frac{4p-2}{2p}\psi_\beta(u_n,v_n).
	$$
	Since the sequences $\{u_n\}_n$ and $\{v_n\}_n$ are bounded in the norm $\|\cdot\|_{H^1(\mathbb{R}^2)}$ (see Lemma~\ref{limitacao-H1}), we can combine \eqref{est-V2-unvn} {together with \eqref{psi}, H\"older's inequality,} and the embedding $H^1(\mathbb{R}^2)\hookrightarrow L^{2p}(\mathbb{R}^2)$ to ensure that $\{\mathcal{V}_1(u_n,v_n)\}_n \subset \mathbb{R}$ is bounded, completing this step.
	
	\bigskip
	
	\noindent\underline{Third step:} There exists $\{y_n\}_n\subset \mathbb{Z}^2$ such that the sequences $\{\overline{u}_n\}_n$ and $\{\overline{v}_n\}_n$, given by  $\overline{u}_n(x):=u_n(x-y_n)$ and $\overline{v}_n(x):=v_n(x-y_n)$, are bounded in the norm $\|\cdot\|_W^\lambda$.
	
	\bigskip
	
	Firstly, by using \eqref{norma-bola-positiva}, there exists $\{y_n\}_n\subset \mathbb{Z}^2$ such that 
	\begin{equation}\label{des-transladada-nao-nula}
		\int_{B_R(0)}\left(\overline{u}_n^2+\overline{v}_n^2\right)\dx \geq \frac{\eta}{2}.
	\end{equation}
	Since $I_\beta$ and $J_\beta$ are invariant under $\mathbb{Z}^2$-translations, it follows from \eqref{sequencia-minimizante} that
	$$
	I_\beta(\overline{u}_n,\overline{v}_n) = I_\beta(u_n,v_n)=c_\beta+o_n(1) \quad \mbox{and} \quad J_\beta(\overline{u}_n,\overline{v}_n) = J_\beta(u_n,v_n)=0.
	$$
	The Lemma~\ref{limitacao-H1} yields $C_1>0$ such that
	\begin{equation}\label{limitacao-H1-C1}
		\|\overline{u}_n\|_{H^1(\mathbb{R}^2)}, \, \|\overline{v}_n\|_{H^1(\mathbb{R}^2)}\leq C_1, \quad \forall n \in \mathbb{N}.
	\end{equation}
	It remains to prove that there exists $C_2>0$ such that 
	$$
	\int_{\mathbb{R}^2 }\log(\lambda+|x|)\overline{u}_n^2, \, \int_{\mathbb{R}^2 }\log(\lambda+|x|)\overline{v}_n^2\leq C_2, \quad \forall n \in \mathbb{N}.
	$$
	Considering $x \in \mathbb{R}^2 \setminus B_{2R}(0)$ and $y \in B_R(0)$, notice that
	$$
	\lambda + |x-y| \geq \lambda+|x|-|y| \geq \lambda+|x|-R \geq \lambda+\frac{|x|}{2}  \geq \sqrt{\lambda+|x|}.
	$$
	Now, by using  \eqref{des-transladada-nao-nula}, we deduce that  
	$$
	\begin{aligned}
		C_3 &\geq\mathcal{V}_1(\overline{u}_n,\overline{v}_n)\\ 
		&\geq
		\frac{1}{2}\int_{\mathbb{R}^2 \setminus B_{2R}(0)}\int_{B_R(0)}\log(\lambda+|x|)\left(\overline{u}_n^2(y)+\overline{v}_n^2(y)\right)\left(\overline{u}_n^2(x)+\overline{v}_n^2(x)\right)\dy\dx
		\\
		&\geq \frac{\eta}{4}\int_{\mathbb{R}^2 \setminus B_{2R}(0)}\log(\lambda+|x|)\left(\overline{u}_n^2(x)+\overline{v}_n^2(x)\right)\dx.
	\end{aligned}
	$$
	Consequently,
	\begin{equation}\label{des-uniforme-lambda}
		\begin{aligned}
			\int_{\mathbb{R}^2 }\log(\lambda+|x|)\left(\overline{u}_n^2(y)+\overline{v}_n^2(y)\right)\dx &\leq \frac{4C_3}{\eta}
			+\log(\lambda+2R)\int_{B_{2R}(0)}\left(\overline{u}_n^2(y)+\overline{v}_n^2(y)\right)\dx
			\\
			& \leq C_4,
		\end{aligned}
	\end{equation}
	for some $C_4=C_4(\lambda)>0$, where we used \eqref{limitacao-H1-C1}. This step is completed.
	
	\bigskip
	
	\noindent\underline{Fourth step:} There exists $(u,v)\in\mathcal{M}_\beta$ such that
	$$
	I_\beta(u,v)=c_\beta, \quad \mbox{with} \ \ u,v\geq0.
	$$

	It follows from the third step that, up to a subsequence, $\overline{u}_n \rightharpoonup u$ and $\overline{v}_n \rightharpoonup v$ weakly in $W^\lambda$. We claim that $u\neq 0$ or $v\neq0$. If this is not true, $u=v=0$. Recall also that the compact embedding $W^\lambda\hookrightarrow L^s(\mathbb{R}^2)$ holds for each $2\leq s<\infty$, see for instance \cite{Cingolani-Weth}. Hence, {using the last assertion together with \eqref{desigualdadeV2}  and 	\eqref{psi} (with H\"older's inequality)}, we obtain
	$$
	\mathcal{V}_2(\overline{u}_n,\overline{v}_n)=\psi_\beta(\overline{u}_n,\overline{v}_n)=o_n(1).
	$$
	Similarly to \eqref{seq-min-conv-zero} one deduces 	
	$$
	\|\overline{u}_n\|_{H^1(\mathbb{R}^2)}+\|\overline{v}_n\|_{H^1(\mathbb{R}^2)}=o_n(1),
	$$
	which is a contradiction with Lemma~\ref{longe-do-zero}. Thus,  $u\neq 0$ or $v\neq0$.  
	
    In view of Lemma~\ref{projecao} there exists $t_0>0$ such that $(t_0^2u_{t_0},t_0^2v_{t_0}) \in \mathcal{M}_\beta$. We claim that $t_0 \in (0,1]$. In fact, analogously to the proof of \cite[Lemma 2.7]{cv2023} we infer that
	$$
	\mathcal{V}(u,v)\leq\liminf_{n\to \infty}\mathcal{V}(\overline{u}_n,\overline{v}_n).
	$$
	On other hand, from the compact embedding $W^\lambda\hookrightarrow L^s(\mathbb{R}^2)$  ($2\leq s<\infty$) together with \cite[Lemma  3.5]{Carvalho-Figueiredo-Maia-Medeiros}, one may deduce that
	$$
	\lim_{n\to \infty}\psi_\beta(\overline{u}_n,\overline{v}_n)= \psi_\beta(u,v)
	$$
	and
	$$
	\lim_{n\to \infty}\left(\|\overline{u}_n\|_{2}^2+ \|\overline{v}_n\|_{2}^2\right)^2= \left(\|u\|_{2}^2+ \|v\|_{2}^2\right)^2.
	$$
	The last three expressions combined with \eqref{Juv} and the fact that
	$$
	\|\nabla u\|_{2}^2+\|\nabla v\|_{2}^2 \leq\liminf_{n\to \infty}\left(\|\nabla \overline{u}_n\|_{2}^2+\|\nabla \overline{v}_n\|_{2}^2\right) 
	$$
	imply that
	\begin{equation}\label{condicao-t<1}
		J_\beta(u,v)\leq\liminf_{n\to \infty}J_\beta(\overline{u}_n,\overline{v}_n)=0.
	\end{equation}
	Therefore, $t_0 \in (0,1]$, see for instance Lemma~\ref{projecao}. 
	
	Finally, we can use \eqref{limitacao-inferior}
	to get
	\begin{equation}\label{est-nivel-tutv}
		\begin{aligned}
			c_\beta &\leq I_\beta(t_0^2u_{t_0},t_0^2v_{t_0}) \\
                &= I_\beta(t_0^2u_{t_0},t_0^2v_{t_0}) - \frac{1}{4}J_\beta(t_0^2u_{t_0},t_0^2v_{t_0})
			\\
			&=\frac{1}{8}\left(\|t_0^2u_{t_0}\|_{2}^2+ \|t_0^2v_{t_0}\|_{2}^2\right)^2 + \frac{2p-3}{4p}\psi_\beta(t_0^2u_{t_0},t_0^2v_{t_0}).
		\end{aligned}
	\end{equation}
	By \eqref{mud-var-norma-2p} (with $p=2$), it holds that
	$$
	\|t_0^2u_{t_0}\|_{2}^2 = t_0^2\|u\|_2^2 \quad \mbox{and} \quad \|t_0^2v_{t_0}\|_{2}^2 = t_0^2\|v\|_2^2.
	$$
	Moreover, the equality \eqref{l2p} yields
	$$
	\psi_\beta(t_0^2u_{t_0},t_0^2v_{t_0})=t_0^{4p-2}\psi_\beta(u,v).
	$$
	Since $t_0\in(0,1]$ and using the identities just above together with \eqref{est-nivel-tutv} and \eqref{limitacao-inferior}, we ensure that
	\begin{equation}\label{nivel-atingido}
		\begin{aligned}
			c_\beta &\leq I_\beta(t_0^2u_{t_0},t_0^2v_{t_0}) \\
                & \leq \frac{1}{8}\left(\|u\|_2^2+\|v\|_2^2\right)^2+\frac{2p-3}{4p}\psi_\beta(u,v) 
			\\
			&= \lim_{n\to \infty}\left[\frac{1}{8}\left(\|\overline{u}_n\|_2^2+\|\overline{v}_n\|_2^2\right)^2+\frac{2p-3}{4p}\psi_\beta(\overline{u}_n,\overline{v}_n)\right]\\
            & =\lim_{n\to \infty} I_\beta(\overline{u}_n,\overline{v}_n)=c_\beta.
		\end{aligned}
	\end{equation}
	Thus, $t_0=1$, because otherwise, $c_\beta<c_\beta$. Thereby,
	$$
	I_\beta(u,v)=c_\beta, \quad \mbox{with} \ \ (u,v) \in \mathcal{M}_\beta.
	$$
	Since
	$$
	c_\beta = I_\beta(u,v)=I_\beta(|u|,|v|) \quad \mbox{ and} \quad 0=J_\beta(u,v)=J_\beta(|u|,|v|),
	$$
	we can consider $u,v \geq0$ in $\mathbb{R}^2$. This ends the proof of this step.

	Now, we can use Proposition~\ref{esquema} to concludes that $(u,v)$ is a weak solution of \eqref{S}. The proof is completed.
\end{proof}

\subsection{Asymptotic behavior}
In this subsection we consider the behavior of solutions for system \eqref{S} when the parameter $\beta > 0$ goes to zero or infinity. Initially, we shall prove the following result:

\begin{lemma}
	For each $\lambda>e^{1/4}$, $u_\beta^\lambda\rightharpoonup u^\lambda$ and $v_\beta^\lambda\rightharpoonup v^\lambda$ weakly in $W^\lambda$, as $\beta\to0^+$, such that
	$$
	I_0(u^\lambda,v^\lambda)=c_0.
	$$
\end{lemma}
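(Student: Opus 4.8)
The plan is to run the scheme of Proposition~\ref{atingibilidade} with $\beta\to 0^+$ playing the role of $n\to\infty$. Write $(u_\beta,v_\beta):=(u_\beta^\lambda,v_\beta^\lambda)\in\mathcal M_\beta$ for the ground state produced by Proposition~\ref{atingibilidade}, so that $I_\beta(u_\beta,v_\beta)=c_\beta$. The first step is to show $\limsup_{\beta\to0^+}c_\beta\le c_0$. Fix a ground state $(u_0,v_0)\in\mathcal M_0$ for $I_0$ (which exists by Proposition~\ref{atingibilidade} with $\beta=0$) and, for each $\beta>0$, let $t_\beta>0$ be the projection from Lemma~\ref{projecao}, so $(t_\beta^2(u_0)_{t_\beta},t_\beta^2(v_0)_{t_\beta})\in\mathcal M_\beta$. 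By \eqref{J-t2ut-t2vt}, $t_\beta$ is the unique root of $G_\beta(t):=a+b\log(t^{-1})-\tfrac{4p-2}{2p}\,t^{4p-6}\psi_\beta(u_0,v_0)$, where $a$ and $b=(\|u_0\|_2^2+\|v_0\|_2^2)^2>0$ do not depend on $\beta$; each $G_\beta$ is strictly decreasing since $p>3/2$, its only $\beta$-dependence is through $\psi_\beta(u_0,v_0)=\psi_0(u_0,v_0)+2\beta\int_{\mathbb R^2}|u_0v_0|^p\to\psi_0(u_0,v_0)$, and $(u_0,v_0)\in\mathcal M_0$ forces the $\beta=0$ root to be $1$, so $t_\beta\to1$. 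Plugging $t_\beta\to1$ and the convergence of $\psi_\beta(u_0,v_0)$ into the explicit formula \eqref{funcional-utvt} gives $c_\beta\le I_\beta(t_\beta^2(u_0)_{t_\beta},t_\beta^2(v_0)_{t_\beta})\to I_0(u_0,v_0)=c_0$; in particular $\{c_\beta\}$ is bounded for $\beta$ small.

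Next I would establish, uniformly for $\beta$ in a bounded interval $(0,\beta_0]$, that $\{u_\beta\}$ and $\{v_\beta\}$ are bounded in $W^\lambda$. Inspecting the proofs of Lemma~\ref{limitacao-H1} and Lemma~\ref{longe-do-zero}, the constants there may be chosen independent of $\beta\in(0,\beta_0]$: in the former only $\psi_\beta\ge0$ is used, and in the latter $\psi_\beta(u,v)\le(1+\beta_0)(\|u\|_{2p}^{2p}+\|v\|_{2p}^{2p})$ by H\"older. Hence, from $\{c_\beta\}$ bounded, \eqref{limitacao-inferior} and the Gagliardo--Nirenberg argument of Lemma~\ref{limitacao-H1} yield uniform bounds on $\|u_\beta\|_{H^1(\mathbb R^2)}$, $\|v_\beta\|_{H^1(\mathbb R^2)}$ and $\psi_\beta(u_\beta,v_\beta)$. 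Because $I_\beta$ and $J_\beta$ are invariant under $\mathbb Z^2$-translations on $H^1(\mathbb R^2)\times H^1(\mathbb R^2)$, after replacing $(u_\beta,v_\beta)$ by a suitable $\mathbb Z^2$-translate (still in $\mathcal M_\beta$, still at energy $c_\beta$) one repeats the first and third steps of Proposition~\ref{atingibilidade} with these uniform constants to obtain a non-vanishing bound $\int_{B_R(0)}(u_\beta^2+v_\beta^2)\dx\ge\eta>0$ uniform in $\beta$, and then a uniform bound on $\|u_\beta\|_{\lambda,*}$, $\|v_\beta\|_{\lambda,*}$. This is the delicate point of the proof, precisely because $W^\lambda$ is not translation invariant, so the relocation must precede the control of the weighted norm; everything else is a compactness/semicontinuity argument parallel to Proposition~\ref{atingibilidade}.

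Given this bound, along any $\beta_n\to0^+$ pass to a subsequence with $u_{\beta_n}\rightharpoonup u^\lambda$, $v_{\beta_n}\rightharpoonup v^\lambda$ in $W^\lambda$, hence strongly in every $L^s(\mathbb R^2)$, $2\le s<\infty$, by the compact embedding $W^\lambda\hookrightarrow L^s$; the uniform non-vanishing forces $u^\lambda\neq0$ or $v^\lambda\neq0$, as in the fourth step of Proposition~\ref{atingibilidade}. From strong $L^s$-convergence and $\beta_n\to0$ we get $\psi_{\beta_n}(u_{\beta_n},v_{\beta_n})\to\psi_0(u^\lambda,v^\lambda)$ and $(\|u_{\beta_n}\|_2^2+\|v_{\beta_n}\|_2^2)^2\to(\|u^\lambda\|_2^2+\|v^\lambda\|_2^2)^2$, while weak lower semicontinuity of the Dirichlet term and of $\mathcal V$ (cf.\ \cite[Lemma 2.7]{cv2023}) applied to the identity $0=J_{\beta_n}(u_{\beta_n},v_{\beta_n})$ via \eqref{Juv} gives $J_0(u^\lambda,v^\lambda)\le0$. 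Then Lemma~\ref{projecao} yields $t_0\in(0,1]$ with $(t_0^2u^\lambda_{t_0},t_0^2v^\lambda_{t_0})\in\mathcal M_0$, and combining \eqref{limitacao-inferior} for $\beta=0$, \eqref{mud-var-norma-2p} with $p=2$, \eqref{l2p}, $t_0\le1$ and the first step,
\begin{align*}
c_0&\le I_0\big(t_0^2u^\lambda_{t_0},t_0^2v^\lambda_{t_0}\big)\le\tfrac18\big(\|u^\lambda\|_2^2+\|v^\lambda\|_2^2\big)^2+\tfrac{2p-3}{4p}\psi_0(u^\lambda,v^\lambda)\\
&=\lim_{n\to\infty}\Big[I_{\beta_n}(u_{\beta_n},v_{\beta_n})-\tfrac14J_{\beta_n}(u_{\beta_n},v_{\beta_n})\Big]=\lim_{n\to\infty}c_{\beta_n}\le c_0.
\end{align*}
Since $u^\lambda\neq0$ or $v^\lambda\neq0$, the second inequality is strict unless $t_0=1$; hence $t_0=1$, so $(u^\lambda,v^\lambda)\in\mathcal M_0$ and $I_0(u^\lambda,v^\lambda)=I_0(u^\lambda,v^\lambda)-\tfrac14J_0(u^\lambda,v^\lambda)=c_0$, which is the assertion.
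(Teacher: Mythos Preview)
Your argument is correct and follows the same overall scheme as the paper's proof (project a $c_0$-minimizer into $\mathcal M_\beta$, get uniform bounds, pass to the weak limit, use $J_0\le0$ and Lemma~\ref{projecao} to force $t_0=1$). The one place where the paper is noticeably slicker is your first step: instead of tracking the root $t_\beta\to1$ of $G_\beta$, the paper simply observes that $\psi_\beta\ge\psi_0$ implies $I_\beta\le I_0$ pointwise, so
\[
c_\beta\le I_\beta\bigl(t_\beta^2(u_0)_{t_\beta},t_\beta^2(v_0)_{t_\beta}\bigr)\le I_0\bigl(t_\beta^2(u_0)_{t_\beta},t_\beta^2(v_0)_{t_\beta}\bigr)\le\max_{t>0}I_0\bigl(t^2(u_0)_t,t^2(v_0)_t\bigr)=c_0
\]
for every $\beta\ge0$, with no limit needed. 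Conversely, your treatment of the $W^\lambda$-boundedness step is more explicit than the paper's, which condenses the non-vanishing, the possible $\mathbb Z^2$-relocation, and the weighted-norm control into the single phrase ``similarly to the proof of Proposition~\ref{atingibilidade}''; your remark that the relocation must precede the control of $\|\cdot\|_{\lambda,*}$ is exactly the content hidden behind that phrase.
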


\begin{proof}
	We claim that $c_\beta \leq c_0$, for any $\beta \geq0$. In fact, consider $(u_0,v_0) \in \mathcal{M}_0 \subset  \left(W^\lambda\times W^\lambda\right)\setminus\{(0,0)\}$ such that
	$$
	I_0(u_0,v_0)=c_0.
	$$
	By Lemma~\ref{projecao}, there exists $t_\beta>0$ such that $(t_\beta^2(u_0)_{t_\beta},t_\beta^2(v_0)_{t_\beta}) \in \mathcal{M}_{\beta}$. Thus,
	$$
	\begin{aligned}
		c_\beta &\leq I_\beta(t_\beta^2(u_0)_{t_\beta},t_\beta^2(v_0)_{t_\beta}) \leq I_0(t_\beta^2(u_0)_{t_\beta},t_\beta^2(v_0)_{t_\beta}) \leq \max_{t>0}I_0(t^2(u_0)_{t},t^2(v_0)_{t}) 
		\\
		&= I_0(u_0,v_0)=c_0,
	\end{aligned}
	$$
	where we used \eqref{maximo}. So, the claim is verified.
	
	The last estimate combined with \eqref{estimativa-sequencia} and \eqref{limitacao-gradiente} imply that
	$$
	\|\nabla u_\beta^\lambda\|_{2}^2+\|\nabla v_\beta^\lambda\|_{2}^2+\left(\|u_\beta^\lambda\|_{2}^2+ \|v_\beta^\lambda\|_{2}^2\right)^2 \leq C_1,
	$$
	for some $C_1>0$ independent of $\beta$. Thus, the sequences $\{u_\beta^\lambda\}_{\beta \geq0}$ and $\{v_\beta^\lambda\}_{\beta \geq0}$ are bounded in $H^1(\mathbb{R}^2)$. Similarly to the proof of Proposition~\ref{atingibilidade}, we have that $u_\beta^\lambda\rightharpoonup u^\lambda$ and $v_\beta^\lambda\rightharpoonup v^\lambda$ weakly in $W^\lambda$, as $\beta\to0^+$, with $(u^\lambda,v^\lambda) \neq (0,0)$. By Lemma~\ref{projecao}, there exists $t_0>0$ such that $(t_0^2u^\lambda_{t_0},t_0^2v^\lambda_{t_0}) \in \mathcal{M}_0$. As in \eqref{condicao-t<1}, we can conclude that $J_0(u^\lambda,v^\lambda)\leq0$, and so $t_0 \in(0,1]$ (see Lemma~\ref{projecao}). 
	Combining \eqref{nivel-atingido} and using similar arguments and the fact that $c_\beta \leq c_0$, for any $\beta \geq0$, we obtain 
	$$
	c_0 \leq I_0(t_0^2u^\lambda_{t_0},t_0^2v^\lambda_{t_0}) \leq \lim_{\beta \to 0^+}I_\beta(u_\beta^\lambda,v_\beta^\lambda) = \lim_{\beta \to 0^+} c_\beta \leq c_0.
	$$
	Hence, $t_0=1$ because otherwise $c_0<c_0$. Therefore, $I_0(u^\lambda,v^\lambda)=c_0$, which finishes the proof.	
	\end{proof}
	
	\begin{lemma}
		For $\lambda>e^{1/4}$ fixed, we obtain that $\|u_\beta^\lambda\|_2 \to 0$ and $\|v_\beta^\lambda\|_2 \to 0$, as $\beta\to\infty$.	
	\end{lemma}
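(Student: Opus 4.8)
The strategy is to show that the ground state energy vanishes, $c_\beta \to 0$ as $\beta \to \infty$, and then to read off the decay of the $L^2$-norms directly from the identity \eqref{limitacao-inferior}. Indeed, applying \eqref{limitacao-inferior} to $(u_\beta^\lambda, v_\beta^\lambda) \in \mathcal{M}_\beta$ and using $p \ge 2$ (so that $2p-3 > 0$ and $\psi_\beta \ge 0$) gives
$$
\tfrac{1}{8}\big(\|u_\beta^\lambda\|_2^2 + \|v_\beta^\lambda\|_2^2\big)^2 \le \tfrac{1}{8}\big(\|u_\beta^\lambda\|_2^2 + \|v_\beta^\lambda\|_2^2\big)^2 + \tfrac{2p-3}{4p}\psi_\beta(u_\beta^\lambda,v_\beta^\lambda) = c_\beta .
$$
Since $c_\beta \ge 0$ (see the discussion following \eqref{limitacao-inferior}), once $c_\beta \to 0$ is established it follows at once that $\|u_\beta^\lambda\|_2 \to 0$ and $\|v_\beta^\lambda\|_2 \to 0$. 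Note that the uniform bound $c_\beta \le c_0$ obtained previously is not enough here, so a sharper test-function estimate is needed.

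To estimate $c_\beta$ from above I will use a fixed symmetric test pair. Choose $w \in C_0^\infty(\mathbb{R}^2)\setminus\{0\}$, so that $w \in W^\lambda$ and $\mathcal{V}(w,w)$ is finite, and observe that $\psi_\beta(w,w) = 2(1+\beta)\|w\|_{2p}^{2p}$. By Lemma~\ref{projecao} there is a unique $t_\beta>0$ with $(t_\beta^2 w_{t_\beta}, t_\beta^2 w_{t_\beta}) \in \mathcal{M}_\beta$ and $f(t_\beta) = \max_{t>0} f(t)$, where $f(t) := I_\beta(t^2 w_t, t^2 w_t)$; hence $c_\beta \le f(t_\beta)$. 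Specializing \eqref{funcional-utvt} to $u=v=w$,
$$
f(t) = t^4\|\nabla w\|_2^2 + t^4\log(t^{-1})\|w\|_2^4 + \tfrac{t^4}{4}\mathcal{V}(w,w) - \tfrac{1+\beta}{p}\,t^{4p-2}\|w\|_{2p}^{2p},
$$
and the condition $f'(t_\beta)=0$, which by \eqref{igualdade-J-f} is equivalent to $J_\beta(t_\beta^2 w_{t_\beta}, t_\beta^2 w_{t_\beta})=0$, reduces after dividing \eqref{derivada} by $t_\beta^3$ to
$$
4\|\nabla w\|_2^2 + \big(4\log(t_\beta^{-1})-1\big)\|w\|_2^4 + \mathcal{V}(w,w) = \tfrac{(4p-2)(1+\beta)}{p}\,t_\beta^{4p-6}\|w\|_{2p}^{2p}.
$$

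The heart of the argument — and the only delicate point — is to prove that $t_\beta \to 0$ as $\beta \to \infty$. In the last identity the map $t \mapsto 4\|\nabla w\|_2^2 + (4\log(t^{-1})-1)\|w\|_2^4 + \mathcal{V}(w,w)$ is strictly decreasing and finite on $(0,\infty)$, whereas the map $t \mapsto \tfrac{(4p-2)(1+\beta)}{p}\,t^{4p-6}\|w\|_{2p}^{2p}$ is strictly increasing on $(0,\infty)$ (here $4p-6 \ge 2 > 0$ since $p \ge 2$) and, for each fixed $t>0$, tends to $+\infty$ as $\beta \to \infty$. Given $\varepsilon \in (0,1)$, for $\beta$ large enough the increasing map evaluated at $\varepsilon$ exceeds the decreasing map evaluated at $\varepsilon$; since by Lemma~\ref{projecao} the point $t_\beta$ is the unique place where the two maps coincide, monotonicity forces $t_\beta \in (0,\varepsilon)$. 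Hence $t_\beta \to 0$.

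Finally, since $s^4 \to 0$ and $s^4\log(s^{-1}) \to 0$ as $s \to 0^+$, letting $t_\beta \to 0$ in the formula for $f(t_\beta)$ makes the first three terms tend to $0$, while the last term is $\le 0$; therefore $\limsup_{\beta\to\infty} f(t_\beta) \le 0$. Combined with $0 \le c_\beta \le f(t_\beta)$ this yields $c_\beta \to 0$, and the first paragraph then completes the proof. The only part that remains genuinely routine is the bookkeeping of constants when specializing \eqref{funcional-utvt} and \eqref{derivada} to $u=v=w$.
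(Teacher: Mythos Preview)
Your proof is correct and follows essentially the same approach as the paper: both use a symmetric test pair $(w,w)$ with $w\in C_0^\infty(\mathbb{R}^2)\setminus\{0\}$, project onto $\mathcal{M}_\beta$ via Lemma~\ref{projecao}, show $t_\beta\to 0$, deduce $c_\beta\to 0$, and conclude via \eqref{limitacao-inferior}. The only cosmetic difference is that you establish $t_\beta\to 0$ by a direct monotonicity argument on the two sides of $f'(t_\beta)/t_\beta^3=0$, whereas the paper argues by contradiction (ruling out $t_\beta\to\infty$ and $t_\beta\to\xi>0$ along subsequences); your route is slightly cleaner but substantively identical.
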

	
	\begin{proof}
		Let $\varphi \in C_0^\infty(\mathbb{R}^2) \setminus \{0\} \subset W^\lambda$. By Lemma~\ref{projecao}, for each $\beta \geq 0$ there exists $t_\beta>0$ such that $(t_\beta^2\varphi_{t_\beta},t_\beta^2\varphi_{t_\beta}) \in \mathcal{M}_{\beta}$. Thus,
		$$
		J_\beta(t_\beta^2\varphi_{t_\beta},t_\beta^2\varphi_{t_\beta})=0.
		$$
		We claim that
		\begin{equation}\label{convergencia-t-beta}
			\lim_{\beta\to \infty}t_\beta=0.
		\end{equation}
		{In fact, from \eqref{J-t2ut-t2vt} and \eqref{psi}, we have}
		$$
		4t_\beta^4\|\nabla \varphi\|_{2}^2-t_\beta^4\|\varphi\|_{2}^2 + 4t_\beta^4\log(t_\beta^{-1})\|\varphi\|_{2}^2+t_\beta^4\mathcal{V}(\varphi,\varphi)=\frac{(4p-2)}{2p}t_\beta^{4p-2}\left(2\|\varphi\|_{2p}^{2p}+2\beta\|\varphi\|_{2p}^{2p}\right).
		$$
		Hence,
		\begin{equation}\label{ali}
			t_\beta^{-4p+6}\left(\|\nabla \varphi\|_{2}^2-\|\varphi\|_{2}^2 + 4\log(t_\beta^{-1})\|\varphi\|_{2}^2+\mathcal{V}(\varphi,\varphi)\right) = \frac{(4p-2)}{2p}\left(2\|\varphi\|_{2p}^{2p}+2\beta\|\varphi\|_{2p}^{2p}\right).
		\end{equation}

		Suppose by contradiction that \eqref{convergencia-t-beta} is false. Then, up to a subsequence, we can assume that $\displaystyle\lim_{\beta\to \infty}t_\beta= \infty$ or $\displaystyle\lim_{\beta\to \infty}t_\beta= \xi>0$. Since $p>3/2$ we obtain that $-4p+6<0$. Moreover, $\displaystyle\lim_{\beta\to \infty}\log(t_\beta^{-1})/t_\beta^{4p-6}=0$. Thus, by setting $\displaystyle\lim_{\beta\to \infty}t_\beta= \infty$ in \eqref{ali} we obtain a contradiction. On the other hand, if $\displaystyle\lim_{\beta\to \infty}t_\beta= \xi>0$, then  
		$$
		\infty>\xi^{-4p+6}\left(\|\nabla \varphi\|_{2}^2-\|\varphi\|_{2}^2 + 4\log(\xi^{-1})\|\varphi\|_{2}^2+\mathcal{V}(\varphi,\varphi)\right) = \infty.
		$$
		Therefore the claim is proved.

    In what follows, we can use \eqref{funcional-utvt} together with \eqref{convergencia-t-beta} to obtain
$$
\lim_{\beta\to \infty}c_\beta\leq \lim_{\beta\to \infty} I_\beta(t_\beta^2\varphi_{t_\beta},t_\beta^2\varphi_{t_\beta})\leq \lim_{\beta\to \infty}\left[t_\beta^4\|\nabla \varphi\|_2^2+t_\beta^4\log(t_\beta^{-1})\|\varphi\|_2^2+\frac{t_\beta}{4}\mathcal{V}(\varphi,\varphi)\right]= 0.
$$
Finally, by using \eqref{estimativa-sequencia}, we see that
$$
0 \geq \lim_{\beta\to \infty}c_\beta \geq \frac{1}{8}\lim_{\beta\to \infty} \left(\|u_\beta^\lambda\|_2^2+\|v_\beta^\lambda\|_2^2\right)^2 \geq 0.
$$
This ends the proof.
\end{proof}

	\section{Regularity results and Pohozaev identity}\label{sec-regularidade-Pohozaev}

    In this section, we study the regularity of solutions for System \eqref{S} and a Pohozaev identity.

	\begin{lemma}\label{lema-regularidade-C2}
		Let $(u,v)$ be a weak solution of \eqref{S} obtained in Proposition~\ref{atingibilidade}. Then $u,v \in C^{2,\sigma}_{\mathrm{loc}}(\mathbb{R}^2)$, for some $\sigma\in(0,1)$. Furthermore, if $u \neq 0$, then $u>0$. Similarly, if $v\neq0$, we have $v>0$.
	\end{lemma}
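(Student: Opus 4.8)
The plan is to bootstrap from the weak formulation. First I would recall that the convolution potential $\phi_{u,v}$ enjoys the splitting $\phi_{u,v} = \phi_1 - \phi_2$ induced by the identity $\log r = \log(\lambda+r) - \log(1+\lambda r^{-1})$, and that $u,v \in W^\lambda \hookrightarrow H^1(\mathbb{R}^2) \hookrightarrow L^s(\mathbb{R}^2)$ for every $2 \le s < \infty$. Consequently the nonlinear right-hand sides $|u|^{2p-2}u + \beta|v|^p|u|^{p-2}u$ and $|v|^{2p-2}v + \beta|u|^p|v|^{p-2}v$ lie in $L^q_{\mathrm{loc}}(\mathbb{R}^2)$ for every $q < \infty$. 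The potential term $\phi_{u,v}$ requires a short argument: $\phi_1$ is bounded on bounded sets by the elementary inequality $\log(\lambda+|x-y|) \le \log(\lambda+|x|) + \log(\lambda+|y|)$ together with $u,v \in W^\lambda$, so it is locally bounded (indeed continuous) after using $u^2+v^2 \in L^1$; the singular part $\phi_2$ is controlled by a Riesz-potential estimate ($\log(1+\lambda|x-y|^{-1}) \le \lambda|x-y|^{-1}$) and the Hardy–Littlewood–Sobolev inequality, which gives $\phi_2 \in L^r_{\mathrm{loc}}$ for all $r < \infty$. Hence $\phi_{u,v} u \in L^q_{\mathrm{loc}}$ for every $q < \infty$.

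With $-\Delta u = f_u$ where $f_u \in L^q_{\mathrm{loc}}(\mathbb{R}^2)$ for all $q < \infty$, elliptic $L^p$-regularity (Calderón–Zygmund) gives $u \in W^{2,q}_{\mathrm{loc}}(\mathbb{R}^2)$ for every $q$, and Morrey's embedding then yields $u \in C^{1,\sigma}_{\mathrm{loc}}(\mathbb{R}^2)$ for every $\sigma \in (0,1)$; similarly for $v$. Now I would upgrade: since $u, v$ are now $C^{1,\sigma}_{\mathrm{loc}}$, the nonlinear terms $|u|^{2p-2}u + \beta|v|^p|u|^{p-2}u$ are $C^{1,\sigma}_{\mathrm{loc}}$ when $p \ge 2$ (the map $t \mapsto |t|^{2p-2}t$ is $C^1$ for $p \ge 1$ and the composite is locally Hölder-differentiable since $2p-1 \ge 3 > 1$), and the potential $\phi_{u,v}$ is at least $C^{0,\sigma}_{\mathrm{loc}}$ — indeed $\Delta \phi_{u,v} = -2\pi(u^2+v^2) \in C^{0,\sigma}_{\mathrm{loc}}$, so $\phi_{u,v} \in C^{2,\sigma}_{\mathrm{loc}}$. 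Thus $f_u \in C^{0,\sigma}_{\mathrm{loc}}$, and Schauder estimates give $u \in C^{2,\sigma}_{\mathrm{loc}}(\mathbb{R}^2)$, and likewise $v \in C^{2,\sigma}_{\mathrm{loc}}(\mathbb{R}^2)$. This proves part~$(i)$.

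For the strict positivity in part~$(ii)$: if $u \not\equiv 0$, recall from Proposition~\ref{atingibilidade} that we may take $u \ge 0$. Rewrite the first equation as $-\Delta u + \phi_{u,v}^{-} u = |u|^{2p-2}u + \beta|v|^p|u|^{p-2}u + \phi_{u,v}^{-} u - \phi_{u,v}^{+} u$... more cleanly, set $-\Delta u = u\big(|u|^{2p-2} + \beta|v|^p|u|^{p-2} - \phi_{u,v}\big) =: u\, c(x)$ where $c \in L^\infty_{\mathrm{loc}}$ since $\phi_{u,v}$ is locally bounded; then $-\Delta u + c^{-}(x) u = c^{+}(x) u \ge 0$ with $c^{-} \in L^\infty_{\mathrm{loc}}$, so the strong maximum principle (Harnack inequality, e.g. Gilbarg–Trudinger) applied on every ball forces $u > 0$ everywhere, since a nonnegative supersolution of such an equation that vanishes at an interior point vanishes identically. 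The same argument applies to $v$.

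The main obstacle is the first bootstrap step, specifically verifying that the nonlocal potential $\phi_{u,v}$ is locally bounded (or at least in $L^q_{\mathrm{loc}}$ for all $q$) so that the elliptic regularity machinery can be started — this is where the sign-changing logarithmic kernel, absent the usual positivity of $|x|^{2-N}$, makes the estimate delicate; one must handle the $\log(\lambda+|x-y|)$ part via the $W^\lambda$-norm and the $\log(1+\lambda|x-y|^{-1})$ part via Hardy–Littlewood–Sobolev, exactly as in the estimates \eqref{desigualdadeV1}–\eqref{desigualdadeV2} already established in the variational framework. Once $\phi_{u,v}$ is under control, the remaining bootstrap and the maximum-principle argument are standard.
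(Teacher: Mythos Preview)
Your argument is correct and follows essentially the same two-step strategy as the paper: a bootstrap from $L^q_{\mathrm{loc}}$-integrability of the right-hand side through Calder\'on--Zygmund and then Schauder estimates to reach $C^{2,\sigma}_{\mathrm{loc}}$, followed by the strong maximum principle for positivity. The only tactical differences are that the paper obtains $\phi_{u,v}\in W^{2,q}(B_R)$ in one stroke from the Newtonian-potential regularity theorem (Gilbarg--Trudinger, Theorem~9.9) rather than via your $\phi_1-\phi_2$ splitting, and invokes the Pucci--Serrin form of the strong maximum principle rather than the classical Gilbarg--Trudinger version; neither difference is substantive (and note the sign in your Poisson equation should read $\Delta\phi_{u,v}=2\pi(u^2+v^2)$, which does not affect the argument).
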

	
	\begin{proof}
		Consider $R>0$. Since $u,v \in H^1(B_R(0))$, it follows that $u,v \in L^q(B_R(0))$, for any $q>1$. In light of \cite[Theorem 9.9]{Gilbarg-Trudinger}, we obtain $\phi_{u,v} \in W^{2,q}(B_R(0))$ and $\Delta \phi_{u,v}=2\pi\left(u^2+v^2\right)$ a.e. in $B_R(0)$. Now, noting that
		$$
			-\Delta u+\phi_{u,v} u=|u|^{2p-2}u+\beta|v|^p|u|^{p-2}u, \quad \mbox{in} \ \mathbb{R}^2,
		$$
		one has
		$$
		-\Delta u+u=h, \quad \mbox{in} \ B_R(0),
		$$
		where $h=|u|^{2p-2}u+\beta|v|^p|u|^{p-2}u-\phi_{u,v}+u$. Since $\phi_{u,v} \in W^{2,q}(B_R(0))$ and $u,v \in H^1(B_R(0))$, clearly $h \in L^q(B_R(0))$, for any $q>1$. By classical elliptic regularity theory, we deduce that $u \in W^{2,q}(B_R(0)) \hookrightarrow C^{0,\alpha_1}(B_R(0))$, for some $\alpha_1\in(0,1)$. Similarly, $v \in C^{0,\alpha_2}(B_R(0))$, for some $\alpha_2\in(0,1)$. Thus, we can conclude that $h \in C^{0,\alpha_3}(B_R(0))$, with $\alpha_3\in(0,1)$. Applying the regularity
		theorem of Agmon-Douglis-Nirenberg, $u \in C^{2,\sigma}(B_R(0))$. Similarly, $v \in C^{2,\sigma}(B_R(0))$, completing the first part of result. 
		
		Next, assume that $u \neq0$. Suppose by contradiction that there exists $x_0 \in \mathbb{R}^2$ such that $u(x_0)=0$. Since, $u \neq0$, there exists $x_1 \in \mathbb{R}^2$ such that $u(x_1)>0$. Let $\Omega$ be a ball centered in $x_0$ containing $x_1$ with radius $R>0$. Let $\widetilde{A}:\mathbb{R}^2 \to \mathbb{R}^2$ and $\widetilde{B}:\Omega \times \mathbb{R}^+ \to \mathbb{R}$ given by
		$$
		\widetilde{A}(\xi):= \xi \quad \mbox{and} \quad \widetilde{B}(x,z):=-\left(\int_{\mathbb{R}^2}\log(\lambda+|x-y|)\left(u^2(y)+v^2(y)\right)\dy\right)z.
		$$
		Since $(u,v)$ is a solution of \eqref{S}, with $u\geq0$, one deduces
		$$
		\mathrm{div}\widetilde{A}\left(\nabla u\right)+\widetilde{B}(x,u)=-|u|^{2p-2}u-\beta|v|^p|u|^{p-2}u-\zeta(x)u \leq 0, \quad \mbox{in} \ \Omega,
		$$
		where
		$$
		\zeta(x):=\int_{\mathbb{R}^2}\log(\lambda+|x-y|^{-1})\left(u^2(y)+v^2(y)\right)\dy.
		$$
		 Clearly $\widetilde{A} \in L^\infty_{\mathrm{loc}}(\mathbb{R}^2)$. Moreover, by \eqref{des-log-elementar} we obtain
		 \begin{equation}\label{conta-B}
		 	\left| \widetilde{B}(x,z) \right|  \leq \left[\log(\lambda+|x_0|+R)\left(\|u\|_2^2+\|v\|_2^2\right)+\left(\|u\|_{\lambda,*}^2+\|v\|_{\lambda,*}^2\right)\right]z=C_1z,
		 \end{equation}
		 and so $\widetilde{B} \in L^\infty(\Omega)$. 
		 
		 In order to apply the Strong Maximum
		 Principle \cite[Theorem 2.5.1]{Pucci}, we need to show that there exist constants $a_1,a_2>0$ and $b_1\geq 0$ such that, for any $x \in \Omega$, $0<z\leq 1$, and $|\xi|\leq 1$, 
		 $$
		 \left\langle \widetilde{A}(\xi),\xi\right\rangle \geq a_1|\xi|^2, \quad \left| \widetilde{A}(\xi) \right| \leq a_3|\xi|,  \quad \mbox{and} \quad \widetilde{B}(x,z) \geq -b_1z.
		 $$
		 The proof of the first two inequalities is immediate. 
		 The third estimate is a consequence of \eqref{conta-B}. Therefore, by Strong Maximum
		 Principle, either $u=0$ in $\Omega$ or $u>0$ in $\Omega$, which is a contradiction.
		\end{proof}

		 \begin{lemma}
		 	If $(u,v)$ is a weak solution of \eqref{S}, then \eqref{identidade-Pohozaev} holds.
		 \end{lemma}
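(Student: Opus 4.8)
The plan is to derive the identity by testing the system against the generator of dilations, $x\cdot\nabla u$ and $x\cdot\nabla v$, and carefully tracking how each term scales. Since $(u,v)$ is a weak solution, by Lemma~\ref{lema-regularidade-C2} we already know $u,v\in C^{2,\sigma}_{\mathrm{loc}}(\mathbb{R}^2)$, so the computations can be carried out classically on balls $B_R(0)$ and then the boundary terms sent to zero along a suitable sequence $R_n\to\infty$. The cleanest route, however, is variational: since $t\mapsto I_\beta(t^2u_t,t^2v_t)$ is, by Lemma~\ref{projecao}, differentiable with the derivative given explicitly in \eqref{derivada}, and since we may identify $f'(1)$ with a combination of $I_\beta'(u,v)$ applied to the "dilation direction" $\frac{d}{dt}\big|_{t=1}(t^2u_t,t^2v_t)=(2u+x\cdot\nabla u,\,2v+x\cdot\nabla v)$, the vanishing of that directional derivative (because $I_\beta'(u,v)=0$) yields exactly $f'(1)=0$. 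Reading off \eqref{derivada} at $t=1$ then gives
$$
2\left(\|\nabla u\|_2^2+\|\nabla v\|_2^2\right)-\frac{1}{4}\left(\|u\|_2^2+\|v\|_2^2\right)^2+\mathcal{V}(u,v)-\frac{4p-2}{2p}\psi_\beta(u,v)=0,
$$
which is precisely $J_\beta(u,v)=0$. Combining this with the Nehari-type identity $I_\beta'(u,v)(u,v)=0$, i.e. $\|\nabla u\|_2^2+\|\nabla v\|_2^2+\mathcal{V}(u,v)-\psi_\beta(u,v)=0$ from \eqref{derivada-Nehari}, and eliminating the gradient term via $J_\beta=2I_\beta'(u,v)(u,v)-\mathcal{P}_\beta$, we obtain $\mathcal{P}_\beta(u,v)=0$, which is \eqref{identidade-Pohozaev}.

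More concretely, I would proceed as follows. First, justify that $(2u+x\cdot\nabla u,\,2v+x\cdot\nabla v)$ is an admissible test direction: using the $C^{2,\sigma}_{\mathrm{loc}}$ regularity, the $H^1$-membership of $(u,v)$, and the decay encoded in $u,v\in W^\lambda$ (together with the estimates \eqref{desigualdadeV1}–\eqref{desigualdadeV2} for the nonlocal term), show that all integrals appearing in $I_\beta'(u,v)(2u+x\cdot\nabla u,2v+x\cdot\nabla v)$ converge absolutely, so that a truncation-and-limit argument is legitimate. Second, compute each piece of this directional derivative by integration by parts on $B_R$: the Dirichlet part contributes $-2(\|\nabla u\|_2^2+\|\nabla v\|_2^2)$ plus a boundary term $O\!\left(\int_{\partial B_R}R|\nabla u|^2\right)$ which vanishes along a good sequence $R_n\to\infty$ since $\nabla u\in L^2$; the nonlinear part $\int(|u|^{2p-2}u+\beta|v|^p|u|^{p-2}u)(2u+x\cdot\nabla u)$ integrates by parts to $\frac{2}{p}\psi_\beta(u,v)$ up to vanishing boundary terms (here one uses $u,v\in L^{2p}$); and the nonlocal term $\int\phi_{u,v}u(2u+x\cdot\nabla u)+\phi_{u,v}v(2v+x\cdot\nabla v)$ is the delicate one.

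The main obstacle is precisely the nonlocal (logarithmic Choquard) term. One has to show that
$$
\int_{\mathbb{R}^2}\phi_{u,v}\bigl(u(2u+x\cdot\nabla u)+v(2v+x\cdot\nabla v)\bigr)\dx
=\mathcal{V}(u,v)+\frac14\bigl(\|u\|_2^2+\|v\|_2^2\bigr)^2,
$$
and the appearance of the extra $\frac14(\|u\|_2^2+\|v\|_2^2)^2$ term (which distinguishes the planar case from $N\ge 3$) comes from differentiating the logarithmic kernel: under $x\mapsto tx$ one has $\log|t^{-1}x-t^{-1}y|=\log(t^{-1})+\log|x-y|$, and differentiating $t^4\log(t^{-1})(\|u\|_2^2+\|v\|_2^2)^2$ at $t=1$ produces exactly $-(\|u\|_2^2+\|v\|_2^2)^2$, while the symmetric rearrangement of the double integral $\int\int\log|x-y|(\cdots)(x\cdot\nabla(\cdots))(y)$ requires using the symmetry $x\leftrightarrow y$ and the identity $x\cdot\nabla_x|x-y|+y\cdot\nabla_y|x-y|=|x-y|$, hence $(x\cdot\nabla_x+y\cdot\nabla_y)\log|x-y|=1$. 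Making the improper double integral manipulations rigorous — splitting the kernel as in \eqref{decomposicao} into the well-behaved $\mathcal{V}_1$ (controlled by $\|\cdot\|_{W^\lambda}$) and the singular-at-origin $\mathcal{V}_2$ (controlled by Hardy–Littlewood–Sobolev, Proposition~\ref{hls}) so that Fubini and differentiation under the integral sign are justified — is the technical heart of the argument. Given these estimates, assembling the pieces yields $\mathcal{P}_\beta(u,v)=0$.
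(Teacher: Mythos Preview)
Your concrete plan --- testing on balls $B_R$, integrating by parts, and sending the boundary terms to zero along a subsequence $R_n\to\infty$ --- is essentially the paper's proof. The paper, however, tests only against $x\cdot\nabla u$ and $x\cdot\nabla v$ (not the full dilation generator $2u+x\cdot\nabla u$), and in two dimensions the Laplacian piece $\int_{B_R}\Delta u\,(x\cdot\nabla u)$ is a pure divergence, so it vanishes in the limit; this is why $\mathcal{P}_\beta$ contains no gradient norms and why the paper lands directly on $\mathcal{P}_\beta(u,v)=0$ without detouring through $J_\beta(u,v)=0$. The nonlocal term is handled exactly by the symmetrization $x\leftrightarrow y$ you describe, written there as the explicit computation $\tfrac12\int u^2\,(x\cdot\nabla\phi_{u,v})=\tfrac14(\|u\|_2^2+\|v\|_2^2)\|u\|_2^2$; the vanishing of the boundary integrals uses a standard sequence lemma (if $\mathfrak{F}\in C\cap L^1$ then $R_n\int_{\partial B_{R_n}}|\mathfrak{F}|\to 0$ along some $R_n\to\infty$) together with the $C^2$ regularity of $u,v,\phi_{u,v}$ from Lemma~\ref{lema-regularidade-C2}.

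Two cautions. First, the ``variational shortcut'' you lead with --- inferring $f'(1)=0$ from $I_\beta'(u,v)=0$ via the chain rule --- is only formal: you have no a priori reason for $x\cdot\nabla u\in W^\lambda$ (or even $L^2$), so $I_\beta'(u,v)$ cannot be applied to the dilation direction as a legitimate test vector. The truncation-on-$B_R$ argument is precisely what replaces this missing regularity, so you should not present the shortcut as an independent route. Second, check the sign in your nonlocal identity: the correct value is
\[
\int_{\mathbb{R}^2}\phi_{u,v}\bigl(u(2u+x\cdot\nabla u)+v(2v+x\cdot\nabla v)\bigr)\dx
=\mathcal{V}(u,v)-\tfrac14\bigl(\|u\|_2^2+\|v\|_2^2\bigr)^2,
\]
with a minus, consistent with \eqref{Juv} and \eqref{derivada} at $t=1$.
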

		 
		 \begin{proof}
		 	Since $(u,v)$ is a weak solution of \eqref{S}, we have
		 	$$
		 	\left(-\Delta u+\phi_{u,v} u\right)\left(x \cdot \nabla u\right)=\left(|u|^{2p-2}u+\beta|v|^p|u|^{p-2}u\right)\left(x \cdot \nabla u\right), \quad \mbox{in} \ \mathbb{R}^2.
		 	$$
		 	A simple calculation shows that
		 	$$
		 	\Delta u \left(x \cdot \nabla u\right)= \mathrm{div}\left[\left(\left[x \cdot \nabla u\right]\nabla u\right)-x\left(\frac{|\nabla u|^2}{2}\right)\right],
		 	$$
		 	$$
		 	\phi_{u,v} u \left(x \cdot \nabla u\right)=\frac{1}{2}\mathrm{div}\left[ \phi_{u,v} u^2x\right]-\phi_{u,v}u^2-\frac{1}{2}u^2\left(x \cdot \nabla \phi_{u,v}\right),
		 	$$
		 	and
		 	$$
		 \mathfrak{f}(u)\left(x \cdot \nabla u\right)=\mathrm{div}\left[x \mathcal{F}(u)\right]-2\mathcal{F}(u),
		 	$$
		 	where $\mathcal{F}(s):=\int_{0}^{s} \mathfrak{f}(t)\,\mathrm{d}t$. The last expression implies that
		 	$$
		 	|u|^{2p-2}u\left(x \cdot \nabla u\right)=\mathrm{div}\left[x \frac{|u|^{2p}}{2p}\right]-\frac{|u|^{2p}}{p}
		 	$$ 
		 	and
		 	$$
		 	|v|^p|u|^{p-2}u\left(x \cdot \nabla u\right)=\mathrm{div}\left[x\frac{|v|^p|u|^p}{p}\right]-2\frac{|v|^p|u|^p}{p}.
		 	$$
		 	Using the Divergence theorem on a ball $B_R(0)$, we reach
		 	$$
		 	\begin{aligned}
		 		&-\int_{\partial B_R(0)}\left[\left(\left[x \cdot \nabla u\right]\nabla u\right)-x\left(\frac{|\nabla u|^2}{2}\right)\right]\cdot \nu\,\mathrm{d}\sigma+\frac{1}{2}\int_{\partial B_R(0)}\phi_{u,v} u^2(x\cdot \nu )\,\mathrm{d}\sigma \\
		 		&\quad-\int_{B_R(0)}\left(\phi_{u,v}u^2+\frac{1}{2}u^2\left(x \cdot \nabla \phi_{u,v}\right)\right)\dx
		 		\\
		 		&=\frac{1}{2p}\int_{\partial B_R(0)}|u|^{2p}(x \cdot \nu)\,\mathrm{d}\sigma-\frac{1}{p}\int_{B_R(0)}|u|^{2p}\dx+\frac{\beta}{p}\int_{\partial B_R(0)}|u|^p|v|^p(x\cdot \nu)\,\mathrm{d}\sigma
		 		\\
		 		&\quad-\frac{2\beta}{p}\int_{B_R(0)}|u|^p|v|^p\dx,
		 	\end{aligned}
		 	$$
		 	where $\nu=\nu(x)=x/R$ is the outward normal of $\partial B_R(0)$. By \cite[Lemma 2.1]{Severo-Gloss-Edcarlos}, if $\mathfrak{F} \in C(\mathbb{R}^2)\cap L^1(\mathbb{R}^2)$, then there exists a sequence $(R_n) \subset \mathbb{R}$, with $R_n \to \infty$ such that
		 	$$
		 	R_n \int_{\partial B_{R_n}(0)}|\mathfrak{F}(\sigma)|\,\mathrm{d}\sigma \to 0, \quad \mbox{as} \ \ n\to \infty.
		 	$$
		 	From this convergence and by the Cauchy-Schwarz inequality, we get
		 	$$
		 	\left| \int_{\partial B_{R_n}(0)}\left[\left(\left[x \cdot \nabla u\right]\nabla u\right)-x\left(\frac{|\nabla u|^2}{2}\right)\right]\cdot \nu\,\mathrm{d}\sigma \right| \leq \frac{R_n}{2}\int_{\partial B_{R_n}(0)}|\nabla u|^2\,\mathrm{d}\sigma \to 0, \quad \mbox{as} \  \ n \to \infty,
		 	$$
		 where we used $u \in C^2(\mathbb{R}^2)$ (see Lemma~\ref{lema-regularidade-C2}). By the regularity of $v$ and $\phi_{u,v}$ obtained in Lemma~\ref{lema-regularidade-C2}, we can conclude that the other integrals over $\partial B_{R_n}(0)$ converge to zero. Thus,
		 $$
		 \int_{\mathbb{R}^2}\left(\phi_{u,v}u^2+\frac{1}{2}u^2\left(x \cdot \nabla \phi_{u,v}\right)\right)\dx=\frac{1}{p}\int_{\mathbb{R}^2}|u|^{2p}\dx+\frac{2\beta}{p}\int_{\mathbb{R}^2}|u|^p|v|^p\dx.
		 $$
		 Noting that
		 $$
		 \begin{aligned}
		 	\frac{1}{2}\int_{\mathbb{R}^2}u^2 \left(x \cdot \nabla \phi_{u,v} \right)\dx&=\frac{1}{2}\int_{\mathbb{R}^2}\int_{\mathbb{R}^2}\frac{|x|^2-x\cdot y}{|x-y|^2}\left(u^2(y)+v^2(y)\right)u^2(x)\dy\dx
		 	\\
		 	&=\frac{1}{4}\int_{\mathbb{R}^2}\int_{\mathbb{R}^2}\frac{|x|^2-x\cdot y}{|x-y|^2}\left(u^2(y)+v^2(y)\right)u^2(x)\dy\dx 
		 	\\
		 	&\quad+  \frac{1}{4}\int_{\mathbb{R}^2}\int_{\mathbb{R}^2}\frac{|y|^2-y\cdot x}{|y-x|^2}\left(u^2(y)+v^2(y)\right)u^2(x)\dx\dy
		 	\\
		 	&=\frac{1}{4}\int_{\mathbb{R}^2}\int_{\mathbb{R}^2}\frac{|x|^2-2x\cdot y +|y|^2}{|x-y|^2}\left(u^2(y)+v^2(y)\right)u^2(x)\dy\dx \\
		 	&=\frac{1}{4}\left(\|u\|_2^2+\|v\|_2^2\right)\|u\|_2^2,
		 \end{aligned}
		 $$
		 we obtain
		 $$
		 \int_{\mathbb{R}^2}\int_{\mathbb{R}^2}\log(|x-y|)\left(u^2(y)+v^2(y)\right)u^2(x)\dy+\frac{1}{4}\left(\|u\|_2^2+\|v\|_2^2\right)\|u\|_2^2=\frac{1}{p}\|u\|_{2p}^{2p}+\frac{2\beta}{p}\int_{\mathbb{R}^2}|uv|^p\dx.
		 $$
		 Similarly,
		 $$
		 \int_{\mathbb{R}^2}\int_{\mathbb{R}^2}\log(|x-y|)\left(u^2(y)+v^2(y)\right)v^2(x)\dy+\frac{1}{4}\left(\|u\|_2^2+\|v\|_2^2\right)\|v\|_2^2=\frac{1}{p}\|v\|_{2p}^{2p}+\frac{2\beta}{p}\int_{\mathbb{R}^2}|uv|^p\dx.
		 $$
		 Therefore,
		 $$
		 \mathcal{V}(u,v)+\frac{1}{4}\left(\|u\|_2^2+\|v\|_2^2\right)^2=\frac{1}{p}\psi_\beta(u,v),
		 $$
		 completing the proof.
		\end{proof}

	\section{Proof of Theorem~\ref{vetorial-semitrivial}}\label{semitrivial-vetorial}

        This section is devoted to study the existence of vectorial and semi-trivial solutions.
	
	\subsection{Vectorial solution}
	\quad

	\begin{lemma}[Vectorial solution]\label{vetorial}
		For each $\beta>2^{p-1}-1$, the pair $(u,v)$ obtained in Proposition~\ref{atingibilidade} is a vector solution, that is, $u\neq0$ and $v\neq0$.
	\end{lemma}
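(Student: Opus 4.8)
The plan is to argue by contradiction, exploiting the structural feature that the nonlocal term $\mathcal{V}$ and the $L^2$-terms of $I_\beta$ depend only on the density $u^2+v^2$, so a ``diagonal'' pair $(w,w)$ carries exactly the same gradient, logarithmic and $\mathcal{V}$-energy as the semitrivial pair $(\sqrt2\,w,0)$, whereas the coupled nonlinearity $\psi_\beta$ is strictly larger on $(w,w)$ precisely when $\beta>2^{p-1}-1$.

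Concretely, suppose the ground state $(u,v)=(u_\beta,v_\beta)\in\mathcal{M}_\beta$ furnished by Proposition~\ref{atingibilidade} were semitrivial. Since $(0,0)\notin\mathcal{M}_\beta$, exactly one component vanishes, and by the symmetry of the system we may assume $v=0$; then $u\neq0$ solves the scalar problem \eqref{P1} and $(u,0)\in\mathcal{M}_\beta$ with $I_\beta(u,0)=c_\beta$. Set $w:=u/\sqrt2$. Then $w^2+w^2=u^2=u^2+0^2$, which gives $\mathcal{V}(w,w)=\mathcal{V}(u,0)$, together with $\|w\|_2^2+\|w\|_2^2=\|u\|_2^2$ and $\|\nabla w\|_2^2+\|\nabla w\|_2^2=\|\nabla u\|_2^2$. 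On the other hand, from \eqref{psi} one computes $\psi_\beta(w,w)=(2+2\beta)\|w\|_{2p}^{2p}=\frac{1+\beta}{2^{p-1}}\,\|u\|_{2p}^{2p}$, while $\psi_\beta(u,0)=\|u\|_{2p}^{2p}$; since $u\neq0$ and $2+2\beta>2^{p}\iff\beta>2^{p-1}-1$, we obtain the strict inequality $\psi_\beta(w,w)>\psi_\beta(u,0)>0$.

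Next I would plug $(w,w)$ into the fiber formula \eqref{funcional-utvt}. Comparing $I_\beta(t^2w_t,t^2w_t)$ with $I_\beta(t^2u_t,0)$ term by term, the gradient, logarithmic and $\mathcal{V}$ contributions coincide and the only difference is the $-\tfrac{t^{4p-2}}{2p}\psi_\beta$ term, so $I_\beta(t^2w_t,t^2w_t)<I_\beta(t^2u_t,0)$ for every $t>0$. By Lemma~\ref{projecao} there is $t_0>0$ with $(t_0^2w_{t_0},t_0^2w_{t_0})\in\mathcal{M}_\beta$, hence $c_\beta\le I_\beta(t_0^2w_{t_0},t_0^2w_{t_0})$. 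Moreover, applying Lemma~\ref{projecao} to $(u,0)\in\mathcal{M}_\beta$ and using the uniqueness of the projection time forces the projection of $(u,0)$ to occur at $t=1$, so $\max_{t>0}I_\beta(t^2u_t,0)=I_\beta(u,0)=c_\beta$. Combining these facts, $c_\beta\le I_\beta(t_0^2w_{t_0},t_0^2w_{t_0})<I_\beta(t_0^2u_{t_0},0)\le c_\beta$, a contradiction. Therefore $u\neq0$ and $v\neq0$, and the symmetric argument disposes of the case $u=0$.

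The only delicate point — and the one I would verify most carefully — is the bookkeeping showing that the substitution $u\mapsto(u/\sqrt2,u/\sqrt2)$ leaves the quadratic and nonlocal parts of $I_\beta$ unchanged along the entire fiber $t\mapsto(t^2\cdot_t,t^2\cdot_t)$, combined with the sharp threshold identity $2+2\beta>2^{p}\iff\beta>2^{p-1}-1$ that produces $\psi_\beta(w,w)>\psi_\beta(u,0)$; once these are in place the conclusion follows purely from the projection lemma and the definition of $c_\beta$.
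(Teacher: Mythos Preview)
Your proof is correct and follows essentially the same approach as the paper: the paper introduces the test pair $(\widetilde u,\widetilde v)=(u\cos\theta,u\sin\theta)$ and then specializes to $\theta=\pi/4$, which is exactly your choice $(w,w)=(u/\sqrt2,u/\sqrt2)$, and the remaining steps (identical gradient, $L^2$, and $\mathcal V$ terms; strict increase of $\psi_\beta$ when $\beta>2^{p-1}-1$; projection via Lemma~\ref{projecao}; contradiction with $c_\beta$) coincide line by line.
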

	
	\begin{proof}
		Suppose by contradiction
		that $v=0$. Consider
		$$
		\widetilde{u}:=u \cos\theta \quad \mbox{and} \quad \widetilde{v}:=u\sin\theta.
		$$
		It is easy to see that 
		$$
		\|\nabla\widetilde{u}\|_2^2+\|\nabla\widetilde{v}\|_2^2 =\|\nabla u\|_2^2, \quad \left(\|\widetilde{u}\|_2^2+\|\widetilde{v}\|_2^2\right)^2 = \|u\|_2^4 \quad \mbox{and} \quad \mathcal{V}(\widetilde{u},\widetilde{v}) = \mathcal{V}(u,0).
		$$
		Since $(\widetilde{u},\widetilde{v})\neq(0,0)$, by Lemma~\ref{projecao}, there exists a unique $t_0>0$ such that $(t_0^2\widetilde{u}_{t_0},t_0^2\widetilde{v}_{t_0}) \in \mathcal{M}_\beta$. Thus, \eqref{funcional-utvt} combined with the last three identities imply that
		\begin{equation}\label{I-beta-u,0}
			c_\beta\leq I_\beta(t_0^2\widetilde{u}_{t_0},t_0^2\widetilde{v}_{t_0}) = \frac{t_0^4}{2}\|\nabla u\|_2^2+\frac{t_0^4\log(t_0^{-1})}{4}\|u\|_2^4+\frac{t_0^4}{4}\mathcal{V}(u,0)-\frac{t_0^{4p-2}}{2p}\psi_\beta(\widetilde{u},\widetilde{v}).
		\end{equation}
		
		Now, we estimate $\psi_\beta(\widetilde{u},\widetilde{v})$. {From \eqref{psi},}
		$$
		\psi_\beta(\widetilde{u},\widetilde{v})  = \left[\cos^{2p}\theta + \sin^{2p}\theta+2\beta\cos^p\theta\sin^p\theta\right]\|u\|_{2p}^{2p}.
		$$
		Choosing $\theta=\pi/4$, there holds
		$$
		\psi_\beta(\widetilde{u},\widetilde{v})  = (\beta+1)2^{1-p}\|u\|_{2p}^{2p}>\|u\|_{2p}^{2p}=\psi_\beta(u,0), \quad \mbox{if} \ \ \beta>2^{p-1}-1.
		$$
		Hence, the above inequality together with \eqref{I-beta-u,0}, \eqref{funcional-utvt}, and \eqref{maximo} imply that
		$$
		\begin{aligned}
			c_\beta&<\frac{t_0^4}{2}\|\nabla u\|_2^2+\frac{t_0^4\log(t_0^{-1})}{4}\|u\|_2^4+\frac{t_0^4}{4}\mathcal{V}(u,0)-\frac{t_0^{4p-2}}{2p}\psi_\beta(u,0)\\
            &=I_\beta(t_0^2u_{t_0},0)
			\\
			&\leq \max_{t>0}I_\beta(t^2u_{t},0)=I_\beta(u,0)=c_\beta,
		\end{aligned}
		$$
		which is a contradiction. Therefore, $u\neq 0$ and $v \neq0$. 
	\end{proof}
	
	\begin{lemma}
		Let $u$ be a solution of \eqref{P1}. Then, $\beta=2^{p-1}-1$, if and only if, $(u/\sqrt{2},u/\sqrt{2})$ is a solution of \eqref{S}.
	\end{lemma}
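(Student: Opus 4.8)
The plan is a direct verification, exploiting the fact that the coupling potential collapses onto the scalar one along the diagonal. Write $(U,V):=(u/\sqrt2,\,u/\sqrt2)$. Since $U^2+V^2=u^2$, the formula \eqref{potential-new} gives $\phi_{U,V}=\phi_{u}$, where $\phi_u=\phi_{u,0}$ is exactly the potential appearing in \eqref{P1}. Thus, testing whether $(U,V)$ solves \eqref{S} reduces to comparing the two sides of a single scalar equation.

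Next I would substitute $(U,V)$ into the first equation of \eqref{S}. Using linearity of $-\Delta\,\cdot\,+\,\phi_u\,\cdot\,$ together with \eqref{P1},
$$-\Delta U+\phi_{U,V}U=\frac{1}{\sqrt2}\bigl(-\Delta u+\phi_u u\bigr)=\frac{1}{\sqrt2}\,|u|^{2p-2}u.$$
On the other hand, since $|U|=|V|=|u|/\sqrt2$, the nonlinear terms combine as
$$|U|^{2p-2}U+\beta|V|^{p}|U|^{p-2}U=(1+\beta)\,|u/\sqrt2|^{2p-2}\frac{u}{\sqrt2}=\frac{1+\beta}{2^{p-1}}\cdot\frac{1}{\sqrt2}\,|u|^{2p-2}u.$$
By the symmetry $U=V$, the second equation of \eqref{S} produces exactly the same identity. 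Hence $(U,V)$ solves \eqref{S} if and only if $\dfrac{1}{\sqrt2}|u|^{2p-2}u=\dfrac{1+\beta}{2^{p-1}}\cdot\dfrac{1}{\sqrt2}|u|^{2p-2}u$ pointwise in $\mathbb{R}^2$.

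Finally, since $u$ is a nontrivial solution of \eqref{P1} --- indeed $u>0$ in $\mathbb{R}^2$ by Theorem~\ref{teo-Pohozaev} --- the scalar factor $|u|^{2p-2}u$ is not identically zero, so the last displayed identity holds precisely when $\dfrac{1+\beta}{2^{p-1}}=1$, that is, $\beta=2^{p-1}-1$. This establishes both implications simultaneously. There is no genuine obstacle in this argument: it amounts to a one-line algebraic identity together with the reduction $\phi_{U,V}=\phi_u$, the only point deserving care being the observation that $u\not\equiv0$, which is what allows the scalar factor to be cancelled in the ``only if'' direction.
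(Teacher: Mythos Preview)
Your proof is correct and follows essentially the same approach as the paper: both reduce via $\phi_{U,V}=\phi_u$ to the single scalar identity $(1+\beta)2^{1-p}=1$, the only cosmetic difference being that you handle both implications at once while the paper treats them separately. Your explicit remark that $u\not\equiv 0$ is needed to cancel the scalar factor in the ``only if'' direction is a point the paper leaves implicit.
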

	
	\begin{proof}
		Firstly, assume that $\beta=2^{p-1}-1$. Since $u$ is a solution of \eqref{P1}, we can multiply by $1/\sqrt{2}$ to get
		$$
		-\Delta\left(\frac{u}{\sqrt{2}}\right)+\phi_u \left(\frac{u}{\sqrt{2}}\right) = \frac{1}{\sqrt{2}}|u|^{2p-2}u=2^{p-1}\left|\frac{u}{\sqrt{2}} \right|^{2p-2}\left(\frac{u}{\sqrt{2}}\right).
		$$
	Noting that $\phi_{u/\sqrt{2},u/\sqrt{2}}=\phi_{u}$, one deduces
	$$
	\begin{aligned}
		-\Delta\left(\frac{u}{\sqrt{2}}\right)+\phi_{u/\sqrt{2},u/\sqrt{2}}\left(\frac{u}{\sqrt{2}}\right) &= 2^{p-1}\left|\frac{u}{\sqrt{2}} \right|^{2p-2}\left(\frac{u}{\sqrt{2}}\right) \\
        & = (1+\beta)\left|\frac{u}{\sqrt{2}} \right|^{2p-2}\left(\frac{u}{\sqrt{2}}\right)
		\\
		&=\left|\frac{u}{\sqrt{2}} \right|^{2p-2}\left(\frac{u}{\sqrt{2}}\right)+\beta\left|\frac{u}{\sqrt{2}} \right|\left|\frac{u}{\sqrt{2}} \right|^{p-2}\left(\frac{u}{\sqrt{2}}\right).
	\end{aligned}
	$$
	Consequently, the pair $(u/\sqrt{2},u/\sqrt{2})$ is a solution of \eqref{S}.

	On the other hand, assume that $(u/\sqrt{2},u/\sqrt{2})$ is a solution of \eqref{S}. In view of $\phi_{u/\sqrt{2},u/\sqrt{2}}=\phi_{u}$ and the above identities, we infer that
	$$
	2^{p-1}\left|\frac{u}{\sqrt{2}} \right|^{2p-2}\left(\frac{u}{\sqrt{2}}\right)=(1+\beta)\left|\frac{u}{\sqrt{2}} \right|^{2p-2}\left(\frac{u}{\sqrt{2}}\right).
	$$
	Therefore, $2^{p-1}=\beta+1$, completing the proof.
	\end{proof}

	\subsection{Semi-trivial solution}
	\quad

	\begin{lemma}\label{lema-funcao-h}
		Let $p\geq2$ and $0\leq \beta<2^{p-1}-1$. Then the function defined by $h_\beta(s):=s^p+(1-s)^p+2\beta s^{p/2}(1-s)^{p/2}$, with $s\in[0,1]$, satisfies 
		$$
		h_\beta(s)<1, \quad \forall s \in(0,1).
		$$
	\end{lemma}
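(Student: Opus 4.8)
The plan is to reduce first to the threshold value $\beta=2^{p-1}-1$ and then to a one–variable inequality in the exponent. Observe that
$$
h_\beta(s)=h_{2^{p-1}-1}(s)-2\big(2^{p-1}-1-\beta\big)\big(s(1-s)\big)^{p/2},
$$
and since $2^{p-1}-1-\beta>0$ and $(s(1-s))^{p/2}>0$ for $s\in(0,1)$, it is enough to prove the non-strict inequality $h_{2^{p-1}-1}(s)\le 1$ for every $s\in[0,1]$; the strict inequality asserted in the lemma is then automatic. Because $1=(s+(1-s))^p$ and $h_{2^{p-1}-1}(s)=s^p+(1-s)^p+(2^p-2)(s(1-s))^{p/2}$, the inequality $h_{2^{p-1}-1}(s)\le1$ is equivalent to
$$
(2^p-2)\,\big(s(1-s)\big)^{p/2}\ \le\ 1-s^p-(1-s)^p ,\qquad s\in(0,1),
$$
the cases $s=0,1$ being trivial equalities. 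Setting $\tau=\sqrt{s/(1-s)}$ and dividing through by $(s(1-s))^{p/2}$, a direct computation recasts this as
$$
\Psi(\tau):=\Big(\tau+\tfrac1\tau\Big)^{p}-\tau^{p}-\tau^{-p}\ \ge\ 2^p-2=\Psi(1),\qquad \tau>0 .
$$

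I would then prove that $\tau=1$ is the global minimum of $\Psi$ on $(0,\infty)$. Since $\Psi(\tau)=\Psi(1/\tau)$, it suffices to show $\Psi'(\tau)\ge0$ for $\tau\ge 1$. Multiplying $\Psi'(\tau)$ by the positive factor $\tau^{2}/(\tau^{2}-1)$ and writing $y=\tau^{2}>1$, one checks that $\Psi'(\tau)\ge0$ is equivalent to $(y-1)(y+1)^{p-1}\ge y^{p}-1$, which (put $x=1/y\in(0,1)$ and multiply by $y^p$) is exactly
$$
1-x^{p}\ \le\ (1-x)(1+x)^{p-1},\qquad x\in[0,1] .
$$
Thus the lemma reduces to this last inequality, and I expect it to be the genuine obstacle: for $p=2$ it is merely the identity $1-x^2=(1-x)(1+x)$, so the whole point is that the hypothesis $p\ge 2$ forces it to hold, and the proof must be sensitive to that.

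For the inequality $1-x^{p}\le(1-x)(1+x)^{p-1}$ the plan is to regard the exponent as the variable. Fix $x\in(0,1)$ and set $G_x(q):=(1-x)(1+x)^{q-1}-1+x^{q}$, so $G_x(2)=0$ and
$$
G_x''(q)=(1-x)\ln^{2}(1+x)\,(1+x)^{q-1}+x^{q}\ln^{2}x>0 ;
$$
hence $G_x$ is strictly convex in $q$, and it suffices to check $G_x'(2)\ge0$, for then $G_x'(q)\ge G_x'(2)\ge0$ on $[2,\infty)$ and so $G_x(p)\ge G_x(2)=0$. Now $G_x'(2)=(1-x^{2})\ln(1+x)+x^{2}\ln x=:k(x)$, with $k(0^{+})=k(1)=0$, and
$$
k''(x)=2\big(\ln u+1-u\big),\qquad u=\tfrac{x}{1+x}\in\big(0,\tfrac12\big),
$$
which is negative because $\ln u\le u-1$. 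Therefore $k'$ is strictly decreasing, and since $k'(0^{+})=1>0>1-2\ln2=k'(1)$, the function $k$ increases then decreases on $(0,1)$ and has vanishing boundary values, so $k\ge0$ on $[0,1]$. This yields $1-x^p\le(1-x)(1+x)^{p-1}$, hence $\Psi(\tau)\ge 2^p-2$, hence $h_{2^{p-1}-1}\le1$, and finally $h_\beta(s)<1$ on $(0,1)$ whenever $\beta<2^{p-1}-1$. (Alternatively one can prove $1-x^p\le(1-x)(1+x)^{p-1}$ by analysing $\phi(x):=(1-x)(1+x)^{p-1}-1+x^{p}$ directly, using that $\phi(0)=\phi(1)=0$, that $\phi'>0$ on $[0,(p-2)/p)$, and that on $[(p-2)/p,1)$ one has $\phi'(x)=x^{p-1}\big(p-\chi(x)\big)$ with $\chi(x)=(1+x)^{p-2}\,\frac{px-(p-2)}{x^{p-1}}$ strictly increasing, its logarithmic derivative collapsing to $\frac{(p-1)(p-2)(1-x)}{x(1+x)(px-(p-2))}>0$; then $\phi$ increases and then decreases, so $\phi\ge0$ on $[0,1]$.)
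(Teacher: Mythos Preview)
Your argument is correct and complete. The only slip is cosmetic: when you pass from $(y-1)(y+1)^{p-1}\ge y^p-1$ to $1-x^p\le(1-x)(1+x)^{p-1}$ you say ``multiply by $y^p$'' when you mean multiply by $x^p=y^{-p}$; the resulting inequality is the right one, so nothing is affected. Both the convexity-in-$q$ argument for $G_x$ and the alternative monotonicity analysis of $\phi$ via the logarithmic derivative of $\chi$ check out line by line.

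As for comparison with the paper: the paper does not actually prove this lemma but defers entirely to \cite[Lemma~2.7]{Carvalho-Figueiredo-Maia-Medeiros} and \cite{hf1}, so there is no approach to compare against. Your write-up therefore supplies a fully self-contained proof that the paper omits. The reduction to the threshold $\beta=2^{p-1}-1$ followed by the substitution $\tau=\sqrt{s/(1-s)}$ is a clean way to turn the two-parameter inequality into the single elementary inequality $1-x^p\le(1-x)(1+x)^{p-1}$ for $p\ge2$, and your treatment of the latter (fixing $x$ and exploiting convexity in the exponent) makes transparent exactly where the hypothesis $p\ge2$ enters, namely through $G_x(2)=0$ and $G_x'(2)=k(x)\ge0$.
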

	
	\begin{proof}
	The proof follows by using \cite[Lemma 2.7]{Carvalho-Figueiredo-Maia-Medeiros}. On this subject we refer the reader also to \cite{hf1}. The details are omitted. 
	\end{proof}

		\begin{lemma}[Semi-trivial solution]
		For each $0\leq\beta<2^{p-1}-1$, the pair $(u_\beta,v_\beta)$ is a semi-trivial solution, that is, $u_\beta=0$ or $v_\beta=0$.
	\end{lemma}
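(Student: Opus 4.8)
The plan is to argue by contradiction, assuming that the ground state pair $(u_\beta,v_\beta)$ produced in Proposition~\ref{atingibilidade} is a vector solution, i.e. $u_\beta\neq 0$ and $v_\beta\neq 0$. Then the positivity assertion in Theorem~\ref{teo-Pohozaev} (see Lemma~\ref{lema-regularidade-C2}) gives $u_\beta>0$ and $v_\beta>0$ everywhere in $\mathbb{R}^2$. I would then introduce the scalar ``envelope'' $w:=\sqrt{u_\beta^2+v_\beta^2}$. Since $w^2=u_\beta^2+v_\beta^2$, one immediately gets $\|w\|_2^2=\|u_\beta\|_2^2+\|v_\beta\|_2^2$, $\|w\|_{\lambda,*}^2=\|u_\beta\|_{\lambda,*}^2+\|v_\beta\|_{\lambda,*}^2$ and $\mathcal{V}(w,0)=\mathcal{V}(u_\beta,v_\beta)$, while the Cauchy--Schwarz bound $w^2|\nabla w|^2=|u_\beta\nabla u_\beta+v_\beta\nabla v_\beta|^2\le (u_\beta^2+v_\beta^2)(|\nabla u_\beta|^2+|\nabla v_\beta|^2)$ yields $|\nabla w|^2\le|\nabla u_\beta|^2+|\nabla v_\beta|^2$ pointwise, hence $\|\nabla w\|_2^2\le\|\nabla u_\beta\|_2^2+\|\nabla v_\beta\|_2^2$ and, in particular, $w\in W^\lambda\setminus\{0\}$.

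The crucial step is the comparison of the nonlinear terms. Writing $a=u_\beta^2$, $b=v_\beta^2$ and, where $a+b>0$, $s=a/(a+b)$, one has the pointwise identity $u_\beta^{2p}+v_\beta^{2p}+2\beta|u_\beta v_\beta|^p=(a+b)^p h_\beta(s)$, with $h_\beta$ the function of Lemma~\ref{lema-funcao-h}. Since $u_\beta,v_\beta>0$, the argument $s$ lies in $(0,1)$ at every point, so Lemma~\ref{lema-funcao-h} gives $h_\beta(s)<1$ pointwise; integrating over $\mathbb{R}^2$ produces the strict inequality
$$\psi_\beta(u_\beta,v_\beta)<\int_{\mathbb{R}^2}\left(u_\beta^2+v_\beta^2\right)^p\dx=\|w\|_{2p}^{2p}=\psi_\beta(w,0).$$

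Now I would plug these relations into the fiber formula \eqref{funcional-utvt} applied both to $(w,0)$ and to $(u_\beta,v_\beta)$. Term by term: the $\log(t^{-1})$--term and the $\mathcal{V}$--term coincide, the gradient term for $(w,0)$ is $\le$ that for $(u_\beta,v_\beta)$, and the $\psi$--term is strictly smaller (its coefficient $-t^{4p-2}/2p$ is negative and $\psi_\beta(w,0)>\psi_\beta(u_\beta,v_\beta)$). Hence $I_\beta(t^2w_t,0)<I_\beta\big(t^2(u_\beta)_t,t^2(v_\beta)_t\big)$ for every $t>0$. By Lemma~\ref{projecao} there is $t_0>0$ with $(t_0^2w_{t_0},0)\in\mathcal{M}_\beta$, while Lemma~\ref{desigualdade-top-ln} together with Remark~\ref{funcao-positiva} gives $I_\beta\big(t^2(u_\beta)_t,t^2(v_\beta)_t\big)\le I_\beta(u_\beta,v_\beta)=c_\beta$ for all $t>0$. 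Chaining these estimates,
$$c_\beta\le I_\beta(t_0^2w_{t_0},0)<I_\beta\big(t_0^2(u_\beta)_{t_0},t_0^2(v_\beta)_{t_0}\big)\le c_\beta,$$
a contradiction. Consequently $u_\beta=0$ or $v_\beta=0$, i.e. $(u_\beta,v_\beta)$ is semi-trivial.

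The step I expect to be the main obstacle is securing the \emph{strict} inequality $\psi_\beta(w,0)>\psi_\beta(u_\beta,v_\beta)$ rather than just $\ge$: this is precisely where the positivity of both components (from the maximum-principle part of Theorem~\ref{teo-Pohozaev}) enters, guaranteeing $s(x)\in(0,1)$ on a set of full measure so that the strict inequality of Lemma~\ref{lema-funcao-h} propagates to the integrals, including the degenerate case $\beta=0$ in which the cross term $2\beta\int|u_\beta v_\beta|^p$ disappears but strictness still holds by strict convexity.
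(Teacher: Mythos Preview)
Your proof is correct and follows essentially the same approach as the paper. Your envelope $w=\sqrt{u_\beta^2+v_\beta^2}$ is exactly the paper's $\rho$ (the paper writes $(u,v)=(\rho\cos\theta,\rho\sin\theta)$ in polar form), your Cauchy--Schwarz estimate for $|\nabla w|$ is the same as the paper's identity $|\nabla u|^2+|\nabla v|^2=|\nabla\rho|^2+\rho^2|\nabla\theta|^2$, your variable $s=a/(a+b)$ is $\cos^2\theta$, and the closing contradiction $c_\beta\le I_\beta(t_0^2w_{t_0},0)<I_\beta(t_0^2(u_\beta)_{t_0},t_0^2(v_\beta)_{t_0})\le c_\beta$ is identical to the paper's, the only cosmetic difference being that you invoke Lemma~\ref{desigualdade-top-ln} while the paper cites the equivalent formulation \eqref{maximo}.
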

	
	\begin{proof}
		Denoting $(u_\beta,v_\beta)$ by $(u,v)$ (this pair was obtained in Proposition~\ref{atingibilidade}),
		assume by contradiction that $u\neq0$ and $v\neq0$. By Lemma~\ref{lema-regularidade-C2},  we have that $u,v>0$. Using polar coordinates for the pair $(u,v)$, we can write
		$$
		(u,v)=(\rho\cos\theta,\rho\sin\theta),\quad\mbox{where}\quad \rho^2=u^2+v^2 \quad \mbox{and} \quad \theta=\theta(x)\in(0,\pi/2).
		$$ 
	It is straightforward to check that 
	$$
	\nabla u=\nabla\rho\cos\theta-\rho\nabla\theta\sin\theta \quad \mbox{and} \quad \nabla v=\nabla\rho\sin\theta+\rho\nabla\theta\cos\theta.
	$$
	Consequently,
	$$
	\begin{aligned}
		|\nabla u|^2+|\nabla v|^2&=\left(|\nabla \rho|^2\cos^2\theta-2\rho\cos\theta\sin\theta\nabla\rho\nabla\theta+\rho^2|\nabla\theta|^2\sin^2\theta\right)\\
		&\quad+\left(|\nabla\rho|^2\sin^2\theta+2\rho\sin\theta\cos\theta\nabla\rho\nabla\theta+\rho^2|\nabla\theta|^2\cos^2\theta\right)\\
		&=|\nabla\rho|^2+\rho^2|\nabla\theta|^2.
	\end{aligned}
	$$
	Thus,
	$$
	\|\nabla u\|_2^2+\|\nabla v\|_2^2 \geq  \|\nabla\rho\|_2^2, \quad \left(\|u\|_2^2+\|v\|_2^2\right)^2 = \|\rho\|_2^4 \quad \mbox{and} \quad \mathcal{V}(u,v)= \mathcal{V}(\rho,0).
	$$
	Since $(\rho,0) \neq(0,0)$, from Lemma~\ref{projecao}, there exists $t_0>0$ such that $(t_0^2\rho_{t_0},0) \in \mathcal{M}_\beta$. By \eqref{funcional-utvt},
	$$
	I_\beta(t_0^2u_{t_0},t_0^2v_{t_0}) \geq \frac{t_0^4}{2}\|\nabla\rho\|_2^2 + \frac{t_0^4\log(t_0^{-1})}{4}\|\rho\|_2^4+\frac{t_0^4}{4} \mathcal{V}(\rho,0)-\frac{t_0^{4p-2}}{2p}\psi_\beta(u,v).
	$$
	
	Now, we estimate $\psi_\beta(u,v)$. Since $\theta\in(0,\pi/2)$, then $0<\cos^2\theta<1$. Thus, we can apply Lemma~\ref{lema-funcao-h} to obtain
	$$
	\begin{aligned}
		\psi_\beta(u,v) &= \|u\|_{2p}^{2p}+\|v\|_{2p}^{2p}+2\beta\int_{\mathbb{R}^2}|uv|^p\dx
		\\
		&=\int_{\mathbb{R}^2}\left[(\cos^2\theta)^p+(\sin^2\theta)^p+2\beta(\cos^2\theta)^{p/2}(\sin^2\theta)^{p/2}\right]|\rho|^{2p}\dx
		\\
        &<\|\rho\|_{2p}^{2p}=\psi_\beta(\rho,0).
	\end{aligned}
	$$
	This inequality combined with the last estimate imply that
	$$
	I_\beta(t_0^2u_{t_0},t_0^2v_{t_0}) > I_\beta(t_0^2\rho_{t_0},0)\geq c_\beta,
	$$
	where we used \eqref{funcional-utvt}. Using \eqref{maximo}, since $(u,v)\in\mathcal{M}_\beta$, it holds that
	$$
	c_\beta < I_\beta(t_0^2u_{t_0},t_0^2v_{t_0}) \leq \max_{t>0}I_\beta(t^2u_{t},t^2v_{t}) = I_\beta(u,v)=c_\beta,
	$$
	which is a contradiction and this concludes the proof.
	\end{proof} 

    \section{\textbf{Declarations}}

\textbf{Ethical Approval }

It is not applicable.

\textbf{Competing interests}

There are no competing interests.

\textbf{Authors' contributions}

All authors wrote and reviewed this paper.

\textbf{Availability of data and materials}

All the data can be accessed from this article.

\end{document}